%%%%%%%%%%%%%%%%%%%%%%%%%%%%%%%%%%%%%%%%%%%%%%%%%%%
%% Decomposable Subspaces,  Linear Sections of Grassmann Varieties, 
%%    and Higher Weights of Grassmann Codes
%% Sudhir Ghorpade, Arunkumar Patil  and Harish Pillai %%
%% Submitted for Publication: October 25, 2007 %%
%% Posted on the arXiv: October 26, 2007
%%%%%%%%%%%%%%%%%%%%%%%%%%%%%%%%%%%%%%%%%%%%%%%%%%%
\documentclass[10pt, a4paper]{amsart}
\usepackage{latexsym, amsfonts, euscript}

\def\Z{{\mathbb Z}}

\def\PP{{\mathbb P}}

\def\Fq{{\mathbb F}_q}
\def\D{{\mathcal D}}
\def\E{{\mathcal E}}
\def\hodge{{\mathfrak h}}

\def\supp{{\rm supp}}
\def\rank{{\rm rank}}

\def\a{{\alpha}}

\def\d{{\delta}}
\def\e{\mathbf{e}}
\def\bfu{\mathbf{u}}
\def\bfv{\mathbf{v}}

\newcommand\vspan{\mathop{\rm span}}
\newtheorem{theorem}{Theorem}
\newtheorem{proposition}[theorem]{Proposition}
\newtheorem{lemma}[theorem]{Lemma}
\newtheorem{corollary}[theorem]{Corollary}

\newtheorem*{theorem*}{Theorem}
\newtheorem*{conjecture*}{Conjecture}
\theoremstyle{remark}
\newtheorem{remark}[theorem]{Remark}

\begin{document}

\title{Decomposable Subspaces,  Linear Sections of Grassmann Varieties, 
and Higher Weights of Grassmann Codes} 

\author{Sudhir R. Ghorpade}
\address{Department of Mathematics, 
Indian Institute of Technology Bombay,\newline \indent
Powai, Mumbai 400076, India.}
\email{srg@math.iitb.ac.in}
%\urladdr{http://www.math.iitb.ac.in/$\sim$srg}

\author{Arunkumar R. Patil}
\address{Department of Mathematics,
Indian Institute of Technology Bombay,\newline \indent
Powai, Mumbai 400076, India \newline \indent
and \newline \indent
Shri Guru Gobind Shinghji Institute  of Engineering \& Technology,\newline \indent
Vishnupuri, Nanded 431 606, India}
\email{arun.iitb@gmail.com}

\author{Harish K. Pillai}
\address{Department of Electrical Engineering,
Indian Institute of Technology Bombay,\newline \indent
Powai, Mumbai 400076, India.}
\email{hp@ee.iitb.ac.in}

%\urladdr{http://www.math.iitb.ac.in/$\sim$srg}
\date{October 25, 2007}

\keywords{Exterior algebra, decomposable subspace, Grassmann variety, linear code, higher weight, Griesmer-Wei bound, Grassmann code}

\subjclass[2000]{15A75, 14M15, 94B05, 94B27}
 
\begin{abstract}
Given a homogeneous component of an exterior algebra, we characterize those subspaces in which every nonzero element is decomposable. In geometric terms, this corresponds to  characterizing the projective linear subvarieties of the Grassmann variety with its Pl\"ucker embedding. When the base field is finite, we consider the more general question of determining the maximum number of points on sections of Grassmannians by linear subvarieties of a fixed (co)dimension.  This corresponds to a known open problem of determining the complete weight hierarchy of 
linear error correcting codes associated to Grassmann varieties.  
We recover most of the known results as well as prove some new results. In the process we obtain, and utilize, a simple generalization of the Griesmer-Wei bound for arbitrary linear codes.  
\end{abstract}

\maketitle

\section{Introduction}
\label{sec:intro}

Let $V$ be an $m$-dimensional vector space over a field $F$. Given a positive integer $\ell$ with $\ell \le m$, consider the $\ell^{\rm th}$ exterior power $\bigwedge^{\ell}V$ of $V$. A nonzero element $\omega\in \bigwedge^{\ell}V$ is said to be {\em   decomposable} if $\omega = v_1\wedge \cdots \wedge v_{\ell}$ for some $v_1, \dots , v_{\ell}\in V$. A subspace of $\bigwedge^{\ell}V$ is {\em   decomposable} if all of its nonzero elements are decomposable. 
In the first part of this paper, we consider the following question: what are all possible   decomposable subspaces of $\bigwedge^{\ell}V$, and, in particular, what is the maximum possible dimension of a decomposable subspace of $\bigwedge^{\ell}V$? We answer this by proving a characterization of decomposable 
subspaces of $\bigwedge^{\ell}V$. This result can be viewed as an algebraic counterpart 
of the combinatorial structure theorem for the so called closed families of subsets of 
a finite set (cf. \cite[Thm. 4.2]{GL2}). As a corollary, we obtain that the maximum possible dimension of a   decomposable subspace of $\bigwedge^{\ell}V$ is $\max\{\ell,m-\ell\} + 1$. 
In geometric terms, this corresponds to characterizing the projective linear subvarieties (with respect to the Pl\"ucker embedding) of the Grassmann variety $G_{\ell,m}$ of all $\ell$-dimensional subspaces of $V$, and showing that the maximum possible (projective) dimension of such a linear subvariety is $\max\{\ell,m-\ell\}$. Briefly speaking, the characterization of decomposable subspaces states that they are necessarily one among the two types of subspaces that are described explicitly. Subsequently, using the Hodge star operator, we observe that a nice duality prevails among the two types of decomposable subspaces. 

In the second part of this paper, we consider the case when $F$ is the finite field $\Fq$ 
with $q$ elements. For a fixed nonnegative integer $s$, we consider the linear sections
$L\cap G_{\ell,m}$ of the Grassmann variety $G_{\ell,m}$ (with its canonical Pl\"ucker embedding) by a linear subvariety $L$ of $\PP(\bigwedge^{\ell}V)$ of dimension $s$, and we ask what is the maximum number of $\Fq$-rational points that such a linear section can have. In light of the abovementioned corollary of the characterization of decomposable subspaces, it is evident that when $s$ is small, or more precisely, when $s\le \max\{\ell,m-\ell\}$, the maximum number is $1+q+q^2+\cdots + q^s$. But for a general $s$, the answer does not seem to be known. In fact, enumerative as well as geometric aspects of linear sections of $G_{\ell,m}$ are not particularly well-understood, in general, except in special cases such as those when the linear sections 
are Schubert subvarieties of $G_{\ell,m}$. (See, for example, Section 6 of \cite{GL2} and the references therein.) However, the above question admits an equivalent formulation in terms of linear error correcting codes, and as such, it has been considered by various authors. Indeed, if we let $C(\ell,m)$ denote the linear code associated to $G_{\ell,m}(\Fq)\hookrightarrow \PP(\bigwedge^{\ell}V)$, 
then its $r^{\rm th}$ higher weight (see Section \ref{sec:Griesmer} for definitions) 
is given by
$$
d_r(C(\ell,m)) = n - \max_L |L\cap G_{\ell,m} (\Fq)|  
$$
where the maximum is taken over projective linear subspaces $L$ of $\PP(\bigwedge^{\ell}\Fq^m)$ of codimension $r$, and where $n$ denotes the Gaussian binomial coefficient defined by
$$
 n=|G_{\ell,m}(\Fq)|= {{m}\brack{\ell}}_q :=\frac{(q^m-1)(q^m-q)\cdots(q^m-q^{\ell-1})}{(q^{\ell}-1)(q^{\ell}-q)\cdots
(q^{\ell}-q^{\ell-1})}.
$$
With this in view, we shall now consider 
the equivalent question of determining 
$d_r = d_r(C(\ell,m))$ for any $r\ge 0$, where 
$d_0:=0$, by convention. This question is open, in general, and the known results can be summarized as follows. From general facts in Coding Theory and the fact that the embedding $G_{\ell,m}(\Fq) \hookrightarrow \PP(\bigwedge^{\ell}\Fq^m)$ is nondegenerate, one knows that 
$$
0= d_0 < d_1<d_2<\cdots<d_k=n \quad \text{where} \quad k := {{m}\choose{\ell}},
$$
and also that 
\begin{equation}
\label{GWforGrass}
d_r\left( C(\ell ,m)\right) \ge q^{\d} + q^{\d - 1} +\dots +   q^{\d - r +1} \quad \text{where} \quad \d := 
\ell(m-\ell).
\end{equation}
The latter is a consequence of the so called Griesmer-Wei bounds for linear codes and a result of Nogin \cite{N} which says that $d_1=q^{\d}$. In fact, 
Nogin \cite{N} showed that the Griesmer-Wei bound is sometimes attained, that is, 
\begin{equation}
\label{uptomu}
d_r\left( C(\ell ,m)\right) = q^{\d} + q^{\d - 1} +\dots +   q^{\d - r +1}
\quad {\rm for} \quad 0\le r \le \mu,
\end{equation} 
where
$$
\mu:= \max \{\ell, m - \ell \} + 1.
$$
Alternative proofs of Nogin's result for higher weights of $C(\ell,m)$ were given by Ghorpade and Lachuad \cite{GL} using the notion of a closed family. Recently, Hansen, Johnsen and Ranestad \cite{HJR} have observed that a dual result holds as well, namely, 
\begin{equation}
\label{dualweights}
d_{k-r}\left( C(\ell ,m)\right) = n -( 1+ q + \cdots + q^{r -1})
\quad {\rm for} \quad 0\le r \le \mu.
\end{equation}
In general, the values of $d_r(C(\ell,m))$ for $\mu < r < k -\mu$ are not known. For example,
if $\ell =2$ and we assume (without loss of generality) that $m\ge 4$, then $\mu = m-1$, and 
$d_r(C(\ell,m))$ for  $m \le r < {{m-1}\choose{2}}$ are not known, except that in the first
nontrivial case,  Hansen, Johnsen and Ranestad \cite{HJR} have
shown by clever algebraic-geometric arguments that
\begin{equation}
\label{d5C2m}
d_5(C(2,5)) =  q^{6} +q^{5} + 2q^4 + q^{3} = d_4 + q^4.
\end{equation}
Notice that the Griesmer-Wei bound in \eqref{GWforGrass} is not attained in this case. Nonetheless,  Hansen, Johnsen and Ranestad \cite{HJR} conjecture that 
the difference $d_r - d_{r-1}$ of consecutive higher weights of $C(\ell,m)$ is always 
a power of $q$. 

Our main results concerning the the determination of $d_r(C(\ell,m))$ are as follows. 
First, we recover \eqref{uptomu} and \eqref{dualweights} as an immediate corollary of 
our characterization of decomposable subspaces. Next, we further analyze the structure of   decomposable vectors in $\bigwedge^{2}V$ to extend \eqref{dualweights} by showing that
\begin{equation}
\label{dualnewdr}
d_{k-\mu-1}\left(C(2,m)\right) =
 n-(1+q+\cdots+q^{\mu-1}+q^2) = d_{k-\mu} - q^2
  \quad \mbox{for any } m \ge 4. 
\end{equation}
Finally, we use the abovementioned analysis of decomposable vectors in $\bigwedge^{2}V$ and also exploit the Hodge star duality to prove the following generalization of \eqref{d5C2m} for any $m\ge 4$. 
\begin{equation}
\label{newdr}
d_{\mu + 1}\left(C(2,m)\right) =  q^{\d} + q^{\d - 1}+ 2q^{\d - 2}+ q^{\d - 3} +\dots +   q^{\d - \mu+1} = d_\mu+q^{\delta-2}.
\end{equation}
In the course of deriving these formulae, we use a mild generalization of the Griesmer-Wei bound,  proved here in the general context of arbitrary linear codes, which may be of independent interest.

It is hoped that these results, and more so, the methods used in proving them, will pave the way for the solution of the problem of determination of the complete weight hierarchy of $C(\ell, m)$ at least in the case $\ell =2$.  
To this end, we provide, toward the end of this paper,  an initial tangible goal by stating  conjectural formulae for $d_r\left(C(2,m)\right)$ when $\mu +1 \le r \le 2\mu -3$, and also when $k-2\mu +3\le r \le k- \mu -1$. It may be noted that these conjectural formulae, and of course both \eqref{dualnewdr} and \eqref{newdr}, corroborate the conjecture of Hansen, Johnsen and Ranestad \cite{HJR} that the differences of consecutive higher weights of Grassmann codes is always a power of $q$.

\section{Decomposable Subspaces}
\label{sec:cd}

Let us fix, in this as well as the next section, positive integers $\ell, m$ with $\ell \le m$, a field $F$, and a vector space $V$  of dimension $m$ over $F$.  
Let 
$$
I(\ell,m):=\{ \a = (\a_1, \dots , \a_{\ell} )\in \Z^{\ell}  : 
1\le \a_1 < \dots < \a_\ell  \le m \} .
$$
If $\{v_1, \dots , v_m\}$ is a basis of $V$, then 
$\{v_{\a} : \a\in I(\ell,m)\}$ is a basis of $\bigwedge^{\ell}V$, where 
$v_{\a} := v_{\a_1}\wedge \cdots \wedge v_{\a_{\ell}}$.
Given any $\omega \in \bigwedge^{\ell}V$, define 
$$
V_{\omega} : = \{v\in V : v \wedge \omega = 0\}.
$$
Clearly, $V_{\omega}$ is a subspace of $V$. It is evident that $\omega =0$ if and only if $\dim V_{\omega} =m$. The following elementary characterization will be useful in the sequel. Here, and hereafter, it may be useful to keep in mind 
that for us, a   decomposable vector is necessarily nonzero.

\begin{lemma}
\label{elemlemma1}
Assume that $\ell < m$ and let $\omega\in \bigwedge^{\ell}V$. Then
$$
\omega \text{ is   decomposable } \Longleftrightarrow \dim V_{\omega} = \ell.
$$
Moreover, if $\dim V_{\omega} = \ell$ and $\{v_1, \dots , v_{\ell}\}$ is a basis of $V_{\omega} $, then $\omega = c(v_1\wedge \cdots \wedge v_{\ell})$ for some $c\in F$ with $c\ne 0$.
\end{lemma}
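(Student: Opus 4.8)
The plan is to prove both directions of the equivalence, and I'll aim to prove the "moreover" statement simultaneously, since it really carries the substance of the $(\Longleftarrow)$ direction.

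For the forward implication, suppose $\omega$ is decomposable, so $\omega = v_1 \wedge \cdots \wedge v_\ell$ with the $v_i$ necessarily linearly independent (otherwise $\omega = 0$, contradicting decomposability). I would first check that $v_i \in V_\omega$ for each $i$, which is immediate since $v_i \wedge \omega$ repeats a factor and hence vanishes; this gives $\dim V_\omega \ge \ell$. The work is in the reverse inequality $\dim V_\omega \le \ell$. To see this, I would extend $\{v_1, \dots, v_\ell\}$ to a basis $\{v_1, \dots, v_m\}$ of $V$ and take an arbitrary $v = \sum_{i=1}^m c_i v_i \in V_\omega$. Then $v \wedge \omega = \sum_{i=\ell+1}^m c_i\, (v_i \wedge v_1 \wedge \cdots \wedge v_\ell)$, and the vectors $v_i \wedge v_1 \wedge \cdots \wedge v_\ell$ for $i > \ell$ are, up to sign, distinct basis elements $v_\beta$ of $\bigwedge^{\ell+1} V$ and hence linearly independent. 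So $v \wedge \omega = 0$ forces $c_{\ell+1} = \cdots = c_m = 0$, i.e. $v \in \vspan\{v_1, \dots, v_\ell\}$. This shows $V_\omega = \vspan\{v_1,\dots,v_\ell\}$ has dimension exactly $\ell$.

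For the reverse implication together with the "moreover" clause, suppose $\dim V_\omega = \ell$ with basis $\{v_1, \dots, v_\ell\}$, and extend to a basis $\{v_1, \dots, v_m\}$ of $V$. Writing $\omega = \sum_{\a \in I(\ell,m)} c_\a v_\a$ in the induced basis of $\bigwedge^\ell V$, the condition $v_i \wedge \omega = 0$ for $i = 1, \dots, \ell$ constrains the support of $\omega$. Concretely, for a fixed $i \le \ell$, the product $v_i \wedge v_\a$ vanishes exactly when $i \in \{\a_1, \dots, \a_\ell\}$, and otherwise gives $\pm v_{\a \cup \{i\}}$; since these surviving terms are linearly independent basis elements of $\bigwedge^{\ell+1}V$, requiring $v_i \wedge \omega = 0$ forces $c_\a = 0$ whenever $i \notin \a$. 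Imposing this for every $i \le \ell$ simultaneously leaves only the single multi-index $\a = (1, 2, \dots, \ell)$, so $\omega = c\, (v_1 \wedge \cdots \wedge v_\ell)$ with $c = c_{(1,\dots,\ell)}$. Finally $c \ne 0$, for otherwise $\omega = 0$ would give $\dim V_\omega = m > \ell$ (using $\ell < m$), contradicting the hypothesis. This simultaneously shows $\omega$ is decomposable and establishes the explicit form.

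I expect the main obstacle to be organizing the bookkeeping in the reverse direction cleanly: tracking how the $\ell$ linear conditions $v_i \wedge \omega = 0$ intersect to pin the support down to the single index $(1, \dots, \ell)$. The essential point throughout is the linear independence of the relevant wedge products $v_i \wedge v_\a$ in $\bigwedge^{\ell+1}V$, which is where the hypothesis $\ell < m$ is genuinely used (so that $\bigwedge^{\ell+1}V \ne 0$); I would make sure to invoke this explicitly rather than letting it hide in the combinatorics.
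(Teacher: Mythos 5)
Your proof is correct and follows essentially the same route as the paper: the forward direction shows $V_\omega=\vspan\{v_1,\dots,v_\ell\}$, and the converse expands $\omega=\sum_{\a} c_\a v_\a$ in a basis extending one of $V_\omega$ and kills all coefficients except $c_{(1,\dots,\ell)}$ via linear independence of the wedge products in $\bigwedge^{\ell+1}V$. Worth noting: your index bookkeeping in the converse (wedging with $v_i$ for $1\le i\le \ell$) is the correct version of what the paper writes, since its stated range $\ell < i\le m$ is evidently a typo for $1\le i\le \ell$ --- only the $v_i$ with $i\le \ell$ lie in $V_\omega$, and imposing $v_i\wedge\omega=0$ for those $i$ is exactly what forces $\a=(1,\dots,\ell)$.
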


\begin{proof}
If $\omega$ is   decomposable, then $\omega = v_1\wedge \cdots \wedge v_{\ell}$ for some linearly independent elements $v_1, \dots , v_{\ell}\in V$. Clearly, $\{v_1, \dots , v_{\ell}\} \subseteq V_{\omega}$. Moreover, if $v\in V_{\omega}$, then $v, v_1, \dots , v_{\ell}$ are linearly dependent. It follows that $\{v_1, \dots , v_{\ell}\}$ is a basis of $V_{\omega} $. 
Conversely, let $\dim V_{\omega}=\ell$. Extend a basis  $\{v_1, \dots , v_{\ell}\}$ of $V_{\omega}$ to a basis $\{v_1, \dots , v_m\}$ of $V$. 
Write $\omega = \sum_{\alpha\in I(\ell,m)} c_{\a}\, v_{\a}$. 
Now $0= v_i \wedge \omega =\sum_{\alpha\in I(\ell,m)} c_{\a}\, (v_i\wedge v_{\a})$ for $\ell < i\le m$. Consequently, $c_{\a} =0$ if $i$ does not appear in $\a$.  It follows that $\omega = c_{(1,2,\dots \ell)}\left(v_1\wedge \cdots \wedge v_{\ell}\right)$, as desired. 
\end{proof}

\begin{corollary}
\label{corlem1}
If $\ell =1$ or $\ell = m-1$, then the space $\bigwedge^{\ell}V$ is decomposable, that is, every nonzero element of $\bigwedge^{\ell}V$ is   decomposable.
\end{corollary}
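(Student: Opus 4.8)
The plan is to apply Lemma~\ref{elemlemma1} directly in the two cases. For $\ell = 1$, the claim is immediate: every nonzero element of $\bigwedge^1 V = V$ is a single vector $v$, hence trivially decomposable by definition. Alternatively, one can invoke the lemma (with $\ell = 1 < m$, which holds since $m > 1$ when $\ell = 1 \le m$ and $\ell = m-1$ forces $m \ge 2$) by checking that for any nonzero $v \in V$, the subspace $V_v = \{w \in V : w \wedge v = 0\}$ consists exactly of the scalar multiples of $v$, so $\dim V_v = 1 = \ell$.

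\medskip

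For $\ell = m - 1$, I would take an arbitrary nonzero $\omega \in \bigwedge^{m-1} V$ and show $\dim V_\omega = m - 1$, after which Lemma~\ref{elemlemma1} finishes the job (note $\ell = m-1 < m$, so the lemma applies). The key computation is to bound $\dim V_\omega$ from below. Writing $\omega = \sum_{\a \in I(m-1,m)} c_\a v_\a$ in a chosen basis, the condition $v \wedge \omega = 0$ for $v = \sum_j x_j v_j$ becomes a system of linear equations in the coordinates $x_j$: since $v \wedge \omega \in \bigwedge^m V$ is one-dimensional (spanned by $v_1 \wedge \cdots \wedge v_m$), the map $v \mapsto v \wedge \omega$ is a single linear functional on $V$. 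Hence $V_\omega$ is the kernel of a linear map $V \to \bigwedge^m V \cong F$, which has rank at most $1$, so $\dim V_\omega \ge m - 1$.

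\medskip

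It then remains to rule out $\dim V_\omega = m$, which by the observation preceding the lemma would force $\omega = 0$, contrary to assumption; thus $\dim V_\omega = m - 1 = \ell$ exactly. The main (and essentially only) obstacle is verifying that the map $v \mapsto v \wedge \omega$ is genuinely nonzero when $\omega \ne 0$, equivalently that $V_\omega \ne V$; this is precisely the content of the remark that $\omega = 0 \iff \dim V_\omega = m$, already established in the excerpt. Applying the ``moreover'' part of Lemma~\ref{elemlemma1} then yields $\omega = c(v_1 \wedge \cdots \wedge v_{m-1})$ for a basis $\{v_1, \dots, v_{m-1}\}$ of $V_\omega$ and some nonzero $c \in F$, exhibiting $\omega$ as decomposable and completing the proof.
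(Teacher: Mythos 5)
Your proof is correct and follows essentially the same route as the paper: the $\ell=1$ case is immediate from the definition, and for $\ell=m-1$ both arguments identify $V_\omega$ as the kernel of the linear functional $v\mapsto v\wedge\omega$ into $\bigwedge^m V\cong F$, conclude $\dim V_\omega = m-1=\ell$ since this functional is nonzero for $\omega\ne 0$, and then invoke Lemma~\ref{elemlemma1}. Your extra step of justifying nonvanishing of the functional via the remark that $\omega=0\iff\dim V_\omega=m$ is exactly the point the paper leaves implicit.
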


\begin{proof}
The result is obvious when $\ell =1$. Suppose $\ell = m-1$.  
Now $\bigwedge^{m}V$ is canonically isomorphic to $F$, and for $0\ne \omega \in \bigwedge^{\ell}V$, the linear  map from $V$ to $F$ given by $v\mapsto v\wedge \omega$ is nonzero and hence surjective. Clearly, $V_{\omega} $ is the kernel of this linear map and so $\dim V_{\omega} =\dim V - 1 = \ell$. Thus, Lemma \ref{elemlemma1} applies. 
\end{proof}

\begin{lemma}
\label{structure_lemma1}
Let $\omega_1, \omega_2 \in \bigwedge^{\ell}V$ be   decomposable and 
linearly independent, and let $V_i = V_{\omega_i}$ for $i=1,2$. Then
$$
\omega_1+\omega_2 \text{ is   decomposable } \Longleftrightarrow 
\dim V_{1}\cap V_{2} = \ell - 1 \Longleftrightarrow 
\dim V_{1} + V_{2} = \ell + 1.
$$
\end{lemma}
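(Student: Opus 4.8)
The plan is to handle the second equivalence first, since it is purely a matter of dimension counting, and then to concentrate on the first equivalence, which carries the real content. Since $\omega_1,\omega_2$ are linearly independent, $\bigwedge^{\ell}V$ has dimension at least $2$, so $\ell<m$ and Lemma~\ref{elemlemma1} applies to each $\omega_i$, giving $\dim V_i=\ell$. The Grassmann dimension formula then reads $\dim(V_1\cap V_2)+\dim(V_1+V_2)=2\ell$, so $\dim(V_1\cap V_2)=\ell-1$ holds if and only if $\dim(V_1+V_2)=\ell+1$; this settles the second equivalence. Moreover, by the ``moreover'' clause of Lemma~\ref{elemlemma1}, a decomposable vector is determined up to a scalar by its associated subspace, so $V_1=V_2$ would force $\omega_1,\omega_2$ to be proportional; hence $V_1\ne V_2$ and $d:=\dim(V_1\cap V_2)\le \ell-1$.

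For the first equivalence I would fix an adapted basis. Choose a basis $x_1,\dots,x_d$ of $V_1\cap V_2$, extend it to a basis $x_1,\dots,x_d,y_1,\dots,y_{\ell-d}$ of $V_1$ and to a basis $x_1,\dots,x_d,z_1,\dots,z_{\ell-d}$ of $V_2$, and finally extend $x_i,y_j,z_j$ to a basis of $V$ by adjoining vectors $t_1,\dots,t_{m-2\ell+d}$. Writing $X=x_1\wedge\cdots\wedge x_d$, Lemma~\ref{elemlemma1} gives $\omega_1=c_1\,X\wedge y_1\wedge\cdots\wedge y_{\ell-d}$ and $\omega_2=c_2\,X\wedge z_1\wedge\cdots\wedge z_{\ell-d}$ with $c_1,c_2\ne 0$. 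The dichotomy is now between $d=\ell-1$ and $d\le\ell-2$. If $d=\ell-1$, then there is a single $y_1$ and a single $z_1$, and $\omega_1+\omega_2=X\wedge(c_1y_1+c_2z_1)$ is visibly decomposable (its associated subspace is spanned by the $\ell$ independent vectors $x_1,\dots,x_{\ell-1},c_1y_1+c_2z_1$); this gives the implication $\dim(V_1\cap V_2)=\ell-1\Rightarrow \omega_1+\omega_2$ decomposable.

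The crux is the converse, which I would prove in contrapositive form: if $d\le\ell-2$ then $\omega:=\omega_1+\omega_2$ is not decomposable. The inclusion $V_1\cap V_2\subseteq V_\omega$ is immediate, since $v\in V_1\cap V_2$ kills both $\omega_1$ and $\omega_2$. For the reverse inclusion I would take $v=\sum a_ix_i+\sum b_jy_j+\sum c_j'z_j+\sum e_kt_k$ and expand $v\wedge\omega=v\wedge\omega_1+v\wedge\omega_2$ in the monomial basis of $\bigwedge^{\ell+1}V$: the $x$- and $y$-components of $v$ annihilate $\omega_1$ and the $x$- and $z$-components annihilate $\omega_2$, so $v\wedge\omega$ is a combination of the monomials $z_j\wedge X\wedge(\text{all }y)$, $t_k\wedge X\wedge(\text{all }y)$, $y_j\wedge X\wedge(\text{all }z)$, and $t_k\wedge X\wedge(\text{all }z)$.

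The main obstacle, and the only place where the hypothesis $d\le\ell-2$ enters, is to check that these monomials are pairwise distinct: here each $\omega_i$ contributes all $\ell-d\ge 2$ of its ``private'' basis vectors, so a monomial built from all the $y$'s can never coincide with one built from all the $z$'s. Consequently no cancellation is possible, and $v\wedge\omega=0$ forces every $b_j$, $c_j'$, and $e_k$ to vanish, i.e. $v\in V_1\cap V_2$. Thus $V_\omega=V_1\cap V_2$ has dimension $d<\ell$, and Lemma~\ref{elemlemma1} shows $\omega$ is not decomposable. It is instructive to note that when $\ell-d=1$ the monomials $z_1\wedge X\wedge y_1$ and $y_1\wedge X\wedge z_1$ coincide up to sign, and this is precisely the collision that permits cancellation and renders $\omega$ decomposable, consistent with the first case.
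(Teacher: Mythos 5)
Your proof is correct, and while the easy direction matches the paper, your argument for the hard direction is genuinely different. Both proofs get the second equivalence from the dimension formula $\dim(V_1\cap V_2)+\dim(V_1+V_2)=2\ell$, and both prove $\dim V_1\cap V_2=\ell-1\Rightarrow\omega_1+\omega_2$ decomposable by the same adapted-basis construction $\omega_1+\omega_2=X\wedge(c_1y_1+c_2z_1)$. For the converse, however, the paper argues directly and coordinate-free: assuming $\omega_1+\omega_2$ decomposable, it picks $z\in V_{\omega_1+\omega_2}\setminus(V_1\cap V_2)$, observes that $z\wedge\omega_1=-z\wedge\omega_2\neq 0$ is a decomposable $(\ell+1)$-vector, and applies Lemma~\ref{elemlemma1} in degree $\ell+1$ to get $V_1+Fz=V_{z\wedge\omega_1}=V_{z\wedge\omega_2}=V_2+Fz$, whence $\dim(V_1+V_2)\le\ell+1$. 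You instead prove the contrapositive by an explicit monomial expansion: when $d:=\dim V_1\cap V_2\le\ell-2$, you expand $v\wedge(\omega_1+\omega_2)$ in the basis of $\bigwedge^{\ell+1}V$ induced by the adapted basis $\{x_i,y_j,z_j,t_k\}$ of $V$ and check that the four families of monomials that appear are pairwise distinct (the only potential collision, between $z_j\wedge X\wedge y_1\wedge\cdots\wedge y_{\ell-d}$ and $y_{j'}\wedge X\wedge z_1\wedge\cdots\wedge z_{\ell-d}$, requires $\ell-d=1$), so that $v\wedge(\omega_1+\omega_2)=0$ forces $v\in V_1\cap V_2$; then $V_{\omega_1+\omega_2}=V_1\cap V_2$ has dimension $<\ell$ and Lemma~\ref{elemlemma1} rules out decomposability. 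Your route costs some index bookkeeping but buys strictly more information: it identifies $V_{\omega_1+\omega_2}$ exactly in the non-decomposable case and makes transparent that $\ell-d=1$ is precisely the threshold where cancellation becomes possible. The paper's route is shorter and coordinate-free, leveraging the degree-$(\ell+1)$ instance of Lemma~\ref{elemlemma1} instead of enumerating monomial collisions.
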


\begin{proof} 
Assume that $\dim V_{1}\cap V_{2} = \ell - 1$.
Let $\{f_1,\dots,f_{\ell-1}\}$ be a basis for $V_{1} \cap V_{2}$. 
Extend it to bases $\{f_1,\dots,f_{\ell-1},g_1\}$
and $\{f_1,\dots,f_{\ell-1},g_2 \}$ of $V_1$ and $V_2$, respectively. 
By Lemma \ref{elemlemma1}, there are 
$c_1,c_2\in F$ such that $\omega_i = c_i(f_1\wedge f_2\wedge \cdots\wedge f_{\ell-1}\wedge g_i)$ for $i=1,2$. Now $\omega_1+\omega_2\ne 0$ since $\omega_1,\omega_2$ are linearly independent, and $\omega_1+\omega_2=f_1\wedge f_2\wedge \cdots\wedge f_{\ell-1}\wedge (c_1g_1+c_2g_2)$. Thus $\omega_1+\omega_2$ is   decomposable. 

Conversely, suppose $\omega_1+\omega_2$ is   decomposable. 
Let $W= V_{\omega_1+\omega_2}$. It is clear that $V_1\cap V_2\subseteq W$. 
Also, by Lemma \ref{elemlemma1}, $\dim W = \ell = \dim V_1=\dim V_2$.  
Hence if $V_1\cap V_2=W$, then $V_1\cap V_2= V_1 = V_2$, which contradicts the linear independence of $\omega_1$ and $\omega_2$. Thus, $\dim V_1\cap V_2 \le \ell -1$, or equivalently, $\dim V_1+V_2 \ge \ell +1$. Moreover, we can find $z\in W\setminus (V_1\cap V_2)$. Note that since $z\wedge (\omega_1+\omega_2)=0$, we have: 
$z\wedge \omega_1 = 0 \Leftrightarrow z\wedge \omega_2 =0$. Hence $z\not\in V_1\cup V_2$ and $V_i+Fz$ has dimension $\ell+1$ for $i=1,2$. Further, since $z\wedge \omega_1 = - z\wedge \omega_2$, in view of Lemma \ref{elemlemma1} we see that $V_1+Fz = V_{z\wedge \omega_1} = V_{z\wedge \omega_2} = V_2+Fz$. 
Consequently, $V_1+V_2 \subseteq V_1+Fz=V_2+Fz$ and $\dim V_1+V_2 \le \ell +1$. This proves that $\dim V_1+V_2 = \ell +1$ or equivalently, 
$\dim V_1\cap V_2 = \ell -1$. This proves the desired equivalence.
\end{proof}

\begin{corollary}
\label{lemma_ab}
Let  $v_1,v_2,v_3,v_4 \in V$ and suppose 
$\omega:= (v_1\wedge v_2)+(v_3 \wedge v_4) \in \bigwedge^2V$ is nonzero. 
Then $\omega$  is   decomposable if and only if $\{v_1,v_2,v_3,v_4\}$  is 
linearly dependent.
\end{corollary}

\begin{proof} 
When $v_1\wedge v_2$ and $v_3 \wedge v_4$ are linearly independent, the result follows 
from Lemma \ref{structure_lemma1}. The case when  $v_1\wedge v_2$ and $v_3 \wedge v_4$ are linearly dependent is easy. 
\end{proof}

Given a subspace $E$ of $\bigwedge^{\ell}V$, let us define
$$
V_E :=\bigcap_{\omega\in E} V_{\omega} \quad \mbox{ and } \quad 
V^E:= \sum_{0\ne \omega\in E} V_{\omega} .
$$ 
Now, let $r=\dim E$. We say that the subspace $E$ is {\em close of type I\,} if there are $\ell + r-1$ 
linearly independent elements $f_1, \dots, f_{\ell-1}, g_1, \dots , g_r$ 
in $V$ such that 
$$
E=\vspan\{f_1\wedge \dots \wedge f_{\ell-1}\wedge g_i : i=1, \dots, r\}.
$$ 
And we say that $E$ is {\em close of type II\,} if there are $\ell + 1$ 
linearly independent elements $u_1, \dots, u_{\ell-r+1}, g_1, \dots , g_r$ 
in $V$ such that 
$$
E=\vspan\{u_1\wedge \dots \wedge u_{\ell-r+1}\wedge g_1 \cdots 
\wedge \check{ g_i} \wedge  \dots \wedge g_r  : i=1, \dots, r\},
$$
where $\check{ g_i}$ indicates that $g_i$ is deleted. We 
say that $E$ is a {\em close subspace} of $\bigwedge^{\ell}V$ if 
$E$ is close of type I or close of type II. 

Evidently, every one-dimensional subspace of $\bigwedge^{\ell}V$ is close of type I as well as of type II, whereas for two-dimensional subspaces, the 
notions of close subspaces of type I and type II are identical. A corollary of the following lemma is that in dimensions three or more, 
the two notions are distinct and mutually disjoint.

\begin{lemma}
\label{lem:VsubEandsupE}
Let $E$ be  a close subspace of $\bigwedge^{\ell}V$ of dimension $r$. Then
$E$ is   decomposable. Moreover, if  $\{\omega_1, \dots , \omega_r\}$ is a basis of $E$, then $V_E= V_{\omega_1} \cap \cdots \cap V_{\omega_r}$ and $V^E= V_{\omega_1}+\cdots + V_{\omega_r}$. Further, assuming that $r>1$, we have  $\dim V_E = \ell -1$ and $\dim V^E =  \ell+r-1$ if $E$ is close of type I, whereas $\dim V_E = \ell -r+ 1$ and $\dim V^E =  \ell+1$ if $E$ is close of type II.
\end{lemma}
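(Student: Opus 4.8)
The plan is to reduce everything to an explicit description of $V_\omega$ for an \emph{arbitrary} nonzero $\omega\in E$, since both the intersection/sum identities and the dimension counts follow once these are known. In each case I would first write down a general element of $E$, verify it is decomposable, and then read off $V_\omega$ via Lemma \ref{elemlemma1} (which gives $V_\omega$ as the span of the $\ell$ vectors occurring in a decomposable representation). The two identities $V_E=V_{\omega_1}\cap\cdots\cap V_{\omega_r}$ and $V^E=V_{\omega_1}+\cdots+V_{\omega_r}$ are then handled uniformly: the inclusion $V_E\subseteq\bigcap_i V_{\omega_i}$ is trivial and the reverse is immediate by linearity, because if $v\wedge\omega_i=0$ for all $i$ and $\omega=\sum_i a_i\omega_i$, then $v\wedge\omega=\sum_i a_i(v\wedge\omega_i)=0$; dually, $V^E\supseteq\sum_i V_{\omega_i}$ is trivial, and the reverse inclusion is exactly the statement that every $V_\omega$ (for $0\ne\omega\in E$) lies in $\sum_i V_{\omega_i}$, which the explicit descriptions will confirm.

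For the type I case this is routine. A general element is $\omega=f_1\wedge\cdots\wedge f_{\ell-1}\wedge g$ with $g=\sum_{i=1}^r c_ig_i$; it is nonzero iff some $c_i\ne 0$, in which case $f_1,\dots,f_{\ell-1},g$ are linearly independent, so $\omega$ is decomposable and $V_\omega=\vspan\{f_1,\dots,f_{\ell-1},g\}$ by Lemma \ref{elemlemma1}. In particular $V_{\omega_i}=\vspan\{f_1,\dots,f_{\ell-1},g_i\}$, so (for $r>1$) the intersection of the $V_{\omega_i}$ collapses to $\vspan\{f_1,\dots,f_{\ell-1}\}$, of dimension $\ell-1$, while their sum is $\vspan\{f_1,\dots,f_{\ell-1},g_1,\dots,g_r\}$, of dimension $\ell+r-1$; and every $V_\omega$ visibly sits inside this sum.

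The type II case is the one genuine obstacle, and the key idea is to peel off the common factor and invoke Corollary \ref{corlem1}. Writing $U=\vspan\{u_1,\dots,u_{\ell-r+1}\}$ and $W=\vspan\{g_1,\dots,g_r\}$, a general element of $E$ factors as $\omega=u_1\wedge\cdots\wedge u_{\ell-r+1}\wedge\eta$ with $\eta\in\bigwedge^{r-1}W$. Since $\dim W=r$ and $r-1=\dim W-1$, Corollary \ref{corlem1} tells us $\eta$ is decomposable whenever it is nonzero, say $\eta=h_1\wedge\cdots\wedge h_{r-1}$ with $h_1,\dots,h_{r-1}\in W$ linearly independent; as $U$ and $W$ are independent (their combined dimension is $\ell+1$), the $\ell$ vectors $u_1,\dots,u_{\ell-r+1},h_1,\dots,h_{r-1}$ are independent, so $\omega$ is decomposable with $V_\omega=U\oplus\vspan\{h_1,\dots,h_{r-1}\}\subseteq U\oplus W$. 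This is the step I expect to require the most care, namely checking that the inner contraction-type sum really is a nonzero decomposable element of $\bigwedge^{r-1}W$ and that the resulting $\ell$-fold wedge is nonzero.

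Finally I would record $V_{\omega_i}=U\oplus W_i$, where $W_i=\vspan\{g_1,\dots,\check{g_i},\dots,g_r\}$ is the coordinate hyperplane of $W$ omitting $g_i$. Decomposing any $v\in\bigcap_i V_{\omega_i}$ as $v=u+w$ with $u\in U$, $w\in W$ shows $w\in\bigcap_i W_i=\{0\}$, so the intersection is $U$, of dimension $\ell-r+1$; and $\sum_i V_{\omega_i}=U+\sum_i W_i=U\oplus W$, of dimension $\ell+1$, which by the previous paragraph contains every $V_\omega$. Combining these computations with the two identities established at the outset yields $\dim V_E=\ell-r+1$ and $\dim V^E=\ell+1$ in the type II case, completing the proof.
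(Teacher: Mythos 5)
Your proof is correct and its skeleton matches the paper's: both obtain decomposability by identifying $E$ with $\bigwedge^{1}W$ (type I) or $\bigwedge^{r-1}W$ (type II), where $W=\vspan\{g_1,\dots,g_r\}$, and invoking Corollary \ref{corlem1}, and both get the dimension formulas by reading off the $V_{\omega_i}$ from Lemma \ref{elemlemma1}. The one genuine difference is how $V^E=V_{\omega_1}+\cdots+V_{\omega_r}$ is established: the paper deduces it by induction on $r$ from the inclusion $V_{\omega+\omega'}\subseteq V_{\omega}+V_{\omega'}$ (a consequence of Lemmas \ref{elemlemma1} and \ref{structure_lemma1}), an argument that is valid for an \emph{arbitrary} decomposable subspace, whereas you compute $V_\omega$ explicitly for every nonzero $\omega\in E$ --- via $\omega=f_1\wedge\cdots\wedge f_{\ell-1}\wedge g$ in type I and $\omega=u_1\wedge\cdots\wedge u_{\ell-r+1}\wedge h_1\wedge\cdots\wedge h_{r-1}$ in type II --- and observe directly that it lies in $\sum_i V_{\omega_i}$. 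Your route is more self-contained, since a single explicit computation yields decomposability, both identities, and the dimension counts simultaneously, at the cost of being special to close subspaces; the paper's route for $V^E$ buys generality, as it never uses the close form. The two points requiring care in your type II computation --- that $U\cap W=\{0\}$ forces the $\ell$-fold wedge $u_1\wedge\cdots\wedge u_{\ell-r+1}\wedge h_1\wedge\cdots\wedge h_{r-1}$ to be nonzero, and that $\bigcap_{i}W_i=\{0\}$ --- are exactly the ones you flag, and both check out.
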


\begin{proof}  
Since $E$ is close of dimension $r$,  there is a $r$-dimensional subspace $G$ of $V$ [in fact, $G=\vspan\{g_1, \dots , g_r\}$, in the above notation] such that $E$ is naturally isomorphic to  $\bigwedge^1G$ or to $\bigwedge^{r-1}G$ according as $E$ is close of type I or of type II. Thus, in view of Corollary \ref{corlem1}, we see that  $E$ is   decomposable. Next, suppose
$\{\omega_1, \dots , \omega_r\}$ is a basis of $E$. Then obviously, 
$V_E= V_{\omega_1} \cap \cdots \cap V_{\omega_r}$. Moreover, in view of  Lemmas \ref{elemlemma1} and \ref{structure_lemma1}, we see that 
$V_{\omega + \omega'}\subseteq V_{\omega} + V_{\omega'}$ for all nonzero $\omega, \omega'\in E$ such that 
$\omega + \omega'\ne 0$. Hence, by induction on $r$, we obtain $V^E= V_{\omega_1}+\cdots + V_{\omega_r}$. Finally, suppose $r>1$. In case $E$ is close of type I, and $f_1, \dots, f_{\ell-1}, g_1, \dots , g_r$ are linearly independent elements of $V$ as in the definition above, then 
in view of Lemma \ref{elemlemma1}, we see that 
$V_E= \cap_{i=1}^r \vspan\{f_1, \dots, f_{\ell-1}, g_i\} = \vspan\{f_1, \dots, f_{\ell-1}\}$ and $V^E = \sum_{i=1}^r \vspan\{f_1, \dots, f_{\ell-1}, g_i\} = \vspan\{f_1, \dots, f_{\ell-1},g_1, \dots, g_r\}$. On the other hand, 
if $E$ is close of type II, and $u_1, \dots, u_{\ell-r+1}, g_1, \dots , g_r$ are linearly independent elements of $V$ as in the definition above, then 
as before, in view of Lemma \ref{elemlemma1}, we see that 
$V_E= \vspan\{u_1, \dots, u_{\ell-r+1}\}$ and $V^E = \vspan\{u_1, \dots, u_{\ell-r+1}, g_1, \dots, g_r\}$. This proves the desired assertions about 
$\dim V_E$ and $\dim V^E$.
\end{proof} 

The following result may be compared with  \cite[Thm. 4.2]{GL2}. Also, the proof is structurally analogous to that of  \cite[Thm. 4.2]{GL2}, except that the arguments here are a little more subtle. 

\begin{theorem}[Structure Theorem for Decomposable Subspaces]
\label{structure}
A subspace of $\bigwedge^{\ell}V$ is decomposable if and only if it is close. 
\end{theorem}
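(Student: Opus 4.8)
The plan is to prove the nontrivial direction — that every decomposable subspace $E$ is close — by induction on $r=\dim E$, since the converse implication is already contained in Lemma \ref{lem:VsubEandsupE}. The engine of the argument is the observation that if $\omega,\omega'\in E$ are linearly independent, then $\omega+\omega'$ is again decomposable (because $E$ is decomposable), so Lemma \ref{structure_lemma1} gives $\dim(V_\omega\cap V_{\omega'})=\ell-1$, equivalently $\dim(V_\omega+V_{\omega'})=\ell+1$. Thus, writing $V_i=V_{\omega_i}$ for a basis $\omega_1,\dots,\omega_r$ of $E$, the family $\{V_1,\dots,V_r\}$ consists of distinct $\ell$-dimensional subspaces of $V$ that pairwise meet in dimension $\ell-1$. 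The cases $r=1$ (trivially close of both types) and $r=2$ (where the explicit form produced in the proof of Lemma \ref{structure_lemma1} is exactly the $r=2$ close form) serve as the base of the induction.

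The geometric heart of the proof is a dichotomy for three such subspaces, which I would establish first: if $A_1,A_2,A_3$ are $\ell$-dimensional and pairwise meet in dimension $\ell-1$, then either they share a common $(\ell-1)$-dimensional subspace, or they lie in a common $(\ell+1)$-dimensional subspace. This is a dimension count: if $A_1\cap A_2$ and $A_1\cap A_3$ coincide, this common hyperplane of $A_1$ lies in all three and the first alternative holds; otherwise $A_1\cap A_2$ and $A_1\cap A_3$ are distinct hyperplanes of $A_1$, so $A_1\cap A_2\cap A_3$ has dimension $\ell-2$, whence $(A_3\cap A_1)+(A_3\cap A_2)$ has dimension $(\ell-1)+(\ell-1)-(\ell-2)=\ell$ and therefore equals $A_3$, forcing $A_3\subseteq A_1+A_2$.

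For the inductive step with $r\ge 3$, I would apply the induction hypothesis to $E'=\vspan\{\omega_1,\dots,\omega_{r-1}\}$, which is decomposable of dimension $r-1$ and hence close. If $E'$ is close of type I, then by Lemma \ref{lem:VsubEandsupE} the subspaces $V_1,\dots,V_{r-1}$ share a common $(\ell-1)$-dimensional space $W=V_{E'}$ and one writes $\omega_i=f_1\wedge\cdots\wedge f_{\ell-1}\wedge g_i$. If $W\subseteq V_r$, then $\omega_r=f_1\wedge\cdots\wedge f_{\ell-1}\wedge g_r$ by Lemma \ref{elemlemma1}, and the linear independence of $\omega_1,\dots,\omega_r$ forces $g_r$ to be independent of $\{f_1,\dots,f_{\ell-1},g_1,\dots,g_{r-1}\}$, yielding type I in dimension $r$. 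If instead $W\not\subseteq V_r$, I would apply the triple dichotomy to $V_1,V_i,V_r$: the shared-hyperplane alternative would force $W\subseteq V_r$, so each such triple lies in the $(\ell+1)$-space $V_1+V_i$; comparing these for two different indices $i$ — possible precisely when $r\ge 4$ — pins $V_r$ inside $(V_1+V_2)\cap(V_1+V_3)=V_1$, contradicting $V_r\ne V_1$. Hence for $r\ge 4$ only the first possibility survives, while for $r=3$ the second places $V_1,V_2,V_3$ in a common $(\ell+1)$-space. A symmetric analysis handles the case where $E'$ is of type II: there $V_1,\dots,V_{r-1}\subseteq U'=V^{E'}$ with $\dim U'=\ell+1$; if $V_r\subseteq U'$ all the $V_i$ lie in $U'$, and otherwise a count shows $V_r\cap V_i$ is a common $(\ell-1)$-space contained in $V_{E'}$, which is too large unless $r=3$. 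In every surviving configuration all the $V_i$ are contained in a single $(\ell+1)$-dimensional space $U$; then $E\subseteq\bigwedge^\ell U$, every nonzero element of which is decomposable by Corollary \ref{corlem1}, and the Hodge-type duality $\bigwedge^\ell U\cong U^*$ identifies $E$ with the space of functionals vanishing on $V_E=\bigcap_{\omega\in E}V_\omega$. Choosing a basis $u_1,\dots,u_{\ell-r+1}$ of this $(\ell-r+1)$-dimensional space and extending it then exhibits $E$ in the type II form.

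I expect the main obstacle to be the bookkeeping in the inductive step that prevents ``type mixing''. The delicate point is that a family of $\ell$-spaces meeting pairwise in dimension $\ell-1$ need \emph{not} be close — for instance three members of a pencil together with one stray hyperplane — so the pairwise condition alone is insufficient and one must feed back the full decomposability of $E$ through the linear independence of the basis $\omega_1,\dots,\omega_r$. Arranging the triple-dichotomy comparisons so as to rule out every crossover for $r\ge 4$, while correctly allowing the genuine branch at $r=3$ where types I and II first diverge, is exactly where the argument is ``a little more subtle'' than its combinatorial analogue in \cite{GL2}.
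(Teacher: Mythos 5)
Your proof is correct, and although it shares the paper's overall skeleton (induction on $r=\dim E$, powered by Lemma \ref{structure_lemma1} and the inductive closeness of a codimension-one subspace), its internal machinery is genuinely different from the paper's. The paper treats $r=3$ as a separate, fairly elaborate base case, splitting on whether $\dim(V_1\cap V_2\cap V_3)$ equals $\ell-1$ or $\ell-2$; then for $r>3$ it considers \emph{all} the deleted subspaces $E_i=\vspan\{\omega_j : j\ne i\}$ and argues by cases ``some $E_i$ is of type I'' versus ``all $E_i$ are of type II'', deriving contradictions respectively by exhibiting too many independent vectors inside the $\ell$-dimensional space $V_i$, and by using an auxiliary triple $E^*=\vspan\{\omega_1,\omega_2,\omega_r\}$ to force $V_r\subseteq V_1+V_2$ and hence $\omega_r\in\vspan\{\omega_1,\dots,\omega_{r-1}\}$. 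You instead isolate a standalone ``triple dichotomy'' lemma (three $\ell$-spaces pairwise meeting in dimension $\ell-1$ either share a common hyperplane or lie in a common $(\ell+1)$-space), work with the single subspace $E'=\vspan\{\omega_1,\dots,\omega_{r-1}\}$, and fold $r=3$ into the general step; your $r\ge 4$ contradictions, namely $V_r\subseteq(V_1+V_2)\cap(V_1+V_3)=V_1$ in the type I branch and $\ell-1\le\dim V_{E'}=\ell-r+2$ in the type II branch, are shorter than the paper's. Moreover, where the paper constructs the type II basis by hand, you identify $E$ inside $\bigwedge^{\ell}U\cong U^*$ (for $U$ the common $(\ell+1)$-space) as the annihilator of $V_E$, which packages the type II recognition into one duality statement; this is legitimate here since it only uses the nondegenerate pairing $\bigwedge^{\ell}U\times U\to\bigwedge^{\ell+1}U$, not the later Hodge machinery. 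Your route buys uniformity and economy; the paper's buys complete explicitness using only the elementary lemmas already established. Two small points to tighten in a full write-up: the equality $(V_1+V_2)\cap(V_1+V_3)=V_1$ needs the observation that $\dim(V_1+V_2+V_3)=\ell+2$, which follows from the linear independence of $f_1,\dots,f_{\ell-1},g_1,g_2,g_3$ supplied by the type I structure of $E'$; and your sentence ``in every surviving configuration all the $V_i$ are contained in a single $(\ell+1)$-dimensional space'' is not literally true of the $r=3$, type II, $V_3\not\subseteq U'$ subcase (there the count yields $V_1\cap V_2\subseteq V_3$, i.e., type I), but since a two-dimensional $E'$ is close of both types, that subcase is already covered by your type I branch, so this is a phrasing issue rather than a gap.
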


\begin{proof}
Lemma \ref{lem:VsubEandsupE} proves that a close subspace of $\bigwedge^{\ell}V$ is   decomposable. To prove the converse, let $E$ be a   decomposable subspace of $\bigwedge^{\ell}V$. We induct on $r:=\dim E$. 
The case $r=1$ is trivial, whereas if $r=2$, then the desired result follows from Lemmas \ref{elemlemma1} and \ref{structure_lemma1}. Now, suppose $r=3$. 
Let $\{\omega_1, \omega_2, \omega_3\}$ be a basis of $E$, and let $V_i=V_{\omega_i}$ for $i=1,2,3$.  
Then $\dim V_i = \ell$ and $\dim V_i\cap V_j = \ell -1$ for $1\le i, j\le 3$ with $i\ne j$, thanks to Lemmas \ref{elemlemma1} and \ref{structure_lemma1}. Thus, if we let $W=V_1\cap V_2\cap V_3$,
then 
$$
\ell -2 = \dim V_1\cap V_2 + \dim V_1\cap V_3 -\dim V_1 \le \dim W \le \dim V_1\cap V_2  = \ell -1. 
$$ 
If $\dim W = \ell -1$, then we can find $\ell +2$ elements $f_1, \dots, f_{\ell -1}, g_1, g_2, g_3$ in $V$ such that $\{f_1, \dots, f_{\ell -1}\}$ is a basis of $W$ and 
$\{f_1, \dots, f_{\ell -1}, g_i\}$ is a basis of $V_i$ for $i=1,2,3$. 
We may assume without loss of generality that 
$\omega_i = f_1\wedge \dots \wedge f_{\ell -1}\wedge g_i$ for $i=1,2,3$, thanks to Lemma \ref{elemlemma1}. Since $\omega_1, \omega_2, \omega_3$ are linearly independent, it follows that $g_i\not\in \sum_{j\ne i} V_j$ for $i=1,2,3$. Consequently, 
$f_1, \dots, f_{\ell -1}, g_1, g_2, g_3$ are linearly independent elements of $V$ and $E$ is close of type~I. On the other hand, if $\dim W = \ell -2$, 
then we can find $\ell +1$ elements $u_1, \dots, u_{\ell -2}, g_1, g_2, g_3$ in $V$ such that $\{u_1, \dots, u_{\ell -2}\}$ is a basis of $W$, and 
$\{u_1, \dots, u_{\ell -2}, g_i\}$ is a basis of $\cap_{j\ne i} V_j$, and moreover, $g_i\not\in V_i$ for $i=1,2,3$. Consequently, $u_1, \dots, u_{\ell -2}, g_1, g_2, g_3$ are linearly independent elements of $V$ [indeed, the vanishing of a linear combination of $u_1, \dots, u_{\ell -2}, g_1, g_2, g_3$ in which the coefficient of $g_i$ is nonzero implies that $g_i$ is in $V_i$]. Hence in view of Lemma \ref{elemlemma1}, we see that for $i=1,2,3$, the set  
$\{u_1, \dots, u_{\ell -2}, g_1, g_2, g_3\}\setminus \{g_i\}$ is a basis of
$V_i$ and $\omega_i = c_i\left( u_1\wedge \dots \wedge u_{\ell-2}\wedge g_{i_1} \wedge g_{i_2}  \right)$ for some $c_i\in F\setminus \{0 \}$, where $1\le i_1<i_2\le 3$ with $i_1\ne i \ne i_2$. 
It follows that $E$ is close of type II.

Finally, we assume that $r>3$ and that every   decomposable subspace of dimension $<r$ is close of type I or of type II. Let $\{\omega_1, \dots , \omega_r\}$ be a basis of $E$, and let $V_i=V_{\omega_i}$ 
and $E_i =\vspan\{\omega_1, \dots ,\omega_{i-1}, \omega_{i+1}, \dots,  \omega_r\}$ for $i=1,\dots , r$. 
Each $E_i$ is   decomposable and by the induction hypothesis, we are in one of the following two cases. 
\smallskip

{\em Case 1:} $E_i$ is close of type I for some $i\in\{1,\dots ,r\}$. 
\smallskip

Fix $i\in\{1,\dots ,r\}$ such that $E_i$ is close of type I, and let $W_i:=V_{E_i} = \cap_{j\ne i}V_j$. Then $\dim W_i = \ell -1$ and since $V_E = W_i\cap V_i$, by picking any $j\in\{1,\dots ,r\}$ with $j\ne i$, and using Lemma \ref{structure_lemma1}, we find 
$$
\ell -2 
= \dim W_i + \dim V_i -\dim (V_j + V_i) \le \dim V_E \le \dim W_i = \ell -1, 
$$
If $\dim V_E = \ell -1$, then it is readily seen that $E$ is close of type I. 
Suppose, if possible, $\dim V_E = \ell -2$. Let $\{f_1, \dots, f_{\ell-2}\}$ be a basis of $V_{E}$ and $f_{\ell-1}$ be any element of $W_i\setminus V_E$. Then $\{f_1, \dots, f_{\ell-1}\}$ is a basis of $W_i$ and 
$f_{\ell-1}\not\in V_i$. Since $\dim V_j\cap V_i = \ell -1$ for $j\ne i$, we can find $g_1, \dots , g_{i-1}, g_{i+1}, \dots , g_{r}\in V_i$ such that $\{f_1, \dots, f_{\ell-2}, g_j\}$ is a basis of $V_j\cap V_i$ for $j\in\{1,\dots ,r\}$ with $j\ne i$. Also, since $f_{\ell-1}\in W_i\setminus V_E$, we see that  $\{f_1, \dots, f_{\ell-1}, g_j\}$ is a basis of $V_j$, and so by Lemma \ref{elemlemma1}, 
each $\omega_j$ is a nonzero scalar multiple of $f_1 \wedge \dots \wedge f_{\ell-1} \wedge g_j$ for $j\in\{1,\dots ,r\}$ with $j\ne i$.
Now $\omega_1, \dots , \omega_{i-1}, \omega_{i+1}, \dots,  \omega_r$ are linearly independent elements of $\bigwedge^{\ell}V$, and therefore 
$f_1, \dots, f_{\ell-1}, g_1, \dots , g_{i-1}, g_{i+1}, \dots , g_{r}$ are linearly independent of $V$. In particular, the $\ell$-dimensional space $V_i$ contains $\ell+r-3$ linearly independent elements $f_1, \dots, f_{\ell-2}, g_1, \dots , g_{i-1}, g_{i+1}, \dots , g_{r}$, which is a contradiction since $r>3$. Thus we have shown that $E$ is close of type in I.
\smallskip

{\em Case 2:} $E_i$ is close of type II for each $i\in \{1,\dots, r\}$. 
\smallskip

In this case each $W_i:= V_{E_i}$ is of dimension  $\ell - r+2$ and as before, 
picking any $j\in\{1,\dots ,r\}$ with $j\ne i$, and 
using Lemma \ref{structure_lemma1}, we find 
$$
\ell - r +1 
= \dim W_i + \dim V_i -\dim (V_j + V_i) \le \dim V_E \le \dim W_i  = \ell - r +2.
$$
First, suppose $\dim V_E = \ell - r +1$. Fix a basis $\{u_1, \dots, u_{\ell-r+1}\}$ of $V_E$. For each $i\in\{1,\dots ,r\}$, choose $g_i\in W_i\setminus V_E$. Then $g_i\not\in V_i$ and $\{u_1, \dots, u_{\ell-r+1}, g_i\}$ is a basis of $W_i$ for $i=1,\dots , r$. Observe that the $\ell+1$ elements $u_1, \dots, u_{\ell-r+1}, g_1, \dots , g_r$ of $V$ are linearly independent [indeed, the vanishing of a linear combination of $u_1, \dots, u_{\ell-r+1}, g_1, \dots , g_r$ in which the coefficient of $g_i$ is nonzero implies that $g_i$ is in $V_i$]. 
Hence the subset $\{u_1, \dots, u_{\ell-r+1}, g_1, \dots , g_{i-1}, g_{i+1}, \dots ,  g_r \}$ 
of $V_i$ is 
a basis of $V_i$, and so 
in view of Lemma \ref{elemlemma1}, we see that 
$\omega_i$ is a nonzero scalar multiple of $u_1\wedge \dots \wedge u_{\ell-r+1}\wedge g_1 \cdots 
\wedge \check{ g_i} \wedge  \dots \wedge g_r$ for $i=1,\dots , r$. 
Thus we have shown that if $\dim V_E = \ell - r +1$, then $E$ is close of type II. 
Now suppose, if possible, $\dim V_E = \ell - r +2$. Then $V_E=W_i$ for $i=1,\dots , r$. 
Since $r>3$ and $E_{r-1}$ is close of type II, we see that the subspace $E^*:=\vspan\{\omega_1, \omega_2,\omega_{r}\}$ of $E_{r-1}$ is close of type II.  In particular, $\dim V_{E^*} = \dim V_1\cap V_2\cap V_r = \ell -2$.  
Thus, in view of Lemma \ref{structure_lemma1}, we see that  
$\dim (V_1+V_2+V_r)$ is at most
$$
\dim V_1 + \dim V_2 + \dim V_r - \dim V_1\cap V_2 - 
\dim V_1\cap V_r - \dim V_2\cap V_r + \dim V_1\cap V_2\cap V_r, \\
$$
which is $3\ell - 3(\ell -1) + (\ell -2) = \ell +1$. 
Also, by Lemma \ref{structure_lemma1}, we have 
$\dim (V_1+V_2+V_r)  \ge \dim (V_1+V_2) = \ell +1$. It follows that 
$ V_1+V_2+V_r = V_1 + V_2$, or equivalently, $V_r\subseteq V_1+V_2$. 
We will now use this to arrive at a contradiction. To this end, consider the space $E_r$. 
Since $E_r$ is close of type II, we can find $\ell+1$ linearly independent elements $u_1, \dots, u_{\ell-r+2}, g_1, \dots , g_{r-1}$ in $V$ such that $u_1, \dots, u_{\ell-r+2}$ span $W_r$ and $\omega_i = u_1\wedge \dots \wedge u_{\ell-r+2}\wedge g_1 \cdots 
\wedge \check{ g_i} \wedge  \dots \wedge g_{r-1}$ for $i=1,\dots , r-1$.  It is clear that $\{u_1, \dots, u_{\ell-r+2}, g_1, \dots , g_{r-1}\}$ is a basis of $V_1+V_2$. Also, since $W_r = V_E$, we can add $r-2$ elements to the set $\{u_1, \dots, u_{\ell-r+2}\}$ to obtain a basis of $V_r$. But, $V_r\subseteq V_1+V_2$ and so the additional $r-2$ basis elements of $V_r$ are linear combinations of $u_1, \dots, u_{\ell-r+2}, g_1, \dots , g_{r-1}$.
Consequently, $\omega_r$ is a linear combination of $\omega_1, \dots , \omega_{r-1}$, which is a contradiction. 
\end{proof}
 
\begin{corollary}
\label{cor:struturethm}
Let $\mu:=\max\{\ell, m-\ell\}+1$ and $r$ be any positive integer. Then $\bigwedge^{\ell}V$ has a   decomposable subspace of dimension $r$
if and only if $r\le \mu$. Moreover, a close subspace of type I (resp: type II) of dimension $r$ exists if and only if $r\le m-\ell +1$ (resp: $r\le \ell +1$).
\end{corollary}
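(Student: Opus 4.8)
The plan is to deduce everything from the Structure Theorem (Theorem~\ref{structure}), which identifies decomposable subspaces with close subspaces, so that a decomposable subspace of dimension $r$ exists precisely when a close subspace of type~I or of type~II of dimension $r$ exists. Throughout I would assume $\ell < m$ (as is implicit from Lemma~\ref{elemlemma1} onward); this guarantees $\ell+1 \le m$, which is what the type~II construction needs. Granting the two existence criteria for the individual types, the first assertion is then immediate: a decomposable subspace of dimension $r$ exists if and only if $r \le m-\ell+1$ or $r \le \ell+1$, i.e. if and only if $r \le \max\{m-\ell+1,\,\ell+1\} = \max\{m-\ell,\,\ell\}+1 = \mu$.

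For type~I, necessity is built into the definition: a type~I close subspace of dimension $r$ comes equipped with $\ell+r-1$ linearly independent vectors $f_1,\dots,f_{\ell-1},g_1,\dots,g_r$ of $V$, so $\ell+r-1 \le m$, that is, $r \le m-\ell+1$. For sufficiency, when $r \le m-\ell+1$ I would take $f_1,\dots,f_{\ell-1},g_1,\dots,g_r$ to be the first $\ell+r-1$ vectors of a basis of $V$ and set $E=\vspan\{f_1\wedge\cdots\wedge f_{\ell-1}\wedge g_i : i=1,\dots,r\}$. The only point to check is that $\dim E = r$, which holds because, after extending to a basis of $V$, these $r$ wedges are $r$ distinct members of the induced monomial basis $\{v_\alpha : \alpha\in I(\ell,m)\}$ of $\bigwedge^{\ell}V$, hence linearly independent.

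For type~II the argument is parallel. The defining data consist of $\ell-r+1$ vectors $u_i$ together with $r$ vectors $g_j$, all linearly independent; for this description to make sense we need $\ell-r+1 \ge 0$, forcing $r \le \ell+1$, and the $\ell+1$ required linearly independent vectors exist since $\ell+1 \le m$. Conversely, for $r \le \ell+1$ I would again pick $u_1,\dots,u_{\ell-r+1},g_1,\dots,g_r$ from a basis of $V$ and verify, exactly as above, that the $r$ generators $u_1\wedge\cdots\wedge u_{\ell-r+1}\wedge g_1\wedge\cdots\wedge\check{g_i}\wedge\cdots\wedge g_r$ are distinct basis monomials, so that $\dim E = r$. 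There is no serious obstacle here: the substance of the corollary is carried entirely by the Structure Theorem, and the only points demanding (routine) care are the nonnegativity constraint $\ell-r+1\ge 0$ that yields the type~II bound, the use of $\ell<m$ to secure the $\ell+1$ independent vectors, and the verification that each constructed generating set is linearly independent so that the spanned subspace genuinely has dimension $r$.
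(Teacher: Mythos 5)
Your proof is correct and follows essentially the same route as the paper: both directions rest on Theorem~\ref{structure}, with the bounds $r\le m-\ell+1$ and $r\le\ell+1$ extracted from the defining data of close subspaces and sufficiency established by the explicit constructions implicit in their definitions. The only cosmetic difference is that the paper cites Lemma~\ref{lem:VsubEandsupE} (via $\dim V^E\le m$ and $\dim V_E\ge 0$) for the necessity bounds, whereas you read them directly off the definitions and spell out the linear-independence check that the paper calls ``immediate.''
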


\begin{proof}  
Let $E$ be a subspace of $\bigwedge^{\ell}V$ of dimension $r$. 
By Lemma \ref{lem:VsubEandsupE}, if $E$ is  close of type I, 
then $\ell + r - 1 = \dim V^E \le m$,  that is, $r\le m-\ell +1$, whereas if $E$ is close of type II, then $\ell-r+ 1 = \dim V_E \ge 0$, that is, $r\le \ell +1$. Thus, Theorem \ref{structure} implies that if  $\bigwedge^{\ell}V$ has a   decomposable subspace of dimension $r$, then $r \le \mu$. The converse is an immediate consequence of the definition of close subspaces and their decomposability. 
\end{proof}

\begin{remark}
Decomposable vectors in $\bigwedge^{\ell}V$ are variously known as \emph{pure $\ell$-vectors} (cf. \cite[\S 11.13]{bourbaki}), \emph{extensors of step $\ell$} (cf. \cite[\S 3]{BBR}), or \emph{completely decomposable vectors} (cf. \cite{N}). Some of the preliminary lemmas proved initially in this section are not really new. For example, Lemma \ref{elemlemma1} appears essentially as Exercise 17 (a) in Bourbaki \cite[p. 650]{bourbaki} or as Theorem 1.1 in Marcus \cite{marcus}, Corollary \ref{corlem1} is basically Theorem 1.3 of \cite{marcus}, and Lemma \ref{structure_lemma1} is  
a consequence of Exercise 17 (c) in \cite[p. 651]{bourbaki}. We have stated these results in a form convenient for our purpose, and included the proofs for the sake of completeness. At any rate,  as far as we know, Theorem \ref{structure} is new. On the other hand, characterization of decomposable subspaces has been studied in the setting of symmetric algebras. Although one comes across subspaces of various types, including those similar to the ones considered in this section, the situation for subspaces of symmetric powers is rather different and the characteristic of the underlying field plays a role. We refer to the papers of Cummings \cite{cummings} and Lim \cite{lim} for more on this topic. In the context of tensor algebras, the opposite of decomposable subspaces has been considered, namely, \emph{completely entangled} subspaces wherein no nonzero element is decomposable. A neat formula for the maximum possible dimension of completely entangled subspaces of the tensor product of finite dimensional complex vector spaces is given by Parthasarathy  \cite{krp}. As remarked earlier, determining the structure of decomposable subspaces corresponds to determining the linear subvarieties in the Grassmann variety $G_{\ell,m}$. A special case of this has been considered, in a similar, but more general, geometric setting by Tanao \cite{tanao}, where subvarieties of $G_{2,m}$ biregular to $\PP^m$ over an algebraically closed field of characteristic zero are studied. 
\end{remark}

\section{Duality and the Hodge Star Operator}
\label{sec:duality}

We have seen in Section \ref{sec:cd} that a   decomposable subspace of $\bigwedge^{\ell}V$ is close of type I or of type II. It turns out that the two types are dual to each other. 
This is best described using the so called Hodge star operator
$
\hodge: {\bigwedge}^{\ell}V \to {\bigwedge}^{m-\ell}V,
$
which may be defined as follows. Fix an ordered basis $\{e_1, \dots , e_m\}$ of $V$ and use it to identify $\bigwedge^{m}V$ with $F$ so that $e_1\wedge\cdots \wedge e_m = 1$. Let $I(\ell,m)$ and $e_{\a}$ for $\a\in I(\ell,m)$ be as in Section \ref{sec:cd}. Moreover, for $\a=(\a_1, \dots , \a_{\ell})\in I(\ell,m)$, let $\a^c= (\a^c_1, \dots , \a^c_{m-\ell})$ denote the unique element of $I(m-\ell,m)$ such that 
$\{\a_1, \dots , \a_{\ell}\}\cup \{\a^c_1, \dots , \a^c_{m-\ell}\} = \{1,\dots, m\}$. 
Then $\hodge: \bigwedge^{\ell}V \to \bigwedge^{m-\ell}V$ is the unique $F$-linear map satisfying
$$
\hodge (e_{\a} ) = (-1)^{\a_1+\cdots + \a_{\ell} + \ell(\ell+1)/2} \, e_{\a^c} \quad \mbox{ for } \a\in I(\ell,m).
$$ 
Clearly, $\hodge$ is a vector space isomorphism. 
The key property of $\hodge$ is that it is 
essentially independent of the choice of ordered basis of $V$, and as such, it maps   decomposable elements in $\bigwedge^{\ell}V$ to   decomposable elements in $\bigwedge^{m-\ell}V$. (See, for example, \cite[Sec. 6]{BBR} and \cite[Sec. 4.1]{marcus}.) In particular,   decomposable subspace of $\bigwedge^{\ell}V$ are mapped to   decomposable subspaces of  $\bigwedge^{m-\ell}V$. 
Moreover, it is easy to see that via the Hodge star operator, 
close subspaces of type I are mapped  to close subspaces of  type II, whereas
close subspaces of type II are mapped  to close subspaces of  type I. 
Thus, the two types are dual to each other. 
 
In the case $\ell =2$, both $\bigwedge^{\ell}V$ and $\bigwedge^{m-\ell}V$ are closely related to the space $B_m$ of all $m\times m$ skew-symmetric matrices with entries in $F$, and the relation is compatible with the Hodge star operator. To state this a little more formally, we introduce some terminology below and make a few useful observations. In the remainder of this section we tacitly assume that $m>2$. 

Given any $u\in V$, let $\bfu$ denote the $m\times 1$ column vector whose entries are the coordinates of $u$ with respect to the ordered basis $\{e_1, \dots , e_m\}$. In particular, $\e_i$ has $1$ as its  $i$th entry and all other entries are $0$.  
Consider the $F$-linear maps
$$
\sigma : {\bigwedge}^{2}V \to B_m \quad \mbox{ and }\quad  \pi : {\bigwedge}^{m-2}V \to B_m 
$$
defined by 
$$
\sigma(e_r\wedge e_s) = \e_r\e_s^t - \e_s\e_r^t \mbox{ for  } 1\le r<s\le m \quad \mbox{ and } \quad  \pi (\omega) = A_{\omega} \mbox{ for } \omega\in {\bigwedge}^{m-2}V,
$$
where $\e^t$ denotes the transpose of $\e$  
and $A_{\omega}$ denotes the $m\times m$ matrix whose $(i,j)$th entry is (the unique scalar corresponding to) $e_i\wedge e_j \wedge \omega$. 

\begin{lemma}
\label{lem:compat}
$\sigma = \pi \circ \hodge$.
\end{lemma}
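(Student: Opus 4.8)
The plan is to use that both $\sigma$ and $\pi\circ\hodge$ are $F$-linear maps from $\bigwedge^{2}V$ to $B_m$, so it suffices to check that they agree on the basis $\{e_r\wedge e_s : 1\le r<s\le m\}$ of $\bigwedge^{2}V$. For such a basis vector, $\sigma(e_r\wedge e_s)=\e_r\e_s^t-\e_s\e_r^t$ is the skew-symmetric matrix with entry $+1$ in position $(r,s)$, entry $-1$ in position $(s,r)$, and $0$ elsewhere. The goal is therefore to show that $\pi(\hodge(e_r\wedge e_s))$ is exactly this matrix.

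First I would compute $\hodge(e_r\wedge e_s)$. Since $\ell=2$ here, we have $\ell(\ell+1)/2=3$, so the defining formula gives $\hodge(e_r\wedge e_s)=(-1)^{r+s+1}e_{(r,s)^c}$, where $(r,s)^c\in I(m-2,m)$ lists the complement $\{1,\dots,m\}\setminus\{r,s\}$ in increasing order. Setting $\omega=e_{(r,s)^c}$, I then examine the matrix $A_{\omega}$ whose $(i,j)$ entry is the scalar $e_i\wedge e_j\wedge\omega$ under the identification $e_1\wedge\cdots\wedge e_m=1$. The key structural observation is that $\omega$ already contains every basis vector except $e_r$ and $e_s$; hence $e_i\wedge e_j\wedge\omega$ has a repeated factor and vanishes unless $\{i,j\}=\{r,s\}$. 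Thus $A_{\omega}$ is supported precisely on the two entries $(r,s)$ and $(s,r)$, matching the support of $\sigma(e_r\wedge e_s)$.

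It remains to pin down the two surviving signs, and this sign bookkeeping is the only real obstacle. I would compute $e_r\wedge e_s\wedge e_{\beta_1}\wedge\cdots\wedge e_{\beta_{m-2}}=(-1)^N e_1\wedge\cdots\wedge e_m$ by counting the inversions of the sequence $(r,s,\beta_1,\dots,\beta_{m-2})$: the entry $r$ is out of order with the $r-1$ complement indices below it, the entry $s$ with the $s-2$ complement indices below it (the index $r<s$ being excluded), and the $\beta_k$ are already sorted, giving $N=r+s-3$. Hence the $(r,s)$ entry of $A_{\omega}$ equals $(-1)^{r+s-3}$ and, by antisymmetry of the wedge in $e_i,e_j$, the $(s,r)$ entry equals $(-1)^{r+s-2}$. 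Multiplying $A_{\omega}$ by the Hodge sign $(-1)^{r+s+1}$ then produces $+1$ in position $(r,s)$ and $-1$ in position $(s,r)$, which agrees with $\sigma(e_r\wedge e_s)$ entrywise. By linearity this yields $\sigma=\pi\circ\hodge$. The whole argument is elementary; the only care needed is to ensure that the inversion count and the Hodge convention combine to reproduce exactly the signs of $\sigma$, rather than their negatives.
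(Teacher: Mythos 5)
Your proof is correct and follows essentially the same route as the paper: reduce to the basis vectors $e_r\wedge e_s$ by linearity, compute $\hodge(e_r\wedge e_s)=(-1)^{r+s+1}e_{(r,s)^c}$, observe that $e_i\wedge e_j\wedge e_{(r,s)^c}$ vanishes unless $\{i,j\}=\{r,s\}$, and verify that the signs $(-1)^{r+s-3}$ and $(-1)^{r+s-2}$ combine with the Hodge sign to give exactly $\e_r\e_s^t-\e_s\e_r^t$. Your explicit inversion count $N=r+s-3$ is just a more detailed justification of the sign evaluation the paper states directly.
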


\begin{proof}  
We have 
$\hodge (e_r \wedge e_s) = (-1)^{r+s+1} \left(e_1\wedge e_2\wedge \cdots \wedge \check{e_r}\wedge \cdots \wedge\check{e_s}\wedge\cdots\wedge e_m\right)$ 
for $1\le r<s\le m$,
where  $\check{ }$ indicates that the corresponding entry is removed. Now,
$$
e_i \wedge e_j \wedge \left(e_1\wedge e_2\wedge \cdots \wedge \check{e_r}\wedge \cdots \wedge\check{e_s}\wedge\cdots\wedge e_m\right) = \left\{ 
\begin{array}{cl} 
(-1)^{i+j-3} & \mbox{ if } (r,s) = (i,j), \\
(-1)^{i+j-2} & \mbox{ if } (r,s) = (j,i), \\
0 & \mbox{ otherwise }  \\\end{array} \right.
$$
for $1\le i,j,r,s \le m$ with $r<s$. 
It follows that  
$ \pi \circ \hodge(e_r \wedge e_s) = \e_r\e_s^t - \e_s\e_r^t = \sigma (e_r \wedge e_s)$
for $1\le r<s\le m$. Since $\{e_r \wedge e_s : 1\le r<s\le m\}$ is a basis of 
${\bigwedge}^{2}V$ and all the maps are linear, the lemma is proved.
\end{proof}

Given any $\omega'\in {\bigwedge}^{2}V$ and $\omega\in {\bigwedge}^{m-2}V$, we refer to the rank of $\sigma(\omega')$ [resp: $\pi(\omega)$] as the {\em rank} of $\omega'$ [resp: $\omega$], and denote it by $\rank(\omega')$ [resp: $\rank(\omega)$]. 
Note that if $\omega= \hodge(\omega')$, then $\rank(\omega') = \rank\left(\omega\right)$, thanks to Lemma \ref{lem:compat}. 

\begin{corollary}
\label{cor:sigmapirank2}
Both $\sigma$ and $\pi$ are vector space isomorphisms. Moreover, 
\begin{equation}
\label{decomprank2}
\omega' \mbox{ is decomposable } \Longleftrightarrow \rank (\omega') = 2 
\qquad \mbox{ for any } \omega' \in {\bigwedge}^{2}V,
\end{equation}
and
\begin{equation}
\label{dualdecomprank2}
\omega \mbox{ is decomposable } \Longleftrightarrow \rank (\omega) = 2
\qquad \mbox{ for any } \omega \in {\bigwedge}^{m-2}V.
\end{equation}
\end{corollary}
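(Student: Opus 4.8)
The plan is to establish the two isomorphism claims first and then the rank characterizations, deriving the second equivalence from the first by Hodge duality. For $\sigma$, I would observe that it carries the basis $\{e_r\wedge e_s : 1\le r<s\le m\}$ of $\bigwedge^2 V$ to the family $\{\e_r\e_s^t - \e_s\e_r^t : 1\le r<s\le m\}$, which is precisely the standard basis of elementary skew-symmetric matrices in $B_m$; since $\dim\bigwedge^2 V = \binom{m}{2} = \dim B_m$, this shows $\sigma$ is a vector space isomorphism. For $\pi$, I would invoke Lemma \ref{lem:compat}, which gives $\sigma = \pi\circ\hodge$; as $\sigma$ is an isomorphism and $\hodge$ is a vector space isomorphism, it follows that $\pi = \sigma\circ\hodge^{-1}$ is an isomorphism as well.

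For the equivalence \eqref{decomprank2} I would argue both directions. For the forward implication, if $\omega' = u\wedge v$ is decomposable then $u,v$ are linearly independent, and a direct coordinate expansion gives $\sigma(u\wedge v) = \bfu\bfv^t - \bfv\bfu^t$. Applying this matrix to an arbitrary $\bfx$ yields $(\bfv^t\bfx)\bfu - (\bfu^t\bfx)\bfv$, so its image is exactly $\vspan\{\bfu,\bfv\}$, a two-dimensional space; hence $\rank(\omega') = 2$. For the converse, I would use the classical normal form for alternating $2$-vectors: any $\omega'\in\bigwedge^2 V$ can be written as $\omega' = f_1\wedge f_2 + f_3\wedge f_4 + \cdots + f_{2t-1}\wedge f_{2t}$ with $f_1,\dots,f_{2t}$ linearly independent, and then $\sigma(\omega') = \sum_{i=1}^{t}(\mathbf{f}_{2i-1}\mathbf{f}_{2i}^t - \mathbf{f}_{2i}\mathbf{f}_{2i-1}^t)$ has rank exactly $2t$, because the $2t$ linearly independent vectors $\mathbf{f}_1,\dots,\mathbf{f}_{2t}$ span its column space. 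Thus $\rank(\omega') = 2$ forces $t=1$, whence $\omega' = f_1\wedge f_2$ is decomposable.

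The equivalence \eqref{dualdecomprank2} I would then obtain by duality. Given $\omega\in\bigwedge^{m-2}V$, set $\omega' = \hodge^{-1}(\omega)\in\bigwedge^2 V$, so that $\omega = \hodge(\omega')$. By the observation following Lemma \ref{lem:compat} we have $\rank(\omega) = \rank(\omega')$, and since $\hodge$ is an isomorphism carrying decomposable elements to decomposable elements (and likewise its inverse), $\omega$ is decomposable if and only if $\omega'$ is. Combining these facts with \eqref{decomprank2} yields that $\omega$ is decomposable if and only if $\omega'$ is decomposable, if and only if $\rank(\omega') = 2$, if and only if $\rank(\omega) = 2$.

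The main obstacle is the converse direction of \eqref{decomprank2}, passing from the rank of the skew-symmetric matrix $\sigma(\omega')$ back to decomposability of $\omega'$. This rests on the normal form theorem for alternating bilinear forms, equivalently the classification of skew-symmetric matrices up to congruence, which identifies half the matrix rank with the minimal number of decomposable summands of $\omega'$. If one prefers a self-contained route, this step can be replaced by the direct linear-algebra fact that a rank-two skew-symmetric matrix $A$ factors as $\bfu\bfv^t - \bfv\bfu^t$: one takes $\bfu,\bfv$ spanning the two-dimensional column space of $A$ and uses skew-symmetry together with the equality of the row and column spaces to pin down the factorization. Either way, this is the one genuinely non-formal step, all other parts being bookkeeping with the isomorphisms and the Hodge duality already recorded above.
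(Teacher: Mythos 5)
Your proof is correct, but it runs in the opposite direction from the paper's. The paper proves \eqref{dualdecomprank2} first and directly: it observes that the linear map corresponding to $\pi(\omega)=A_\omega$ has kernel exactly $V_{\omega}$, so $\rank(\omega)=2$ if and only if $\dim V_{\omega}=m-2$, which by Lemma \ref{elemlemma1} (applied with $\ell=m-2$) is precisely decomposability of $\omega$; it then transfers this to \eqref{decomprank2} via Lemma \ref{lem:compat} and the Hodge correspondence, with only the easy implication ($\omega'=u\wedge v \Rightarrow \rank\bigl(\bfu\bfv^t-\bfv\bfu^t\bigr)=2$) done by hand. You instead prove \eqref{decomprank2} first and directly, using the classical normal form $\omega'=f_1\wedge f_2+\cdots+f_{2t-1}\wedge f_{2t}$ with $f_1,\dots,f_{2t}$ linearly independent (so that $\rank\sigma(\omega')=2t$), and then transfer to \eqref{dualdecomprank2} by Hodge duality. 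Both transfers rest on the same two facts (rank is preserved under $\hodge$ by Lemma \ref{lem:compat}, and $\hodge$ gives a bijection between decomposable elements), so the logic is sound either way. The trade-off: the paper's route is entirely internal, needing nothing beyond its own Lemma \ref{elemlemma1} and the kernel identity $\ker A_\omega = V_\omega$, whereas your route imports the symplectic normal form theorem for alternating bivectors (valid over any field, including $\FF_2$, but external to the paper); in exchange, your argument for \eqref{decomprank2} never uses $\pi$ or the kernel observation at all, and your column-space computations make the rank statements completely explicit.
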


\begin{proof}
It is evident that $\sigma$ is an isomorphism. Hence by Lemma \ref{lem:compat}, so is $\pi$. 
Now,  given any $\omega\in {\bigwedge}^{m-2}V$, the kernel of (the linear map from $V$ to $V$ corresponding to) $\pi(\omega) = A_\omega$ is the space $V_{\omega}$. 
Hence \eqref{dualdecomprank2} follows from Lemma \ref{elemlemma1}. Next, if $\omega' \in {\bigwedge}^{2}V$ is decomposable, then $\omega'=u\wedge v$ for some $u,v\in V$ and $\sigma(\omega') = \bfu\bfv^t -\bfv\bfu^t$. It follows that $\sigma(\omega')$ is of rank $2$. 
This proves the implication $\Longrightarrow$ in \eqref{decomprank2}. The other implication follows from \eqref{dualdecomprank2} together with Lemma \ref{lem:compat} and the fact $\hodge$ gives a one-to-one correspondence between decomposable elements. 
\end{proof} 

\begin{corollary}
\label{cor:sigmarank4}
Let  $v_1,v_2,v_3,v_4 \in V$ and suppose 
$\omega:= (v_1\wedge v_2)+(v_3 \wedge v_4) \in \bigwedge^2V$ is nonzero. Then the rank of $\sigma(\omega)$ is $2$ or $4$ according as the set $\{v_1,v_2,v_3,v_4\}$  is 
linearly dependent or linearly independent.
\end{corollary}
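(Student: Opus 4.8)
The plan is to reduce everything to two facts already in hand: Corollary~\ref{lemma_ab}, which says that the nonzero element $\omega$ is decomposable precisely when $\{v_1,v_2,v_3,v_4\}$ is linearly dependent, and the equivalence \eqref{decomprank2}, which says that a nonzero element of $\bigwedge^2 V$ is decomposable if and only if its rank (that is, the rank of its image under $\sigma$) equals $2$. Combining these disposes of the dependent case at once: if $\{v_1,v_2,v_3,v_4\}$ is linearly dependent, then $\omega$ is decomposable, and hence $\rank\sigma(\omega) = \rank(\omega) = 2$.

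For the independent case the same two facts give that $\omega$ is \emph{not} decomposable, so $\rank\sigma(\omega)\ne 2$; the real content is to pin the value down to exactly $4$. First I would record the lower bound. Since $\sigma$ is a vector space isomorphism by Corollary~\ref{cor:sigmapirank2} and $\omega\ne 0$, the matrix $\sigma(\omega)$ is nonzero, so $\rank\sigma(\omega)\ge 2$; and since $\sigma(\omega)$ is skew-symmetric its rank is even. Together with $\rank\sigma(\omega)\ne 2$ this forces $\rank\sigma(\omega)\ge 4$.

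The matching upper bound comes from the explicit form of $\sigma(\omega)$. By linearity of $\sigma$ and the defining formula $\sigma(u\wedge v)=\bfu\bfv^t-\bfv\bfu^t$,
\[
\sigma(\omega) = \bigl(\mathbf{v}_1\mathbf{v}_2^t - \mathbf{v}_2\mathbf{v}_1^t\bigr) + \bigl(\mathbf{v}_3\mathbf{v}_4^t - \mathbf{v}_4\mathbf{v}_3^t\bigr).
\]
For any column vector $\mathbf{x}$ the product $\sigma(\omega)\mathbf{x}$ is a linear combination of $\mathbf{v}_1,\mathbf{v}_2,\mathbf{v}_3,\mathbf{v}_4$, so the column space of $\sigma(\omega)$ is contained in $\vspan\{\mathbf{v}_1,\mathbf{v}_2,\mathbf{v}_3,\mathbf{v}_4\}$, whence $\rank\sigma(\omega)\le 4$. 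Combined with the lower bound, this yields $\rank\sigma(\omega)=4$, completing the proof.

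I expect the only non-routine point to be the independent case, and there the subtlety is not any single estimate but the need to combine three constraints simultaneously — evenness of the rank, nonvanishing of $\sigma(\omega)$, and the failure of decomposability — to exclude every value below $4$, with the column-space containment supplying the ceiling. Neither bound alone suffices, but together they leave $4$ as the only possibility.
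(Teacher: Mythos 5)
Your proof is correct and follows essentially the same route as the paper, which likewise deduces the result from Corollary~\ref{lemma_ab}, the equivalence \eqref{decomprank2}, and the evenness of the rank of a skew-symmetric matrix. The only difference is one of explicitness: you spell out the upper bound $\rank\sigma(\omega)\le 4$ via the column-space containment in $\vspan\{\mathbf{v}_1,\mathbf{v}_2,\mathbf{v}_3,\mathbf{v}_4\}$, a step the paper leaves implicit (a sum of two matrices of rank at most $2$ has rank at most $4$).
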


\begin{proof} 
Follows from  \eqref{decomprank2} above and Corollary \ref{lemma_ab} in view of the fact that 
a skew-symmetric matrix is always of even rank. 
\end{proof}

\section{Griesmer-Wei Bound and its Generalization}
\label{sec:Griesmer}

Let us begin by reviewing some generalities about (linear, error correcting) codes. 
Fix integers $k,n$ with $1\le k \le n$ and a prime power $q$. 
Let $C$ be a linear $[n,k]_q$-code, i.e., 
let $C$ be a $k$-dimensional subspace of the $n$-dimensional vector space $\Fq^n$ 
over the finite field $\Fq$ with $q$ elements. 
Given any $x=(x_1,\dots,x_n)$ in $\Fq^n$, let
$$
\supp (x):=\{ i : x_i\ne 0\} \quad \mbox{ and } \quad \Vert x \Vert := |\supp (x)|
$$
denote the {\em support} and the {\em (Hamming) norm} of $x$. 
More generally, for $D\subseteq \Fq^n$, 
let
$$
\supp (D):=\{ i : x_i\ne 0 \mbox{ for some } x=(x_1,\dots ,x_n) \in D\} 
\quad \mbox{ and } \quad \Vert D \Vert := |\supp (D)|
$$
denote the {\em support} and the {\em (Hamming) norm} of $D$. The {\em minimum distance} of $C$ is defined by 
$d(C):=\min\{\Vert x \Vert : x\in C \mbox{  with } x\ne 0\}$. 
More generally, for any positive integer $r$, the $r^{\rm th}$ 
{\em higher weight} $d_r = d_r(C)$ of the code $C$ is defined by
$$
d_r(C) := \min\left\{ \Vert D \Vert : D \mbox{ is a subspace of $C$ with } \dim D =r\right\}.
$$
Note that $d_1(C)=d(C)$. 
If $C$ is {\em nondegenerate}, that is, if
$C$ is not contained in a coordinate hyperplane of $\Fq^n$, then it is easy to see that
$$ 
0< d_1(C) < d_2(C) < \cdots < d_k(C) = n.
$$
See, for example, \cite{TV2} for a proof as well as a great deal of basic information about
higher weights of codes. The set $\{d_r(C):  1\le r \le k\}$ is often referred to as the
{\em weight hierarchy} of the code $C$. It is usually interesting, and difficult, to   
determine the weight hierarchy of a given code. Again, we refer to  \cite{TV2} for a variety
of examples, such as affine and projective Reed-Muller codes, codes associated to Hermitian
varieties or Del Pezzo surfaces, hyperelliptic curves, etc., where the weight hierarchy 
is completely or partially known. 

The following elementary result will be useful in the sequel. It appears, for example, in \cite[Lemma 2]{klove}. We include a proof for the sake of completeness. 

\begin{lemma}
\label{norm_formula}
Let $D$ be a $r$-dimensional code of a $[n,k]_q$-code $C$. Then 
$$
\Vert D \Vert = \frac{1}{q^r - q^{r-1}} \sum_{x\in D} \Vert x \Vert .
$$
In particular,
$$
d_r(C) = \frac{1}{q^r - q^{r-1}} \min\left\{\sum_{x\in D} \Vert x \Vert  :   D \mbox{ is a subspace of $C$ with } \dim D =r\right\}.
$$
\end{lemma}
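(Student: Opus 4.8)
The plan is to evaluate the sum $\sum_{x\in D} \Vert x \Vert$ by a double-counting (Fubini) argument, exploiting the linear structure of $D$ one coordinate at a time. The starting observation is that $\Vert x \Vert = |\supp(x)|$ counts the coordinate positions $i$ at which $x_i\ne 0$, so that
$$
\sum_{x\in D} \Vert x \Vert = \sum_{x\in D}\; \sum_{i=1}^n \big[\, x_i\ne 0\,\big] = \sum_{i=1}^n \big|\{x\in D : x_i\ne 0\}\big|,
$$
where I have simply interchanged the two finite sums. The problem is thereby reduced to counting, for each fixed coordinate $i$, the number of codewords in $D$ that are nonzero in position $i$.

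The key step is to analyze this count via the $i$th coordinate projection $\phi_i : D \to \Fq$, $x\mapsto x_i$, which is an $\Fq$-linear functional on the $r$-dimensional space $D$. There are exactly two cases. If $i\notin \supp(D)$, then $\phi_i$ vanishes identically on $D$ and contributes nothing. If $i\in \supp(D)$, then by definition of $\supp(D)$ the functional $\phi_i$ is not identically zero, hence surjective onto $\Fq$; its kernel then has dimension $r-1$, so $|\ker\phi_i| = q^{r-1}$ and consequently $|\{x\in D : x_i\ne 0\}| = |D| - |\ker\phi_i| = q^r - q^{r-1}$. The crucial point to flag is the uniformity here: this count is the same value $q^r-q^{r-1}$ for \emph{every} coordinate in the support, independent of $i$, and this is precisely what makes the final formula clean.

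Substituting these two cases back, only the coordinates $i\in\supp(D)$ contribute, each with the constant value $q^r - q^{r-1}$, giving
$$
\sum_{x\in D} \Vert x \Vert = \sum_{i\in\supp(D)} (q^r - q^{r-1}) = (q^r - q^{r-1})\,|\supp(D)| = (q^r - q^{r-1})\,\Vert D \Vert.
$$
Dividing through by $q^r - q^{r-1}$ (which is nonzero since $q\ge 2$ and $r\ge 1$) yields the asserted identity. The ``in particular'' statement is then immediate: taking the minimum of both sides over all $r$-dimensional subspaces $D$ of $C$ and invoking the definition $d_r(C) = \min\{\Vert D\Vert\}$, the scalar factor $\tfrac{1}{q^r-q^{r-1}}$ pulls out of the minimum. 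I do not anticipate a genuine obstacle; the only substantive ingredient is the surjectivity of a nonzero coordinate functional, and the rest is bookkeeping.
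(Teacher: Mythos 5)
Your proof is correct and takes essentially the same route as the paper's: both interchange the order of summation (double counting pairs $(x,i)$ with $x_i\ne 0$) and then use the key fact that for $i\in \supp(D)$ the coordinate functional $x\mapsto x_i$ is a nonzero linear map $D\to\Fq$, so exactly $q^r-q^{r-1}$ codewords of $D$ are nonzero in position $i$. The ``in particular'' step is handled identically, so there is nothing to add.
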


\begin{proof}
Clearly, $(x,i)\mapsto (i,x)$ gives a bijection of $\{(x,i): x\in D \mbox{ and } i\in \supp (x) \}$ onto 
$\{(i,x): i\in \supp (D), \ x\in D \mbox{ and } x_i\ne 0 \}$. Hence 
$$
\sum_{x\in D} \Vert x \Vert = \sum_{x\in D} \, \sum_{i\in \supp (x)} 1 
= \sum_{i \in \supp(D)} \mathop{\sum_{x\in D}}_{x_i\ne 0} 1 
= \sum_{i \in \supp(D)} (q^r - q^{r-1}) = (q^r - q^{r-1}) \Vert D \Vert,
$$
where the penultimate equality follows by noting that if $i\in \supp (D)$, then $x\mapsto x_i$ defines a nonzero linear map of $D\to \Fq$.
\end{proof}

We remark that the Griesmer bound as well as the Griesmer-Wei bound is an easy consequence of the above lemma. In fact, as we shall see below, it can  also be used to derive a useful generalization of the Griesmer-Wei bound. To this end, we need to look at the elements of minimum Hamming weight as well as the second lowest positive exponent in the weight enumerator polynomial of $C$, provided of course this polynomial has at least two terms with 
positive exponents. 

Let $C$ be a linear $[n,k]_q$-code. 
Given any subspace $D$ of $C$, we let 
$$
\Delta (D):=\left\vert \left\{x\in D : \Vert x \Vert = d(C) \right\} \right\vert.
$$
Given any $r\in \Z$ with $1\le r\le k$, we let 
$$
\Delta_r(C):=\max\left\{\Delta (D) : D \mbox{ is a subspace of $C$ with } \dim D =r\right\}.
$$
Further, upon letting $S_C:= \left\{\Vert x\Vert : x\in C \mbox{ with } \Vert x \Vert > d(C) \right\}$, we define 
$$
e(C):=\left\{\begin{array}{ll}
\min S_C & \mbox{ if $S_C$ is nonempty,} \\
d(C) & \mbox{ if $S_C$ is the empty set.} \end{array} \right. 
$$
It may be noted that $e(C)\ge d(C)$ and also that the equality holds if and only if $\Delta_k(C)=q^k-1$. We are now ready to prove a simple, but useful  
generalization of the Griesmer-Wei bound. 

\begin{theorem}
\label{thm:eC}
Let $C$ be a linear $[n,k]_q$-code and $r$ be an integer 
with $1\le r\le k$. Then 
$$
d_r(C) \ge \frac{d(C) \Delta_r(C) +e(C)(q^r - 1 - \Delta_r(C))}{q^r- q^{r-1}}.
$$
\end{theorem}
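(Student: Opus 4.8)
The plan is to apply the counting identity of Lemma~\ref{norm_formula} to an \emph{arbitrary} $r$-dimensional subspace $D$ of $C$ and then bound the sum $\sum_{x\in D}\Vert x\Vert$ from below by classifying the nonzero vectors of $D$ according to their Hamming norm. Fix an $r$-dimensional subspace $D\subseteq C$. By Lemma~\ref{norm_formula}, we have
$$
\Vert D\Vert = \frac{1}{q^r-q^{r-1}}\sum_{x\in D}\Vert x\Vert,
$$
so it suffices to produce a lower bound for $\sum_{x\in D}\Vert x\Vert$ that is uniform over all such $D$; taking the minimum over $D$ will then yield $d_r(C)$ on the left-hand side and the claimed quantity on the right.

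Next I would partition the $q^r-1$ nonzero vectors of $D$ into three classes: those with $\Vert x\Vert = d(C)$, those with $d(C)<\Vert x\Vert$, and (vacuously) those with $0<\Vert x\Vert<d(C)$, which cannot occur by the very definition of $d(C)$ as the minimum nonzero norm. Let $a$ denote the number of vectors $x\in D$ with $\Vert x\Vert = d(C)$; then exactly $q^r-1-a$ nonzero vectors satisfy $\Vert x\Vert > d(C)$, and by the definition of $e(C)$ each such vector has norm at least $e(C)$ (note that if no vector of $D$ exceeds $d(C)$ in norm, then $q^r-1-a=0$ and the corresponding term contributes nothing, so the bound remains valid regardless of whether $S_C$ is empty). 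This gives
$$
\sum_{x\in D}\Vert x\Vert \ \ge\ a\,d(C) + (q^r-1-a)\,e(C).
$$

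The key remaining step is to replace the $D$-dependent quantity $a=\Delta(D)$ by the global bound $\Delta_r(C)$. Since $e(C)\ge d(C)$, the right-hand side above is a \emph{nondecreasing} function of $a$: increasing $a$ trades a term of size $e(C)$ for one of size $d(C)\le e(C)$, which cannot increase the sum, so the expression is in fact nonincreasing in $a$, and to get a valid lower bound over all $D$ I must use the \emph{largest} admissible value of $a$, namely $\Delta_r(C)=\max_D\Delta(D)$. Substituting $a\le\Delta_r(C)$ and using $e(C)\ge d(C)$ to see that the estimate only weakens, we obtain
$$
\sum_{x\in D}\Vert x\Vert \ \ge\ d(C)\,\Delta_r(C) + e(C)\bigl(q^r-1-\Delta_r(C)\bigr).
$$
Dividing by $q^r-q^{r-1}$ and taking the minimum over all $r$-dimensional $D$ finishes the proof. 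The one point requiring care—and the place I expect the only real subtlety to lie—is the monotonicity direction in $a$: one must verify that replacing $\Delta(D)$ by the larger quantity $\Delta_r(C)$ genuinely \emph{decreases} the bound (because each unit increase in $a$ swaps an $e(C)$ for a smaller-or-equal $d(C)$), so that the worst case over all $D$ is correctly captured. Everything else is the bijective double-count of Lemma~\ref{norm_formula} and the elementary observation that no nonzero codeword has norm below $d(C)$.
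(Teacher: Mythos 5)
Your proof is correct and follows essentially the same route as the paper: both apply Lemma~\ref{norm_formula}, split the nonzero vectors of an $r$-dimensional subspace into those of norm $d(C)$ and those of norm $>d(C)$ (the latter bounded below by $e(C)$), and then use $d(C)\le e(C)$ together with $\Delta(D)\le\Delta_r(C)$ to pass to the global quantity $\Delta_r(C)$. The only blemish is the momentary slip where you first call the bound ``nondecreasing'' in $a$ before correctly settling on ``nonincreasing''; the conclusion you draw from the monotonicity is the right one, so the argument stands.
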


\begin{proof}
Let $D_r$ be a $r$-dimensional subspace of $C$ such that 
$$
\sum_{x\in D_r} \Vert x \Vert = 
\min\left\{\sum_{x\in D} \Vert x \Vert  :   D \mbox{ is a subspace of $C$ with } \dim D =r\right\}.
$$
Then $D_r$ has $q^r-1$ nonzero elements and so, in view of Lemma \ref{norm_formula}, we have 
\begin{eqnarray*}
\left(q^r - q^{r-1}\right)d_r(C) & = & \mathop{\sum_{x\in D_r}}_{\Vert x \Vert = d(C)} d(C) + \mathop{\sum_{x\in D_r}}_{\Vert x \Vert > d(C)} \Vert x \Vert \\
& \ge  & d(C) \Delta(D_r) + e(C) \left(q^r-1-\Delta(D_r)\right) \\
& \ge  & e(C) \left(q^r-1 \right) -  \Delta_r(C) \left(e(C)-d(C)\right), 
\end{eqnarray*}
where the last inequality follows since $\Delta(D_r)\le \Delta_r(C)$ and $d(C)\le e(C)$.  
This yields the desired formula. 
\end{proof}

\begin{corollary}[Griesmer-Wei Bound]
Given any linear $[n,k]_q$-code $C$, we have
$$
d_r(C) \ge \sum_{j=0}^{r-1} \left\lceil \frac{d(C)}{q^j}\right \rceil  \qquad 
\mbox{ for } 1\le r\le k.
$$
\end{corollary}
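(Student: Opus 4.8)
The plan is to derive the Griesmer-Wei bound as a special case of Theorem~\ref{thm:eC} by choosing the crudest possible estimates for $e(C)$ and $\Delta_r(C)$, and then to prove the claimed inequality by induction on $r$. First I would observe that Theorem~\ref{thm:eC} already gives a lower bound for $d_r(C)$, but it involves the invariants $e(C)$ and $\Delta_r(C)$ which are generally hard to control. The key simplification is that the Griesmer-Wei bound does \emph{not} require any information beyond $d(C)$, so I would look for a way to eliminate these invariants. The natural choice is to use the trivial bounds $e(C)\ge d(C)$ and $\Delta_r(C)\le q^{r-1}-1$; the latter holds because the elements of a fixed nonzero norm in $D_r$, together with $0$, cannot form a subspace of dimension $r$ unless they all lie in a proper subspace, and in fact the minimum-weight vectors together with $0$ span a subspace whose nonzero elements of minimum weight number at most $q^{r-1}-1$.

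Rather than pushing Theorem~\ref{thm:eC} directly, the cleaner route is an inductive argument using Lemma~\ref{norm_formula} together with a residual/shortening construction. The standard approach is: given an $r$-dimensional subspace $D$ of $C$ realizing $d_r(C)$, pick a coordinate $i\in\supp(D)$ and consider the subcode $D'=\{x\in D : x_i=0\}$, which has dimension $r-1$. One shows that after puncturing at the support positions where all of $D$ vanishes, $D'$ behaves like an $(r-1)$-dimensional code, and that the minimum distance of the ambient code restricted appropriately is at least $\lceil d(C)/q\rceil$. This gives the recursion $d_r(C)\ge d(C)+d_{r-1}(C')$ where $d(C')\ge\lceil d(C)/q\rceil$, and iterating yields $d_r(C)\ge\sum_{j=0}^{r-1}\lceil d(C)/q^j\rceil$.

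The main obstacle, and the step I would expect to require the most care, is justifying the factor-of-$q$ drop in the minimum distance at each stage, i.e.\ establishing that after shortening, the relevant minimum distance is at least $\lceil d(C)/q\rceil$ rather than merely $d(C)/q$. The clean way to see this is via Lemma~\ref{norm_formula}: for an $r$-dimensional subspace $D$, since $\Vert D\Vert=(q^r-q^{r-1})^{-1}\sum_{x\in D}\Vert x\Vert$ and every nonzero $x$ has $\Vert x\Vert\ge d(C)$, one gets the averaging estimate, and the ceiling arises because norms are integers. I would therefore prefer to prove the corollary by combining Lemma~\ref{norm_formula} with a direct count: for each coordinate $i\in\supp(D)$, the number of $x\in D$ with $x_i\ne0$ is exactly $q^r-q^{r-1}$, so summing $\Vert x\Vert$ over $D$ and comparing with the contribution forced by the minimum-weight condition at successively shortened codes yields the telescoping sum of ceilings. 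The arithmetic of the ceiling functions—ensuring the integer rounding accumulates correctly across the $r$ terms—is the genuinely delicate point, and I would handle it by the induction hypothesis applied to a code of minimum distance $\lceil d(C)/q\rceil$, using the elementary identity $\lceil\lceil a/q\rceil/q^{j}\rceil=\lceil a/q^{j+1}\rceil$.
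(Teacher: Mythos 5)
Your overall route --- pass to a subspace realizing $d_r(C)$, shorten/take residuals, and induct using the identity $\lceil\lceil a/q\rceil/q^{j}\rceil=\lceil a/q^{j+1}\rceil$ --- is the classical Griesmer-style proof, and it is genuinely different from the paper's: the paper simply inserts $e(C)\ge d(C)$ into Theorem~\ref{thm:eC} to get $d_r(C)\ge d(C)(q^r-1)/(q^r-q^{r-1})=\sum_{j=0}^{r-1}d(C)/q^{j}$ and then rounds term by term. However, your justification of the one step you correctly single out as delicate does not work. You propose to obtain the factor-of-$q$ drop with its ceiling ``via Lemma~\ref{norm_formula}\dots and the ceiling arises because norms are integers.'' Averaging over the full $r$-dimensional subspace $D$ gives only the fractional bound $\sum_{j}d(C)/q^{j}$; integrality of $\Vert D\Vert$ then yields $\lceil\sum_{j}d(C)/q^{j}\rceil$, and the ceiling of a sum is in general \emph{smaller} than the sum of the ceilings, so this does not recover $\sum_{j}\lceil d(C)/q^{j}\rceil$. (Read literally, the paper's own last inequality has the same term-by-term rounding issue; it is harmless in the paper's applications only because there $d=q^{\delta}$ and the relevant terms are integers.) What your recursion $d_r(C)\ge d(C)+d_{r-1}(C')$ with $d(C')\ge\lceil d(C)/q\rceil$ actually requires is the residual-code lemma: if $x$ is a word of minimum weight $w$ in $D$ restricted to its support, then every $y\in D\setminus\langle x\rangle$ has weight at least $\lceil w/q\rceil$ outside $\supp(x)$. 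This needs a pigeonhole argument on the values $y_i/x_i\in\Fq$ for $i\in\supp(x)$, or, if you want to stay with Lemma~\ref{norm_formula}, an application of it to the \emph{two-dimensional} space $\langle x,y\rangle$ (giving $\Vert\langle x,y\rangle\Vert\ge w+w/q$, hence residual weight $\Vert\langle x,y\rangle\Vert-w\ge w/q$, and \emph{this single integer} may be rounded up). One must also verify that projection away from $\supp(x)$ has kernel exactly $\langle x\rangle$, so the residual code really has dimension $r-1$. None of this is in your sketch, so the key inequality is unsupported as written.

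A separate, concrete error: your auxiliary claim $\Delta_r(C)\le q^{r-1}-1$ is false. The paper's Theorem~\ref{uptomuanddual} exhibits, for $r\le\mu$, $r$-dimensional subspaces of $C(\ell,m)$ in which \emph{every} nonzero codeword has minimum weight, so there $\Delta_r(C)=q^{r}-1$; such constant-weight subspaces are precisely what make the Griesmer--Wei bound tight. Fortunately the claim is never used --- once $e(C)\ge d(C)$ is substituted, $\Delta_r(C)$ cancels out of Theorem~\ref{thm:eC} entirely --- but it should be deleted.
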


\begin{proof} 
Using Theorem \ref{thm:eC} and the fact that $e(C) \ge d(C)$, we see that
$$
d_r(C) \ge \frac{d(C) (q^r - 1)}{q^r- q^{r-1}} = \sum_{i=0}^{r-1} \frac{d(C) q^i}{q^{r-1}} = \sum_{j=0}^{r-1} \frac{d(C)}{q^j} 
\ge \sum_{j=0}^{r-1} \left\lceil \frac{d(C)}{q^j} \right\rceil 
$$
for any integer $r$ with $1\le r\le k$. 
\end{proof}

\section{The Grassmann Code $C(\ell ,m)$}
\label{sec:clm}

Let us fix, throughout this section, a prime power $q$ and integers $\ell, m$ with $1\le \ell \le m$, and let 
$$
 n:= 
 {{m}\brack{\ell}}_q, \quad k := {{m}\choose{\ell}}, \quad \mbox{and} \quad \delta:=\ell(m-\ell),
$$
where ${{m}\brack{\ell}}_q$ is the Gaussian binomial coefficient, which was defined in Section \ref{sec:intro}. 
It may be remarked that ${{m}\brack{\ell}}_q$ is a polynomial in $q$ of degree $\delta$ with positive integral coefficients. 
The Grassmann code $C(\ell,m)$ is the linear $[n,k]_q$-code associated to the projective system corresponding to the Pl\"ucker embedding of the $\Fq$-rational points of the Grassmannian $G_{\ell,m}$ in $\PP^{k-1}_{\Fq} = \PP\big(\bigwedge^{\ell}\Fq^m\big)$; see, for example, \cite{N,GL} 
for greater details. Alternatively, $C(\ell,m)$ may be defined as follows. 

Let $V:=\Fq^m$. Fix a basis $\{e_1, \dots, e_m\}$ of $V$. Then we can, and will, fix a corresponding basis of $\bigwedge^{\ell} V$ given, in the notations of Section \ref{sec:clm}, by  $\{e_{\alpha}: \alpha\in I(\ell,m)\}$. 
Let $G_{\ell,m}= G_{\ell,m}(\Fq)$ be the Grassmann variety consisting of 
all $\ell$-dimensional subspaces of $V$. The Pl\"ucker embedding 
$G_{\ell,m}\hookrightarrow \PP\big(\bigwedge^{\ell}V \big)$ simply 
maps a $\ell$-dimensional subspace of $V$ spanned by 
$v_1, \dots , v_{\ell}$ to the point of 
$\PP\big(\bigwedge^{\ell} V \big)$ corresponding to 
\mbox{$v_1\wedge \cdots \wedge v_{\ell}$}. It is well-known that this embedding is well defined and nondegenerate. 
Fix representatives $\omega'_1, \dots , \omega'_n$ in $\bigwedge^{\ell} V$ corresponding to distinct points of $G_{\ell,m}(\Fq)$. We denote the subset $\{\omega'_1, \dots , \omega'_n\}$ of $\bigwedge^{\ell} V$ by $T(\ell,m)$. Having fixed a basis of $V$, we can identify each element of $\bigwedge^{m}V$ with a unique scalar in $\Fq$. With this in view, we obtain a linear map 
$$
\tau: {\textstyle{\bigwedge}}^{m-\ell}V\rightarrow \Fq^n \quad \mbox{ given by } \quad  \tau(\omega): = \left(\omega'_1\wedge\omega, \,  \omega'_2\wedge\omega, \, 
\dots, \, \omega'_n\wedge\omega\right).
$$
Since the Pl\"ucker embedding is nondegenerate, it follows that $\tau$ is injective. The Grassmann code $C(\ell, m)$ is defined as the image of the 
map $\tau$. It is clear that $C(\ell, m)$ is a linear $[n,k]_q$-code. 
Given any codeword $c\in C(\ell, m)$, there is unique $\omega \in \bigwedge^{m-\ell}V$ such that $\tau(\omega)=c$; we denote this $\omega$ by $\omega_c$. 

Given any  subspace $\E$ of $\bigwedge^{\ell} V$, we let $g(\E):= |\E\cap T(\ell,m)|$. Note that since $T(\ell,m)$ consists of nonzero elements, no two of which are proportional to each other, we always have 
\begin{equation}
\label{bdforgE}
\left|g(\E)\right|\le \frac{q^r-1}{q-1} \quad \mbox{ for any 
subspace $\E$ of $\bigwedge^{\ell} V$ with } \dim \E =r.
\end{equation}
Given any integer $s$ with $1\le s\le k$, we let
$$
g_s(\ell,m): = \max\left\{g(\E) : \E \mbox{ a subspace of $\bigwedge^{\ell} V$ of codimension } s\right\}.
$$
Note that as a consequence of \eqref{bdforgE}, we have
\begin{equation}
\label{bdforgslm}
g_s(\ell,m) \le \frac{q^r-1}{q-1} \quad \mbox{ where } \quad r:=k-s.
\end{equation}

\begin{lemma}
\label{NminusG}
Let $D$ be a 
subspace of $C(\ell, m)$ and $s=\dim D$. If $\D := \tau^{-1}(D)$, then  
$\E:=\D^{\perp}:=\{\omega'\in \bigwedge^{\ell} V : \omega'\wedge\omega=0 \}$ 
is a subspace of $\bigwedge^{\ell} V$ of codimension $s$ and 
$$\Vert D \Vert = n - g(\E).$$
\end{lemma}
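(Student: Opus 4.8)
The plan is to unwind the definitions of $\D$, $\E$, and the norm $\Vert D\Vert$, and show that the support of $D$ is exactly the set of points of $G_{\ell,m}(\Fq)$ \emph{not} lying on the annihilator subspace $\E$. I would split the statement into two assertions: first that $\codim \E = s$, and second the norm formula $\Vert D\Vert = n - g(\E)$.

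\emph{First step: the codimension claim.} Since $\tau$ is injective (as noted after its definition, the Pl\"ucker embedding being nondegenerate), $\D=\tau^{-1}(D)$ is a subspace of $\bigwedge^{m-\ell}V$ with $\dim \D = \dim D = s$. The key point is that the pairing $(\omega',\omega)\mapsto \omega'\wedge\omega$ from $\bigwedge^{\ell}V \times \bigwedge^{m-\ell}V \to \bigwedge^m V \cong \Fq$ is a \emph{nondegenerate} bilinear form, since $\{e_{\alpha}\}$ and $\{e_{\alpha^c}\}$ are dual-up-to-sign bases under wedge (this is essentially the computation underlying the Hodge star in Section \ref{sec:duality}). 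Therefore $\E = \D^{\perp}$ is the annihilator of an $s$-dimensional subspace under a perfect pairing, whence $\codim \E = \dim \D = s$, i.e. $\dim \E = k - s$.

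\emph{Second step: the norm formula.} I would compute $\supp(D)$ directly. By definition, the $i^{\text{th}}$ coordinate of a codeword $\tau(\omega)$ is $\omega'_i\wedge\omega$. Hence $i \in \supp(D)$ if and only if there exists $\omega\in\D$ with $\omega'_i\wedge\omega \ne 0$, which is precisely the statement that $\omega'_i \notin \D^{\perp} = \E$. Equivalently, $i \notin \supp(D)$ iff $\omega'_i\wedge\omega = 0$ for \emph{every} $\omega\in\D$, i.e. iff $\omega'_i\in\E$. Since $T(\ell,m)=\{\omega'_1,\dots,\omega'_n\}$ indexes the $n$ coordinates, the coordinates lying outside the support are in bijection with $\E\cap T(\ell,m)$, a set of cardinality $g(\E)$. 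Therefore
$$
\Vert D\Vert = |\supp(D)| = n - |\E\cap T(\ell,m)| = n - g(\E),
$$
as claimed.

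\emph{Main obstacle.} The one point requiring genuine care, rather than mere bookkeeping, is the perfect-pairing claim in the first step: one must verify that $\omega'\mapsto(\omega\mapsto\omega'\wedge\omega)$ identifies $\bigwedge^{\ell}V$ with the dual of $\bigwedge^{m-\ell}V$, so that taking annihilators preserves complementary dimensions. This is standard but should be stated explicitly, as it is exactly what forces $\codim\E = s$ (and not merely $\codim\E\le s$); alternatively, one may cite the nondegeneracy of the Pl\"ucker embedding together with the injectivity of $\tau$ to sidestep a direct basis computation. Everything else is a transparent translation between the wedge-annihilator condition $\omega'\in\E$ and the vanishing of coordinates.
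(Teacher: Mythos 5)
Your proposal is correct and follows essentially the same route as the paper's own proof: both establish $\dim\D = s$ via the injectivity of $\tau$, deduce $\codim\E = s$ from the nondegeneracy of the wedge pairing $\bigwedge^{\ell}V\times\bigwedge^{m-\ell}V\to\Fq$, and then identify the coordinates outside $\supp(D)$ with the elements of $\E\cap T(\ell,m)$. The only difference is that you spell out the perfect-pairing justification (dual-up-to-sign bases) which the paper simply asserts; this is a harmless elaboration, not a different argument.
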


\begin{proof}
Since $\tau$ is an isomorphism of $ \bigwedge^{m-\ell} V$ and $C(\ell,m)$, we have $\dim \D = s$. Also, since 
$(\omega', \omega) \mapsto \omega'\wedge\omega$ gives a nondegenerate bilinear map of $\bigwedge^{\ell} V \times \bigwedge^{m-\ell} V \to \Fq$, 
and  so $\E:=\D^{\perp}$ is a subspace $\bigwedge^{\ell} V$ 
of codimension $s$.  
For $1\le i\le n$, we have 
$$
i\not\in \supp (D) \Longleftrightarrow \omega'_i\wedge \omega = 0 \mbox{ for all } \omega \in \D \Longleftrightarrow \omega'_i\in \E.
$$
It follows that $\Vert D \Vert = n - g(\E).$
\end{proof}

\begin{corollary}
\label{dsNG}
$d_s\left(C(\ell, m)\right) = n - g_s(\ell, m)$ for  $s=1, \dots ,k$.
\end{corollary}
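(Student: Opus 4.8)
The plan is to deduce the corollary almost directly from Lemma \ref{NminusG}, the only additional ingredient being the observation that the assignment $D \mapsto \E$ appearing there sets up a bijection between the $s$-dimensional subspaces of $C(\ell,m)$ and the codimension-$s$ subspaces of $\bigwedge^{\ell}V$.

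First I would start from the definition
$$
d_s(C(\ell,m)) = \min\left\{\Vert D \Vert : D \mbox{ a subspace of } C(\ell,m) \mbox{ with } \dim D = s\right\}
$$
and rewrite each term using Lemma \ref{NminusG}. For every such $D$, that lemma produces $\E = (\tau^{-1}(D))^{\perp}$, a subspace of $\bigwedge^{\ell}V$ of codimension $s$, with $\Vert D \Vert = n - g(\E)$. Hence minimizing $\Vert D \Vert$ over $D$ amounts to maximizing $g(\E)$ over the corresponding $\E$, giving $d_s(C(\ell,m)) = n - \max g(\E)$, where the maximum is taken over the subspaces $\E$ arising from $s$-dimensional subspaces $D$ of $C(\ell,m)$.

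Next I would verify that as $D$ ranges over all $s$-dimensional subspaces of $C(\ell,m)$, the associated $\E$ ranges over precisely all codimension-$s$ subspaces of $\bigwedge^{\ell}V$. The map $D \mapsto \tau^{-1}(D)$ is a bijection from the $s$-dimensional subspaces of $C(\ell,m)$ onto the $s$-dimensional subspaces of $\bigwedge^{m-\ell}V$, since $\tau$ is a vector space isomorphism. The map $\D \mapsto \D^{\perp}$ is a bijection from the $s$-dimensional subspaces of $\bigwedge^{m-\ell}V$ onto the codimension-$s$ subspaces of $\bigwedge^{\ell}V$, because the pairing $(\omega', \omega) \mapsto \omega'\wedge\omega$ is nondegenerate and $\dim \bigwedge^{\ell}V = \dim \bigwedge^{m-\ell}V = k$. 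Composing these two bijections yields the desired correspondence $D \mapsto \E$, so that $\max g(\E) = g_s(\ell,m)$ by the very definition of $g_s(\ell,m)$, and the formula $d_s(C(\ell,m)) = n - g_s(\ell,m)$ follows.

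The content here is entirely routine, as the substantive work was already carried out in Lemma \ref{NminusG}. The only point requiring a moment's care is checking that the correspondence $D \mapsto \E$ is \emph{surjective} onto all codimension-$s$ subspaces $\E$, so that the maximum defining $g_s(\ell,m)$ is genuinely realized by some subspace $D$ of $C(\ell,m)$; this is immediate from the nondegeneracy of the wedge pairing together with the bijectivity of $\tau$, both of which were noted in the proof of Lemma \ref{NminusG}.
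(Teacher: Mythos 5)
Your proposal is correct and follows essentially the same route as the paper: the paper's proof simply notes that $\E \mapsto \tau(\E^{\perp})$ is a one-to-one correspondence between codimension-$s$ subspaces of $\bigwedge^{\ell}V$ and $s$-dimensional subspaces of $C(\ell,m)$, and then invokes Lemma \ref{NminusG}, which is exactly your bijection read in the opposite direction. Your write-up just makes explicit the surjectivity check that the paper leaves as ``clearly.''
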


\begin{proof}
Clearly,  $\E\mapsto \tau(\E^{\perp})$ sets up a one-to-one correspondence between subspaces of $\bigwedge^{\ell} V$ of codimension $s$ and subspaces of $C(\ell,m)$ of dimension $s$. Hence the desired result follows from Lemma \ref{NminusG}.
\end{proof}

We now recall some important results of Nogin \cite{N}. Combining Theorem 4.1, Proposition 4.4 and Corollary 4.5 of \cite{N}, we have the following.

\begin{proposition}
\label{noginminwt}
The minimum distance of $C(\ell, m)$ is $q^{\delta}$ and the codewords $c$ 
of $C(\ell,m)$ such that $\omega_c$ is   decomposable attain the minimum weight $q^{\delta}$. 
Moreover, the number of minimum weight codewords in $C(\ell,m)$ is $(q-1)n$. 
\end{proposition}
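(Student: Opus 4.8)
The plan is to push everything through the map $\tau$ and reduce to counting $\Fq$-points of $G_{\ell,m}$. For a nonzero $\omega\in\bigwedge^{m-\ell}V$ the codeword $c=\tau(\omega)$ satisfies
$$
\Vert c\Vert = \left|\{\,W\in G_{\ell,m}(\Fq) : \omega'_W\wedge\omega\neq 0\,\}\right| = n-\left|\{\,W : \omega'_W\wedge\omega=0\,\}\right|,
$$
where $\omega'_W=v_1\wedge\cdots\wedge v_\ell$ is a Pl\"ucker representative of $W=\vspan\{v_1,\dots,v_\ell\}$. First I would treat the decomposable case: say $\omega=w_1\wedge\cdots\wedge w_{m-\ell}$ with $U=\vspan\{w_1,\dots,w_{m-\ell}\}$ of dimension $m-\ell$. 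Then $\omega'_W\wedge\omega\in\bigwedge^m V\cong\Fq$ is a determinant that is nonzero precisely when $\{v_1,\dots,v_\ell,w_1,\dots,w_{m-\ell}\}$ is a basis of $V$, i.e.\ exactly when $W\cap U=0$. Since $\dim W+\dim U=m$, the relevant $W$ are the complements of $U$; these are parametrized by $\operatorname{Hom}(V/U,U)$, so there are exactly $q^{\ell(m-\ell)}=q^{\delta}$ of them. Hence every decomposable $\omega$ gives $\Vert\tau(\omega)\Vert=q^{\delta}$, which already shows $d(C(\ell,m))\le q^{\delta}$ and exhibits the candidate minimum-weight codewords.

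Next, for the lower bound $\Vert\tau(\omega)\Vert\ge q^{\delta}$ for all $\omega\neq 0$, together with the sharper claim that equality forces $\omega$ to be decomposable, I would induct on $m$. Since the Hodge star $\hodge$ carries decomposables to decomposables and induces an equivalence $C(\ell,m)\cong C(m-\ell,m)$, I may assume $\ell\le m-\ell$. The base case $\ell=1$ (and dually $\ell=m-1$) is immediate from Corollary \ref{corlem1}: there every nonzero element of $\bigwedge^{m-1}V$ is decomposable, so $C(1,m)$ is a constant-weight code with all nonzero weights equal to $q^{m-1}=q^{\delta}$, and the uniqueness assertion is vacuous. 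For the inductive step, fix a hyperplane $H\subset V$ and a vector $e\notin H$, and split $\omega=\omega_0+e\wedge\omega_1$ with $\omega_0\in\bigwedge^{m-\ell}H$ and $\omega_1\in\bigwedge^{m-\ell-1}H$. Splitting the points $W\in G_{\ell,m}(\Fq)$ according to whether $W\subseteq H$ or not, a short computation in $\bigwedge^m V\cong\bigwedge^{m-1}H\wedge e$ shows that the contribution of the subspaces $W\subseteq H$ to $\Vert\tau(\omega)\Vert$ is exactly the weight of the codeword of $C(\ell,m-1)$ attached to $\omega_1$, while the subspaces $W\not\subseteq H$ (each meeting $H$ in an $(\ell-1)$-plane) contribute via a count governed by $C(\ell-1,m-1)$ applied to the shifted vectors $\omega_0+h\wedge\omega_1$. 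Applying the induction hypothesis to these two smaller Grassmann codes and adding the estimates should yield $\Vert\tau(\omega)\Vert\ge q^{\delta}$.

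The main obstacle is precisely this inductive step: carefully bookkeeping the $W\not\subseteq H$ contribution (these subspaces fiber over $G_{\ell-1,m-1}(\Fq)$ via $W\mapsto W\cap H$, with the pairing vector depending on the affine direction $h$), and then pushing the argument far enough to characterize equality, i.e.\ to show that $\Vert\tau(\omega)\Vert=q^{\delta}$ forces both $\omega_1$ decomposable and the two contributions to fit together only in the decomposable configuration. Once equality is shown to hold exactly for decomposable $\omega$, the final count follows cleanly: the minimum-weight codewords are precisely the $c\,\tau(\omega)$ with $\omega$ decomposable and $c\in\Fq^{*}$; the decomposable lines in $\bigwedge^{m-\ell}V$ correspond bijectively to $G_{m-\ell,m}(\Fq)$, of which there are ${{m}\brack{m-\ell}}_q={{m}\brack{\ell}}_q=n$, and since $\tau$ is injective and linear each such line yields $q-1$ distinct codewords of weight $q^{\delta}$. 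Hence there are exactly $(q-1)n$ minimum-weight codewords.
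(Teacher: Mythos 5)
First, a point of context: the paper itself does not prove this proposition --- it is quoted from Nogin, combining Theorem 4.1, Proposition 4.4 and Corollary 4.5 of \cite{N} --- so your attempt has to stand as an independent proof rather than as a variant of an argument in the paper. The first half of what you write is correct and complete: for decomposable $\omega$ with associated $(m-\ell)$-dimensional space $U$, the points $W\in G_{\ell,m}(\Fq)$ with $\omega'_W\wedge\omega\neq 0$ are exactly the complements of $U$ in $V$, there are $q^{\ell(m-\ell)}=q^{\delta}$ of them, and this gives $d(C(\ell,m))\le q^{\delta}$ together with the fact that codewords coming from decomposable vectors have weight exactly $q^{\delta}$.

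Everything else, however, is a plan rather than a proof, and what is missing is precisely the mathematical content of the proposition. Both the lower bound $\Vert\tau(\omega)\Vert\ge q^{\delta}$ for \emph{every} nonzero $\omega$ and the claim that equality forces $\omega$ to be decomposable --- on which your final count of $(q-1)n$ minimum-weight codewords entirely depends --- are delegated to an induction whose crucial step you yourself describe as ``the main obstacle'' and do not carry out; ``should yield'' is not an argument. The skeleton you set up can in fact be completed: the stratum $W\not\subseteq H$ contributes $q^{-(\ell-1)}\sum_{h\in H}\Vert\tau'(\omega_0+h\wedge\omega_1)\Vert$, where $\tau'$ is the evaluation map of $C(\ell-1,m-1)$; the set of $h$ with $\omega_0+h\wedge\omega_1=0$ is empty or a coset of $V_{\omega_1}\cap H$, hence of size at most $q^{m-\ell-1}$ by Lemma \ref{elemlemma1}; the two strata then give at least $q^{\delta-\ell}$ and $q^{\delta}-q^{\delta-\ell}$ when $\omega_1\neq 0$, and the second alone gives at least $q^{\delta}$ when $\omega_1=0$. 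For the equality case one must further argue that equality forces $\omega_1$ to be decomposable (induction), forces some $h_0$ with $\omega_0+h_0\wedge\omega_1=0$ to exist (otherwise the second stratum already contributes $q^{\delta}$ and the first is positive), and hence $\omega=(e-h_0)\wedge\omega_1$ is decomposable. None of this bookkeeping or equality analysis appears in your write-up, so as it stands neither the minimum distance nor the enumeration $(q-1)n$ is established. Note also that the paper's logic runs in the opposite direction from yours: it imports the count $(q-1)n$ from Nogin's weight-spectrum computation and \emph{deduces} the characterization ``$\Vert c\Vert=q^{\delta}$ iff $\omega_c$ decomposable'' (Corollary \ref{cornogin}) from that count, whereas your route needs the characterization first in order to obtain the count.
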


A useful consequence is the following.

\begin{corollary}
\label{cornogin}
Given any $c\in C(\ell, m)$, we have
$$
\Vert c\Vert = q^{\delta} \Longleftrightarrow \omega_c \mbox{ is   decomposable.} 
$$
Moreover $\Delta(C(\ell,m)) = (q-1)n$.
\end{corollary}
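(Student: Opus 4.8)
The plan is to derive the corollary from Proposition~\ref{noginminwt} by comparing two cardinalities. The second assertion is immediate: taking $D=C(\ell,m)$ in the definition of $\Delta$, the quantity $\Delta\left(C(\ell,m)\right)$ is by definition the number of codewords of minimum weight $d\left(C(\ell,m)\right)=q^{\delta}$, and Proposition~\ref{noginminwt} states that this number equals $(q-1)n$.

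For the equivalence, the implication $\Longleftarrow$ is precisely the second statement of Proposition~\ref{noginminwt}, which asserts that a codeword $c$ with $\omega_c$ decomposable has weight $q^{\delta}$. It therefore remains to prove $\Longrightarrow$, and for this I would count the codewords $c$ for which $\omega_c$ is decomposable. Since $\tau$ is a bijection of $\bigwedge^{m-\ell}V$ onto $C(\ell,m)$, these codewords are in one-to-one correspondence with the decomposable elements of $\bigwedge^{m-\ell}V$. By Lemma~\ref{elemlemma1} (applied with $m-\ell$ in place of $\ell$), every decomposable element of $\bigwedge^{m-\ell}V$ is a nonzero scalar multiple of the Pl\"ucker image of a unique $(m-\ell)$-dimensional subspace of $V$; hence the number of such elements is $(q-1)\left|G_{m-\ell,m}(\Fq)\right|=(q-1){{m}\brack{m-\ell}}_q$.

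The key step is then to invoke the symmetry ${{m}\brack{m-\ell}}_q={{m}\brack{\ell}}_q=n$ of the Gaussian binomial coefficient, which shows that there are exactly $(q-1)n$ codewords $c$ with $\omega_c$ decomposable. By the implication $\Longleftarrow$ already noted, each of these has weight $q^{\delta}$, so they all lie in the set of minimum weight codewords; and since Proposition~\ref{noginminwt} tells us that this latter set also has exactly $(q-1)n$ elements, the two sets must coincide. This yields $\Longrightarrow$ and completes the proof of the equivalence. The argument is purely a matching of cardinalities, so no serious obstacle is expected once Proposition~\ref{noginminwt} is in hand; the only points requiring mild care are the bookkeeping of the $q-1$ scalar multiples per projective point and the use of the symmetry of ${{m}\brack{\ell}}_q$.
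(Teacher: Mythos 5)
Your proof is correct and takes essentially the same approach as the paper: both deduce $\Longleftarrow$ and the value of $\Delta\left(C(\ell,m)\right)$ directly from Proposition~\ref{noginminwt}, and both obtain $\Longrightarrow$ by the same cardinality-matching argument, showing that the $(q-1)n$ codewords with $\omega_c$ decomposable exhaust the $(q-1)n$ minimum weight codewords. The only cosmetic difference is that you count the decomposable elements of $\bigwedge^{m-\ell}V$ via $(q-1)\left|G_{m-\ell,m}(\Fq)\right|$ and the symmetry ${{m}\brack{m-\ell}}_q={{m}\brack{\ell}}_q$, whereas the paper equates this count with the number of decomposable elements of $\bigwedge^{\ell}V$ (via the Hodge star correspondence).
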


\begin{proof}
The implication $\Longleftarrow$ follows from Proposition \ref{noginminwt}.
The other implication also follows 
from Proposition \ref{noginminwt} by noting that 
the number of   decomposable elements of $\bigwedge^{m-\ell}V$ is equal to
the number of   decomposable elements of $\bigwedge^{\ell}V$, 
and that the latter is equal to $(q-1)n$.
\end{proof}

In \cite{N}, Nogin goes on to determine some of the higher weights of $C(\ell, m)$ using Proposition \ref{noginminwt} and some additional work. More precisely, he proves formula \eqref{uptomu} in Section \ref{sec:intro}. As remarked in Section \ref{sec:intro},  Introduction, 
alternative proofs of \eqref{uptomu} are given in \cite{GL} as well as \cite{HJR}. The latter also proves the 
dual version \eqref{dualweights}. We give below yet another proof of 
\eqref{uptomu} and \eqref{dualweights} as an application 
of Theorem \ref{structure} and Corollary \ref{cornogin}. 

\begin{theorem}
\label{uptomuanddual}
Let $\mu:= \max \{\ell, m - \ell \} + 1$. Then for $0\le r \le \mu$ we have
$$
d_r\left( C(\ell ,m)\right) = q^{\d} + q^{\d - 1} +\dots +   q^{\d - r +1}
\quad {\rm and } \quad 
d_{k-r}\left( C(\ell ,m)\right) = n -( 1+ q + \cdots + q^{r -1}).
$$
\end{theorem}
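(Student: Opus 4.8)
The plan is to prove the two formulae separately, in each case reducing the computation to the \emph{existence} of a decomposable subspace of dimension $r$, which is guaranteed for $r\le \mu$ by Corollary \ref{cor:struturethm}. The one structural fact I will lean on throughout is that $\mu=\max\{\ell,m-\ell\}+1$ is symmetric under $\ell\leftrightarrow m-\ell$, so that Corollary \ref{cor:struturethm} applies verbatim both to $\bigwedge^{\ell}V$ and to $\bigwedge^{m-\ell}V$. The cases $r=0$ are the stated conventions $d_0=0$ and $d_k=n$ (empty sums), so I assume $r\ge 1$.

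For the dual weights, I would start from Corollary \ref{dsNG}, which gives $d_{k-r}(C(\ell,m))=n-g_{k-r}(\ell,m)$, so it suffices to show $g_{k-r}(\ell,m)=1+q+\cdots+q^{r-1}$. Since a subspace of codimension $k-r$ has dimension $r$, the bound \eqref{bdforgslm} yields $g_{k-r}(\ell,m)\le (q^r-1)/(q-1)=1+q+\cdots+q^{r-1}$. To see that equality holds when $r\le\mu$, I take $\E$ to be a decomposable subspace of $\bigwedge^{\ell}V$ of dimension $r$, which exists by Corollary \ref{cor:struturethm}. Then every projective point of $\PP(\E)$ is decomposable, hence lies on $G_{\ell,m}$, so for each of the $(q^r-1)/(q-1)$ points of $\PP(\E)$ its representative in $T(\ell,m)$ belongs to $\E$. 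Thus $g(\E)=(q^r-1)/(q-1)$ and the upper bound is attained, giving the dual formula.

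For the primal weights I would exhibit an explicit optimal subcode. The lower bound $d_r\ge q^{\d}+q^{\d-1}+\cdots+q^{\d-r+1}$ is exactly \eqref{GWforGrass}, whose ceilings are exact in this range because $r-1\le\mu-1\le\d$ (using $\d=\ell(m-\ell)\ge\max\{\ell,m-\ell\}$). For the matching upper bound I choose a decomposable subspace $\D$ of $\bigwedge^{m-\ell}V$ of dimension $r$, which again exists for $r\le\mu$ by Corollary \ref{cor:struturethm}. Since $\tau$ is a linear isomorphism onto $C(\ell,m)$, the image $D:=\tau(\D)$ is an $r$-dimensional subcode, and by Corollary \ref{cornogin} each of its $q^r-1$ nonzero codewords $c$ satisfies $\Vert c\Vert=q^{\d}$ because $\omega_c$ is decomposable. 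Lemma \ref{norm_formula} then gives
$$\Vert D \Vert = \frac{1}{q^r - q^{r-1}} \sum_{x \in D} \Vert x \Vert = \frac{(q^r-1)q^{\d}}{q^{r-1}(q-1)} = q^{\d - r + 1}\bigl(1 + q + \cdots + q^{r-1}\bigr) = q^{\d} + q^{\d - 1} + \cdots + q^{\d - r + 1},$$
so $d_r(C(\ell,m))\le \Vert D\Vert$ meets the Griesmer--Wei lower bound and equality holds.

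I do not expect a serious obstacle, since the two computations are routine once the decomposable subspaces are produced; the content of the argument is really Corollary \ref{cor:struturethm}. The two points requiring care are (i) recognizing that the primal formula needs a decomposable subspace living in $\bigwedge^{m-\ell}V$ rather than in $\bigwedge^{\ell}V$ (equivalently, one may transport a decomposable subspace of $\bigwedge^{\ell}V$ across the Hodge star isomorphism $\hodge$ of Section \ref{sec:duality}, which preserves decomposability), and (ii) checking that the ceilings in \eqref{GWforGrass} are exact throughout $1\le r\le\mu$.
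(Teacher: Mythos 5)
Your proposal is correct and follows essentially the same route as the paper: both arguments produce a decomposable $r$-dimensional subspace via Corollary \ref{cor:struturethm}, push it into the code (the paper via $\hodge$ applied to a subspace of $\bigwedge^{\ell}V$, you directly in $\bigwedge^{m-\ell}V$, which you rightly note is equivalent), invoke Corollary \ref{cornogin} and Lemma \ref{norm_formula} to show the Griesmer--Wei bound \eqref{GWforGrass} is attained, and for the dual formula compute $g(\E)=(q^r-1)/(q-1)$ and combine \eqref{bdforgslm} with Corollary \ref{dsNG}. Your added check that the ceilings in the Griesmer--Wei bound are exact for $r\le\mu$ is a small point the paper leaves implicit, but it does not change the substance of the argument.
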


\begin{proof}
The case $r=0$ is trivial. Assume that $1\le r\le \mu$. By Corollary \ref{cor:struturethm}, there is a decomposable subspace $E$ of $\bigwedge^{\ell}V$ of dimension $r$. Then $\hodge (E)$ is a   decomposable subspace of $\bigwedge^{m-\ell} V$ and hence by Corollary \ref{cornogin}, $D:=\tau(\hodge(E))$ is a $r$-dimensional subspace of $C(\ell,m)$ in which every nonzero vector is of minimal weight. Consequently, 
by Lemma \ref{norm_formula}, we have 
$$
\Vert D \Vert =  \frac{1}{q^r - q^{r-1}} \sum_{c\in D} \Vert c \Vert = 
 \frac{d\left( C(\ell ,m)\right) (q^r-1)}{q^r - q^{r-1}} = \sum_{j=0}^{r-1}  \frac{d\left( C(\ell ,m)\right) }{q^j} = \sum_{j=0}^{r-1}  q^{\d - j} .
$$
In other words, the Griesmer-Wei bound is attained. This proves the desired formula for $d_r\left( C(\ell ,m)\right)$.  
Next, $E$ is a subspace of $\bigwedge^{\ell}V$ of codimension $k-r$, and since $E$ is   decomposable, every $\omega'\in E$ with $\omega' \ne 0$ can be uniquely written as $\omega'=\lambda\omega'_i$ where $\lambda\in \Fq\setminus\{0\}$ and $i\in \{1,\dots , n\}$. It follows that $g(E) = (q^r-1)/(q-1) = 1+q + \cdots + q^{r-1}$, and so, in view of 
\eqref{bdforgslm}, we find $g_{k-r}(\ell,m) = 1+q + \cdots + q^{r-1}$. This,  together with Corollary \ref{dsNG}, yields the desired formula for $d_{k-r}\left( C(\ell ,m)\right)$.
\end{proof}

\section{Higher Weights of the Grassmann Code $C(2,m)$}
\label{sec:c2m}

The results  on the higher weights of $C(2,m)$ mentioned in the Introduction  will be proved in this section. 
Throughout, let $q,\ell,m,k,n,\delta$ be as in Section \ref{sec:clm}, except we set $\ell =2$. Also, we let $F:=\Fq$ and $V:=\Fq^m$. Note that the complete weight hierarchy of $C(2,m)$ is 
easily obtained from Theorem \ref{uptomuanddual} if $m\le 4$. With this in view, we shall assume that $m>4$. In particular, $\mu:=\max\{\ell,m-\ell\}+1 = m-1$.

We begin by recalling a result of Nogin  concerning the spectrum of 
$C(2,m)$. To this end, given any nonnegative integer $t$, let $N(m,2t)$ denote the number of skew-symmetric bilinear forms of rank 
$2t$ on $\Fq^m$. We know from \cite[\S 15.2]{MS} that 
\begin{equation}
\label{nm2t}
N(m,2t)
= \frac{(q^m-1)(q^{m-1}-1)\cdots(q^{m-2t+1}-1)}{(q^{2t}-1)((q^{2t-2}-1)\cdots (q^{2}-1)}q^{t(t-1)}.
\end{equation}
The said result of Nogin \cite[Thm. 5.1]{N} is the following. 

\begin{proposition}
\label{spectrumc2m}
Given any $i\ge 0$, let $A_i:=\left|\left\{c\in C(2,m) : \Vert c\Vert = i\right\}\right|$. 
Then 
\begin{equation}\label{spectrum}
A_i=
\left\{\begin{array}{ll}
N(m,2t) & \mbox{ if } i=\displaystyle q^{2(m-t-1)}\frac{q^{2t}-1}{q^2-1} \mbox{ for } 0\leq t \le \lfloor {m}/{2} \rfloor,\\
0 & \mbox{ otherwise.}
\end{array}\right.
\end{equation}
Moreover, for any $c\in C(2,m)$ and $0\leq t \le \lfloor {m}/{2} \rfloor$, we have
$$
\Vert c\Vert = \displaystyle q^{2(m-t-1)}\frac{q^{2t}-1}{q^2-1} \Longleftrightarrow \rank(\omega_c) = 2t.
$$
\end{proposition}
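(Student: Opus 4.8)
The plan is to reinterpret the weight of a codeword as a count of totally isotropic planes of an associated alternating form, and then to carry out that count. Fix $c\in C(2,m)$ and set $\omega:=\omega_c\in\bigwedge^{m-2}V$ with associated skew-symmetric matrix $A:=\pi(\omega)=A_{\omega}$; by the definition of rank in Section \ref{sec:duality} we have $\rank(\omega_c)=\rank(A)$. The first step is the identity
$$
(u\wedge v)\wedge\omega \;=\; \bfu^{t}A\,\bfv \qquad\text{for all } u,v\in V,
$$
which follows immediately by expanding $u,v$ in the basis $\{e_1,\dots,e_m\}$ and recalling that $(A_{\omega})_{ij}=e_i\wedge e_j\wedge\omega$. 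Since the $i$-th coordinate of $\tau(\omega)$ is $\omega'_i\wedge\omega$ and each representative $\omega'_i=u_i\wedge v_i$ corresponds to a $2$-plane $W_i=\langle u_i,v_i\rangle$, this coordinate vanishes exactly when $\bfu_i^{t}A\,\bfv_i=0$, i.e.\ when $A$ restricts to the zero form on $W_i$ (an alternating form on a plane is determined by its value on any basis). Hence $\Vert c\Vert = n - I(A)$, where $I(A)$ denotes the number of totally isotropic $2$-dimensional subspaces of the alternating form $A$.

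The second step is to evaluate $I(A)$ when $\rank(A)=2t$. Congruent forms clearly have the same number of totally isotropic planes, and every rank-$2t$ alternating form is congruent to a standard one, so $I(A)$ depends only on $t$. I would compute it by counting ordered linearly independent pairs $(u,v)$ with $\bfu^{t}A\,\bfv=0$ and dividing by $(q^2-1)(q^2-q)$, the number of ordered bases of a plane. Writing $R$ for the radical of $A$, of dimension $m-2t$, one distinguishes $u\in R$ (where every $v\notin\langle u\rangle$ is admissible) from $u\notin R$ (where the admissible $v$ fill the hyperplane $\{v:\bfu^{t}A\,\bfv=0\}$, which contains $u$). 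Assembling the two cases gives a closed form that simplifies to $I(A)=n-q^{2(m-t-1)}(q^{2t}-1)/(q^2-1)$, whence $\Vert c\Vert=q^{2(m-t-1)}(q^{2t}-1)/(q^2-1)$, a quantity depending only on $t=\rank(\omega_c)/2$. The extreme case $t=0$ (so $A=0$, every plane isotropic, $\Vert c\Vert=0$) serves as a consistency check.

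The remaining assertions are then formal. Because $\tau^{-1}$ identifies $C(2,m)$ with $\bigwedge^{m-2}V$ and $\pi$ identifies the latter with $B_m$, the assignment $c\mapsto A_{\omega_c}$ is a bijection of $C(2,m)$ onto the space of skew-symmetric $m\times m$ matrices over $\Fq$; consequently the number of codewords with $\rank(\omega_c)=2t$ equals $N(m,2t)$, the number of rank-$2t$ skew-symmetric forms given in \eqref{nm2t}. Since every skew-symmetric matrix has even rank lying in $\{0,2,\dots,2\lfloor m/2\rfloor\}$ and, by Step~2, the weight of $c$ is determined by this rank, we obtain both the formula for $A_i$ and the vanishing $A_i=0$ for all other $i$. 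For the final equivalence, note that $q^{2(m-t-1)}(q^{2t}-1)/(q^2-1)=(q^{2(m-1)}-q^{2(m-t-1)})/(q^2-1)$ is strictly increasing in $t$, so distinct values of $t$ give distinct weights; this injectivity promotes the implication $\rank(\omega_c)=2t\Rightarrow\Vert c\Vert=\cdots$ into the asserted equivalence.

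The main obstacle is the isotropic-plane count of the second step, the sole place requiring real computation: the bookkeeping for vectors inside versus outside the radical, and the simplification of the two cases into the single clean formula, is where slips are most likely. Everything else is a bookkeeping consequence of the isomorphisms $\sigma,\pi$ (Corollary \ref{cor:sigmapirank2}) and the bijection $\tau$.
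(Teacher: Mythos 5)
Your proof is correct, but there is nothing in the paper to compare it against: the paper does not prove Proposition \ref{spectrumc2m} at all, it simply quotes it as Nogin's theorem \cite[Thm. 5.1]{N}, with the count $N(m,2t)$ imported from \cite[\S 15.2]{MS}. So your argument must be judged as a self-contained reconstruction, and as such it succeeds. The identity $(u\wedge v)\wedge\omega=\bfu^{t}A_{\omega}\bfv$ is immediate from the definition of $A_{\omega}$, and since $A_{\omega}$ is alternating, the vanishing of $\bfu_i^{t}A_{\omega}\bfv_i$ depends only on the plane $W_i$ and not on the chosen basis, so indeed $\Vert c\Vert=n-I(A)$. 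Your isotropic-plane count, the step you rightly flag as the delicate one, also checks out; the quickest confirmation is the complementary count: pairs $(u,v)$ with $\bfu^{t}A\bfv\ne 0$ are automatically independent, each non-isotropic plane carries exactly $(q^2-1)(q^2-q)$ such ordered bases, and the number of such pairs is $(q^m-q^{m-2t})(q^m-q^{m-1})$ (choose $u$ outside the radical, then $v$ outside the kernel of the nonzero functional $v\mapsto\bfu^{t}A\bfv$). Hence
$$
\Vert c\Vert \;=\; \frac{(q^m-q^{m-2t})(q^m-q^{m-1})}{(q^2-1)(q^2-q)}
\;=\; \frac{q^{m-2t}(q^{2t}-1)\,q^{m-1}(q-1)}{(q^2-1)\,q\,(q-1)}
\;=\; q^{2(m-t-1)}\frac{q^{2t}-1}{q^2-1},
$$
which agrees with the closed form your two-case computation over the radical produces. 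The concluding reductions are likewise valid: $c\mapsto A_{\omega_c}$ is a bijection of $C(2,m)$ onto the alternating matrices because $\tau$ and $\pi$ are isomorphisms (Corollary \ref{cor:sigmapirank2}), every such matrix has even rank at most $2\lfloor m/2\rfloor$, and $t\mapsto q^{2(m-t-1)}(q^{2t}-1)/(q^2-1)$ is strictly increasing, so weights determine ranks; together these give both \eqref{spectrum} and the stated equivalence. One small point worth making explicit in your write-up: ``skew-symmetric'' must throughout be read as alternating (zero diagonal), which is what $A_{\omega}$ satisfies and what the count \eqref{nm2t} refers to; this matters only in characteristic $2$ and is a wrinkle inherited from the paper's conventions rather than a gap in your argument.
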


\begin{corollary} 
\label{cornoginde}
$d\left(C(2,m)\right) = q^{\delta}$ and 
$e\left(C(2,m)\right) = q^{\delta} +  q^{\delta -2}$.
\end{corollary}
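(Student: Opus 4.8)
The plan is to read off both values directly from Nogin's spectrum formula in Proposition \ref{spectrumc2m}, since the norms of codewords of $C(2,m)$ are completely determined there as functions of the rank $2t$ of $\omega_c$. The quantity $d(C(2,m))$ is the smallest positive norm, while $e(C(2,m))$ is, by its definition in Section \ref{sec:Griesmer}, the second smallest norm (the minimum of the norms strictly exceeding $d$), provided such norms exist. So the whole task reduces to analysing the function
$$
N(t) := q^{2(m-t-1)}\,\frac{q^{2t}-1}{q^2-1}
$$
for $0\le t\le \lfloor m/2\rfloor$, identifying its two smallest positive values, and checking that those correspond to genuinely occurring weights.

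First I would observe that $N(0)=0$, so the zero codeword contributes the trivial norm and is excluded. For $t\ge 1$ the values $N(t)$ are exactly the positive norms that occur, with multiplicities $N(m,2t)>0$, so every such $t$ in range is realized. Next I would compute $N(1)$ and $N(2)$ explicitly. For $t=1$ we get $N(1)=q^{2(m-2)}\cdot\frac{q^2-1}{q^2-1}=q^{2m-4}=q^{\delta}$, using $\delta=2(m-2)=2m-4$ since $\ell=2$. This already matches the claim $d(C(2,m))=q^{\delta}$ and is of course consistent with Proposition \ref{noginminwt}. For $t=2$ one finds
$$
N(2)=q^{2(m-3)}\,\frac{q^{4}-1}{q^2-1}=q^{2m-6}(q^2+1)=q^{2m-4}+q^{2m-6}=q^{\delta}+q^{\delta-2}.
$$
This is the value claimed for $e(C(2,m))$.

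The remaining point — and the only step that needs a genuine (if short) argument — is to confirm that these are indeed the two smallest positive norms, i.e. that $N(t)$ is strictly increasing in $t$ on the relevant range so that no $t\ge 3$ produces a norm smaller than $N(2)$, and that $N(1)$ really is the minimum. I would verify monotonicity by rewriting $N(t)=\frac{1}{q^2-1}\bigl(q^{2(m-1)}-q^{2(m-t-1)}\bigr)$; as $t$ increases the subtracted term $q^{2(m-t-1)}$ strictly decreases (for $t$ in the range $0\le t\le \lfloor m/2\rfloor$, so the exponent $2(m-t-1)$ stays nonnegative and the powers are genuinely distinct), hence $N(t)$ is strictly increasing. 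Consequently $N(1)<N(2)<\cdots$, so $d(C(2,m))=N(1)=q^{\delta}$ and, since $m>4$ guarantees that $t=2$ lies in range so that $S_{C(2,m)}$ is nonempty, $e(C(2,m))=N(2)=q^{\delta}+q^{\delta-2}$. I expect this monotonicity verification to be the main (though modest) obstacle, since one must be careful that the exponents remain in the valid range and that $t=2$ is actually attained, which is exactly where the standing hypothesis $m>4$ is used.
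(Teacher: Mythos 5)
Your proposal is correct and follows essentially the same route as the paper: the paper's proof likewise reads both values off Nogin's spectrum (Proposition \ref{spectrumc2m}) by noting that $\theta_t = q^{2(m-t-1)}\frac{q^{2t}-1}{q^2-1}$ is increasing in $t$ and that its first two positive values are $q^{\delta}$ and $q^{\delta}+q^{\delta-2}$. Your write-up merely makes explicit the details the paper leaves implicit (the rewriting that proves monotonicity, the positivity of the multiplicities $N(m,2t)$, and the use of $m>4$ to ensure $t=2$ is in range), all of which are verified correctly.
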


\begin{proof}
The numbers $\theta_t:=  q^{2(m-t-1)}\frac{q^{2t}-1}{q^2-1}$ increase with $t$ and the first two positive values of $\theta_t$ ($t\ge 0$) are $q^{\delta}$ and $q^{\delta} +  q^{\delta -2}$.
\end{proof}

We now prove a number of auxiliary results needed to prove the main theorem. 

\begin{lemma}
\label{lemma_addonevec}
Let $E$ and $E_1$ be  subspaces of $\bigwedge^2V$ such that $E\subset E_1$ and $\dim E_1 = \dim E + 1$. Assume that $E$ is   decomposable and 
$E_1$ is not   decomposable. Then we have the following. 
\begin{enumerate}
	\item[{\rm (i)}] The set $E_1\setminus E$ contains at most $q^2(q-1)$   decomposable vectors. 
	\item[{\rm (ii)}] If $E_1\setminus E$ contains a   decomposable vector $\omega$ such that $V_{\omega}\subseteq V^E$, then $E_1\setminus E$ contains exactly $q^2(q-1)$   decomposable vectors.
\end{enumerate}
\end{lemma}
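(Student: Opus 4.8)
Since $E$ is decomposable, by Theorem~\ref{structure} it is close, and so every nonzero $\omega' \in E$ is decomposable with $V_{\omega'} \subseteq V^E$. The plan is to parametrize the decomposable vectors in $E_1 \setminus E$ via the structure of $V^E$ and the rank criterion of Corollary~\ref{cor:sigmapirank2}. Fix any $\omega_0 \in E_1 \setminus E$, so that every element of $E_1 \setminus E$ has the form $\omega_0 + \eta$ for a unique $\eta \in E$. I would study when $\omega_0 + \eta$ is decomposable, i.e.\ (by \eqref{decomprank2}) when $\rank(\sigma(\omega_0 + \eta)) = 2$, equivalently $\dim V_{\omega_0 + \eta} = 2$ by Lemma~\ref{elemlemma1}.

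\textbf{The key counting step for (i).} Because $E_1$ is \emph{not} decomposable, $\omega_0$ itself may or may not be decomposable, but the crucial point is that decomposability of $\omega_0 + \eta$ forces a strong constraint tying $V_{\omega_0+\eta}$ to the subspaces $V_{\omega'}$ appearing in $E$. First I would establish that if $\omega_0 + \eta$ and $\omega_0 + \eta'$ are both decomposable with $\eta \ne \eta'$, then their difference $\eta - \eta' \in E$ is decomposable (it already is, being in $E$) and Lemma~\ref{structure_lemma1} constrains $\dim(V_{\omega_0+\eta} \cap V_{\omega_0+\eta'})$. The aim is to show that the decomposable vectors in $E_1 \setminus E$ all share a common line inside $V$, or more precisely that the associated $2$-dimensional subspaces $V_{\omega_0+\eta}$ intersect $V^E$ in a controlled way. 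Concretely, I expect that the set of decomposable vectors in $E_1\setminus E$ is either empty or forms a coset-like family parametrized by choices that give at most $q^2$ possibilities for the "new" direction and $q-1$ scalings, yielding the bound $q^2(q-1)$. The factor $q^2(q-1)$ strongly suggests the count runs over an affine plane $\A^2(\Fq)$ worth of vectors times the $(q-1)$ nonzero scalars, so I would aim to exhibit an explicit injection from the decomposable vectors in $E_1\setminus E$ into $\Fq^2 \times (\Fq \setminus \{0\})$.

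\textbf{The exact count for (ii).} For part (ii), the hypothesis supplies a decomposable $\omega \in E_1 \setminus E$ with $V_\omega \subseteq V^E$, which pins down a definite geometric configuration: writing $\omega = u \wedge v$ with $u,v \in V^E$, I can use this basepoint in place of the arbitrary $\omega_0$ above. The plan is then to show the upper bound of $q^2(q-1)$ is \emph{achieved} by producing explicitly, for each pair in $\Fq^2$ and each nonzero scalar, a genuinely decomposable vector $\omega + \eta \in E_1 \setminus E$. Here I would write a basis of $V^E$ adapted to the close structure of $E$ (type I or type II, via Lemma~\ref{lem:VsubEandsupE}), express $\omega$ and a generic $\eta \in E$ in Pl\"ucker/skew-symmetric coordinates through $\sigma$, and compute directly that the rank stays equal to $2$ along a full $2$-parameter family. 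The containment $V_\omega \subseteq V^E$ is exactly what guarantees the "new" vector does not escape $V^E$, so the rank-$4$ obstruction from Corollary~\ref{cor:sigmarank4} cannot arise across that family.

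\textbf{Main obstacle.} The hard part will be the matching upper bound in (i): ruling out the possibility that decomposable vectors in $E_1 \setminus E$ come in two or more geometrically incompatible families whose total could exceed $q^2(q-1)$. I expect this requires a careful case analysis on whether $E$ is close of type~I or type~II and on the position of $V_{\omega_0}$ relative to $V^E$ and $V_E$, repeatedly invoking Lemma~\ref{structure_lemma1} to force the pairwise intersection dimensions. The subtlety is that $\omega_0$ need not be decomposable, so I cannot assume a single clean basepoint from the outset; the cleanest route is probably to first dispose of the case where $E_1 \setminus E$ contains \emph{no} decomposable vector (bound trivially holds), and otherwise to pick a decomposable basepoint and reduce to the rank computation via $\sigma$, where the even-rank constraint on skew-symmetric matrices does most of the work.
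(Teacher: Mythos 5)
Your proposal assembles the right ingredients (the structure theorem, a case split on type I/type II, Corollary~\ref{lemma_ab}, and a count of the shape $q^2\times(q-1)$), but it never actually proves the statement: the upper bound in (i), which is the entire content of the lemma, is deferred to your final paragraph as the ``main obstacle,'' where you only say you \emph{expect} an injection into $\Fq^2\times(\Fq\setminus\{0\})$ to exist and that a case analysis ``should'' force it. That injection is precisely what must be constructed, and it has a short, concrete construction that your sketch circles around without landing on. Namely: one may assume $E_1\setminus E$ contains a decomposable vector $\omega=u\wedge v$ (else both parts are trivial), so $E_1=E+F\omega$, and every element of $E_1\setminus E$ is uniquely of the form $\eta+\lambda\omega$ with $\eta\in E$, $\lambda\neq 0$. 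If $E$ is close of type I, write $E=\vspan\{f\wedge g_i: i=1,\dots,r\}$; then $\{f,u,v\}$ is linearly independent (otherwise $\omega=f\wedge h$ for some $h$ and $E_1$ would be decomposable, a contradiction), and Corollary~\ref{lemma_ab} shows that $f\wedge g+\lambda\omega$ with $\lambda\neq0$ is decomposable if and only if $g\in\vspan\{f,u,v\}$, equivalently $f\wedge g=f\wedge x$ for some $x\in V_\omega\cap V^E$. Hence the decomposable vectors of $E_1\setminus E$ are \emph{exactly} the vectors $f\wedge x+\lambda\omega$ with $x\in V_\omega\cap V^E$ and $\lambda\neq0$: at most $|V_\omega|\cdot(q-1)=q^2(q-1)$ of them, and exactly $q^2(q-1)$ when $V_\omega\subseteq V^E$. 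This single computation delivers both (i) and (ii) simultaneously; no separate ``achievability'' construction of the kind you outline for (ii) is needed.

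Two further points your plan gets wrong or misses. First, your parametrization ``every element of $E_1\setminus E$ has the form $\omega_0+\eta$ for a unique $\eta\in E$'' is false as stated: $E_1\setminus E$ is the union of the $q-1$ cosets $\lambda\omega_0+E$, $\lambda\neq0$, so the single coset $\omega_0+E$ misses most of it (you recover the factor $q-1$ only informally later). Second, in the type II case, which by Corollary~\ref{cor:struturethm} forces $\dim E=3$ and $E=\bigwedge^2 G$ for a $3$-dimensional $G=V^E$, the hypothesis of (ii) can never hold: $V_\omega\subseteq V^E$ would give $\omega\in\bigwedge^2G=E$, contradicting $\omega\in E_1\setminus E$. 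So (ii) is vacuous there, and only the bound (i) needs a (separate, and again explicit) count in that case. Your sketch treats type II symmetrically with type I for part (ii), which cannot work.
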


\begin{proof}
Both (i) and (ii) hold trivially if  $E_1 \setminus E$ contains no   decomposable vector. Now, suppose $E_1 \setminus E$ contains a   decomposable vector, say $\omega$. Then $E_1=E+F\omega$. 
Write $\omega=u\wedge v$, where $u,v\in V$, and let $r:=\dim E$. By 
Theorem \ref{structure}, we are in either of the following two cases. 
\smallskip

{\em Case 1:} $E$ is close of type I.
\smallskip

In this case, there are linearly independent elements $f,g_1, \dots, g_r\in V$ such that $E=\vspan\{f\wedge g_i : i=1, \dots, r\}$. Let $G:= \vspan\{g_1, \dots , g_r\}$. Elements $\xi$ of $E_1$ 
are of the form $\xi = f\wedge g + \lambda (u\wedge v)$, where $g\in G$ and $\lambda\in \Fq$. Clearly, $\xi$ and $(g, \lambda)$ determine each other uniquely, and $\xi \in E_1\setminus E$ if and only if $\lambda\ne 0$.
Observe that $\{f,u,v\}$ is linearly independent, lest 
we can write $u\wedge v = f\wedge h$ for some $h\in V$, and consequently, $E_1$ becomes   decomposable. 
Hence, by Corollary \ref{lemma_ab}, we see that if $\lambda\ne 0$, then 
$\xi= f\wedge g + \lambda (u\wedge v)$  is   decomposable if and only if $g\in \vspan\{f,u,v\}$. 
Further, in view of Lemmas \ref{elemlemma1} and \ref{lem:VsubEandsupE}, we have $V_{\omega}=\vspan\{u,v\}$ and $V^E= \vspan\{f,g_1, \dots , g_r\}$. Thus, $g\in \vspan\{f,u,v\}$  if and only if $f\wedge g = f\wedge x$  for some  $x\in V_{\omega}\cap V^E$. It follows that
  decomposable elements of $ E_1\setminus E$ are precisely of the form $f\wedge x + \lambda (u\wedge v)$, where $x\in V_{\omega}\cap V^E$ and $\lambda\in \Fq\setminus\{ 0\}$. Since $\left|V_{\omega}\cap V^E \right|\le \left|V_{\omega}\right| = q^2$ and $\left| \Fq\setminus\{ 0\}\right| = q-1$, both (i) and (ii) are proved. 
\smallskip

{\em Case 2:} $E$ is close of type II, but not closed  of type I. 
\smallskip

In this case, by Corollary \ref{cor:struturethm}, we must have $\dim E =3$. Thus, there are linearly independent elements $g_1, g_2,g_3\in V$ such that 
$E=\vspan\{g_2\wedge g_3, g_1\wedge g_3, g_1\wedge g_2\}$. Let 
$G:=\vspan\{g_1, g_2 , g_3\}$. Note that since $G=V^E$ and $\omega=u\wedge v\not\in E$, the possibility that $V_{\omega}\subseteq V^E$ does not arise in this case. Thus $\dim V_{\omega}\cap V^E \le 1$ and (ii) holds vacuously. 
The elements of $E_1$ are of the form $\xi = g\wedge h + \lambda (u\wedge v)$, where $g,h\in G$ and $\lambda\in \Fq$. Clearly, $\xi$ is a   decomposable element of $ E_1\setminus E$ if $g\wedge h =0$ and $\lambda \ne 0$. If, in addition, $\xi = g\wedge h + \lambda (u\wedge v)$ is   decomposable for some $g,h\in G$ with $g\wedge h \ne 0$ and $\lambda\in \Fq\setminus\{0\}$, then by Corollary \ref{lemma_ab}, 
$\{g,h,u,v\}$ is linearly dependent, and hence $\dim V_{\omega}\cap V^E = 1$. So we may assume without loss of generality that $u=g_1$. Then it is clear that the elements of $E_1\setminus E$ are precisely the (unique) linear combinations of the form $\lambda (u\wedge v) + \lambda_1 (g_2\wedge g_3)+ \lambda_2 (g_1\wedge g_3)+ \lambda_3 (g_1\wedge g_2)$, where $\lambda\in \Fq\setminus\{0\}$ and $\lambda_1, \lambda_2, \lambda_3\in \Fq$; moreover, by  Corollary  \ref{lemma_ab}, such a linear combination is   decomposable if and only if $\lambda_1=0$. It follows that $E_1\setminus E$ contains at most $q^2(q-1)$   decomposable elements.
\end{proof}

The bound $q^2(q-1)$ in Lemma \ref{lemma_addonevec} can be improved if 
the dimension of the   decomposable subspace $E$ is small. 

\begin{lemma}
\label{lemma_addonevectosmall}
Let $E$ and $E_1$ be  subspaces of $\bigwedge^2V$ such that $E\subset E_1$ and $\dim E_1 = \dim E + 1$. Assume that $E$ is   decomposable of dimension $r \ge 1$ and 
$E_1$ is not   decomposable. 
Then $E_1\setminus E$ contains at most $q^{r-1}(q-1)$   decomposable elements.
\end{lemma}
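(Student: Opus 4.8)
The plan is to refine the counting argument from Lemma \ref{lemma_addonevec} by exploiting the extra structure available when $r=\dim E$ is small. Since $E$ is decomposable, Theorem \ref{structure} says $E$ is close of type I or type II; but the key additional input is Corollary \ref{cor:struturethm}, which for $\ell=2$ forces close subspaces to have small dimension. Specifically, with $\ell=2$ a close subspace of type I must satisfy $r\le m-1$, while a close subspace of type II must satisfy $r\le 3$. Thus for $r\ge 4$ the subspace $E$ can only be close of type I, and in the bound $q^{r-1}(q-1)$ we have $q^{r-1}\ge q^3 > q^2$, so the claimed bound is weaker than the $q^2(q-1)$ already proved in Lemma \ref{lemma_addonevec}. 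The content of the lemma is therefore really in the \emph{small} cases $r=1,2,3$, where we must sharpen $q^2(q-1)$ to $q^{r-1}(q-1)$, i.e.\ to $(q-1)$, $q(q-1)$, and $q^2(q-1)$ respectively.

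First I would reduce to the situation where $E_1\setminus E$ actually contains a decomposable vector $\omega=u\wedge v$ (otherwise the count is $0$ and there is nothing to prove), and write $E_1=E+F\omega$ exactly as in the proof of Lemma \ref{lemma_addonevec}. Following that proof in Case 1 ($E$ close of type I), I would recall that the decomposable elements of $E_1\setminus E$ are exactly those of the form $f\wedge x+\lambda(u\wedge v)$ with $x\in V_\omega\cap V^E$ and $\lambda\in\Fq\setminus\{0\}$, so their number is $\left|V_\omega\cap V^E\right|\cdot(q-1)$. The point is that $f\wedge x$ must lie in $E$, which for type I means $x\in\vspan\{g_1,\dots,g_r\}=:G$, so in fact the relevant scalars $x$ range over $V_\omega\cap G$ modulo the part killed by wedging with $f$; more carefully, distinct decomposable elements correspond to distinct values of $f\wedge x$, and $f\wedge x$ as $x$ ranges over $V_\omega$ takes at most $\dim(\text{image})$ many independent directions. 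The sharper bound should come from observing that $V_\omega\cap V^E$ cannot be all of $V_\omega=\vspan\{u,v\}$ when $r$ is small, because $V^E=\vspan\{f,g_1,\dots,g_r\}$ has dimension $r+1$, and one uses that $\omega\notin E$ forces $V_\omega\not\subseteq V^E$ in the relevant range, cutting $\left|V_\omega\cap V^E\right|$ down from $q^2$ to at most $q^{r-1}$ via a dimension count on how $V_\omega$ can meet the $(r+1)$-dimensional space $V^E$.

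The main obstacle, and the step I would spend the most care on, is getting the exact exponent $q^{r-1}$ rather than a cruder bound. I expect this to hinge on a precise analysis of the intersection $V_\omega\cap V^E$ together with the constraint imposed by $E_1$ being non-decomposable: if too many decomposable vectors appeared, one could produce two decomposable vectors whose $V_{\omega}$-spaces meet in dimension $\ell-1=1$, and Lemma \ref{structure_lemma1} would then make their span (hence a larger piece of $E_1$) decomposable, contradicting the hypothesis. I would therefore structure the count so that each decomposable vector in $E_1\setminus E$ is pinned to an element of a space whose dimension is controlled by $r$, and use the type-II restriction $r\le 3$ from Corollary \ref{cor:struturethm} to dispatch Case 2 separately (there $\dim E=3$, so the target bound is $q^2(q-1)$, matching Lemma \ref{lemma_addonevec} directly). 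Finally, I would assemble the cases and note that for $r\ge 4$, where only type I occurs, the already-established $q^2(q-1)\le q^{r-1}(q-1)$ suffices, so the new content and the delicate estimate live entirely in $r\in\{1,2,3\}$.
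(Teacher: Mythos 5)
Your reduction and your treatment of the large cases are fine: for $r\ge 3$ the target bound $q^{r-1}(q-1)$ is at least $q^2(q-1)$, so part (i) of Lemma \ref{lemma_addonevec} already suffices (this also disposes of your Case 2, since a subspace close of type II but not of type I has dimension exactly $3$). The genuine gap is in the only cases carrying new content, $r=1$ and $r=2$, where both of your proposed mechanisms for bounding $\dim V_\omega\cap V^E$ fail. First, the assertion that ``$\omega\notin E$ forces $V_\omega\not\subseteq V^E$'' is false for $r=2$: take $E=\vspan\{f\wedge g_1,\,f\wedge g_2\}$ and $\omega=g_1\wedge g_2$; then $\omega\notin E$ yet $V_\omega=\vspan\{g_1,g_2\}\subseteq V^E$. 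What excludes this configuration is not $\omega\notin E$ but the hypothesis that $E_1$ is not decomposable: if $V_\omega\subseteq V^E$, then since $\{f,u,v\}$ is linearly independent (as in the proof of Lemma \ref{lemma_addonevec}) one has $f\notin V_\omega$, so one may choose a basis $\{g_1',g_2'\}$ of $V_\omega$ with $E=\vspan\{f\wedge g_1',f\wedge g_2'\}$, whence $E_1=\vspan\{f\wedge g_1',\,f\wedge g_2',\,g_1'\wedge g_2'\}$ is close of type II and hence decomposable by Lemma \ref{lem:VsubEandsupE} --- a contradiction. This is exactly the step the paper supplies and your outline lacks. Moreover, even where your dimension count does apply it is too weak for $r=1$: there $V^E=V_{\omega_0}$ is $2$-dimensional, and $V_\omega\ne V_{\omega_0}$ (which is what $\omega\notin E$ gives, via Lemma \ref{elemlemma1}) only yields $\dim V_\omega\cap V^E\le 1$, i.e.\ the bound $q(q-1)$, not the required $q-1$.

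Second, your fallback mechanism --- produce two decomposable vectors of $E_1$ whose $V$-spaces meet in dimension $1$, so that Lemma \ref{structure_lemma1} makes their span decomposable, ``contradicting the hypothesis'' --- yields a contradiction only when that span is all of $E_1$, i.e.\ only when $r=1$. There it does work, and is essentially the paper's argument: if $\dim V_\omega\cap V_{\omega_0}=1$, then every nonzero $a\omega_0+b\omega$ is decomposable, so the $2$-dimensional space $E_1$ is decomposable, which is absurd; hence $\dim V_\omega\cap V_{\omega_0}=0$ and only the $q-1$ multiples of $\omega$ survive. For $r=2$, however, the mechanism proves nothing: $E_1$ is $3$-dimensional and already contains the $2$-dimensional decomposable subspace $E$, so exhibiting another $2$-dimensional decomposable subspace of $E_1$ contradicts no hypothesis --- and in the bad case $V_\omega\subseteq V^E$ such subspaces genuinely exist. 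To repair $r=2$ you must show that $V_\omega\subseteq V^E$ forces \emph{all} of $E_1$ to be decomposable: either by the type-II recognition above, or by noting via Corollary \ref{lemma_ab} that every element $f\wedge x+\lambda\omega$ with $x\in V_\omega$, $\lambda\in\Fq$, is decomposable or zero (the four vectors $f,x,u,v$ are dependent since $x\in\vspan\{u,v\}$), so that all $q^3-1$ nonzero elements of $E_1$ are decomposable. With that step inserted, and the $r=1$ case argued via Lemma \ref{structure_lemma1} as above, your outline becomes the paper's proof.
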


\begin{proof}
If $r\ge 3$, then the result 
is an immediate consequence of part (i) of Lemma \ref{lemma_addonevec}. 
Also, the result holds trivially if $E_1\setminus E$ contains no   decomposable element. Thus, let us assume that $r\le 2$ and $E_1=E+F\omega$, where $\omega\in E_1\setminus E$ is    decomposable. 

First, suppose $r=1$. Then $E=F\omega_0$ for some   decomposable 
$\omega_0\in \bigwedge^2V$. Since  $E_1$ is not   decomposable and $\omega\not\in E$, in view of Lemmas \ref{elemlemma1} and \ref{structure_lemma1}, 
we see that $\dim V_{\omega}\cap V_{\omega_0} = 0$. Hence 
from Corollary \ref{lemma_ab}, it follows that the only   decomposable elements in $E_1\setminus E$ are those of the form $\lambda \omega$ where $\lambda\in \Fq\setminus\{0\}$. Thus $E_1\setminus E$ contains at most $(q-1)$   decomposable elements, as desired. 

Next, suppose $r=2$. Then in view of Theorem \ref{structure}, there are linearly independent elements $f, g_1, g_2\in V$ such that $E = \vspan\{f\wedge g_1, f\wedge g_2\}$. As in the proof 
of Lemma \ref{lemma_addonevec}, we can write $\omega = u\wedge v$, where $u,v\in V$ are such that $\{f,u,v\}$ is linearly independent. Further, if $\dim V_{\omega}\cap V^E = 2$, then $V_{\omega}\subseteq V^E$ and we may assume without loss of generality that $g_1, g_2\in V_{\omega}$; hence 
$E_1 = \vspan\{f\wedge g_1, f\wedge g_2,g_1\wedge g_2\}$, and so $E_1$ is close of type II, which is a contradiction. Thus $\dim V_{\omega}\cap V^E < 2$ and so $\left|V_{\omega}\cap V^E \right|\le  q$. Moreover, as in the proof of Lemma \ref{lemma_addonevec}, 
  decomposable elements of $ E_1\setminus E$ are precisely of the form $f\wedge x + \lambda (u\wedge v)$, where $x\in V_{\omega}\cap V^E$ and $\lambda\in \Fq\setminus\{ 0\}$. Thus $E_1\setminus E$ contains at most $q(q-1)$   decomposable elements, as desired. 
\end{proof}

\begin{lemma}
\label{lem:maxcdvectors}
There exists a $(\mu+1)$-dimensional subspace of $\bigwedge^2V$ 
containing exactly $\left(q^{\mu}-1\right)+q^2\left(q-1\right)$ 
decomposable vectors.  Moreover, the remaining $\left(q^{\mu}-q^2\right)\left(q-1\right)$ nonzero elements in this subspace are of rank $4$.
\end{lemma}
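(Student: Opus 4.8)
The plan is to construct an explicit $(\mu+1)$-dimensional subspace and then count its decomposable vectors using the structure theory developed in this section. First I would take a maximal decomposable subspace $E$ of dimension $\mu = m-1$. Since $\ell=2$ and $m-\ell = m-2 \geq \ell$, we have $\mu = m-\ell+1$, so by Corollary \ref{cor:struturethm} a close subspace of type I of dimension $\mu$ exists; concretely, fix linearly independent $f, g_1, \dots, g_{\mu}$ (so $\mu+1 = m$ vectors spanning $V$) and set $E = \vspan\{f\wedge g_i : i = 1, \dots, \mu\}$. By Lemma \ref{lem:VsubEandsupE}, $V_E = \vspan\{f\}$ and $V^E = V$. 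Since $E$ is decomposable, it contributes $q^{\mu}-1$ decomposable nonzero vectors.

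Next I would enlarge $E$ to $E_1 = E + F\omega$ with $\omega = u\wedge v$ chosen so that $V_\omega = \vspan\{u,v\} \subseteq V^E = V$ and $\{f,u,v\}$ is linearly independent, guaranteeing (as in the Case 1 analysis of Lemma \ref{lemma_addonevec}) that $E_1$ is \emph{not} decomposable. Then $\dim E_1 = \mu + 1$ as required. The condition $V_\omega \subseteq V^E$ holds automatically here since $V^E = V$, so part (ii) of Lemma \ref{lemma_addonevec} applies and tells us that $E_1\setminus E$ contains \emph{exactly} $q^2(q-1)$ decomposable vectors. Adding the two counts gives exactly $(q^\mu - 1) + q^2(q-1)$ decomposable vectors in $E_1$, which is the first assertion.

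For the rank statement, the total number of nonzero vectors in $E_1$ is $q^{\mu+1}-1$. Subtracting the decomposable count, the number of non-decomposable nonzero vectors is
$$
(q^{\mu+1}-1) - (q^\mu-1) - q^2(q-1) = q^\mu(q-1) - q^2(q-1) = (q^\mu - q^2)(q-1),
$$
matching the claimed figure. It remains to show each of these is of rank exactly $4$. By Corollary \ref{cor:sigmapirank2}, the decomposable vectors are precisely those of rank $2$, so every non-decomposable nonzero vector has rank $\geq 4$. The key point is that rank is at most $4$: every element of $E_1$ is a sum $f\wedge g + \lambda(u\wedge v)$ with $g \in \vspan\{g_1,\dots,g_\mu\}$ and $\lambda \in \Fq$, hence a sum of at most two decomposable $2$-vectors. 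Applying $\sigma$ and using that $\sigma(f\wedge g)$ and $\sigma(u\wedge v)$ each have rank at most $2$, subadditivity of rank forces $\rank(\sigma(\xi)) \leq 4$; since a skew-symmetric matrix has even rank and the vector is non-decomposable (rank $\neq 0,2$), the rank is exactly $4$.

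The main obstacle I anticipate is verifying cleanly that $E_1$ is not decomposable and that the hypothesis $V_\omega \subseteq V^E$ of Lemma \ref{lemma_addonevec}(ii) is met, so that the count is \emph{exactly} $q^2(q-1)$ rather than merely bounded above by it; this is where one must be careful that $\{f,u,v\}$ is genuinely linearly independent and that $\omega \notin E$. Everything else is bookkeeping: the decomposable counts come directly from Lemma \ref{lemma_addonevec}, the rank dichotomy from Corollary \ref{cor:sigmapirank2}, and the ``rank at most $4$'' from the two-term structure of elements of $E_1$ together with subadditivity of matrix rank under $\sigma$.
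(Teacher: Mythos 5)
Your construction is the paper's own, almost verbatim: take the type I close subspace $E=\vspan\{f\wedge g_i : i=1,\dots,\mu\}$ of dimension $\mu$ (so that $V^E=V$), adjoin one decomposable vector $\omega$ with $V_\omega\subseteq V^E$, apply part (ii) of Lemma \ref{lemma_addonevec} to get exactly $q^2(q-1)$ new decomposable vectors, and deduce the rank-$4$ statement from the two-term shape $f\wedge g+\lambda(u\wedge v)$ of elements of $E_1$ (the paper cites Corollary \ref{cor:sigmarank4}; your appeal to evenness of skew-symmetric rank plus subadditivity under $\sigma$ is the same content). The only deviation is that the paper takes $\omega=g_1\wedge g_2$ explicitly, whereas you allow a general $\omega=u\wedge v$ with $\{f,u,v\}$ linearly independent.

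That deviation is exactly where your writeup has a gap, and it is the one you flag but do not close: you assert that independence of $\{f,u,v\}$ ``guarantees (as in the Case 1 analysis of Lemma \ref{lemma_addonevec}) that $E_1$ is not decomposable.'' The Case 1 analysis proves the \emph{converse}: non-decomposability of $E_1$ is a standing hypothesis of that lemma, and from it one deduces that $\{f,u,v\}$ must be independent. The implication you need is false as a general principle: for instance, with $E'=\vspan\{f\wedge g_1, f\wedge g_2\}$ and $\omega=g_1\wedge g_2$, the set $\{f,g_1,g_2\}$ is independent, yet $E'+F\omega=\vspan\{g_1\wedge g_2, f\wedge g_2, f\wedge g_1\}$ is close of type II, hence decomposable. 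What rescues your argument is the size of $E$: since $\dim\vspan\{f,u,v\}=3$ and $\dim\vspan\{g_1,\dots,g_\mu\}=\mu=m-1\ge 4$, there exists $g\in\vspan\{g_1,\dots,g_\mu\}$ outside $\vspan\{f,u,v\}$, and then $f\wedge g+u\wedge v$ is non-decomposable by Corollary \ref{lemma_ab}, so $E_1$ is indeed not decomposable. (The paper's explicit choice makes this immediate: $f\wedge g_3+g_1\wedge g_2$ is non-decomposable because $\{f,g_3,g_1,g_2\}$ is independent.) With that one-line verification inserted, your proof is complete and coincides with the paper's.
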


\begin{proof}
By Corollary \ref{cor:struturethm}, there exists a $\mu$-dimensional   decomposable subspace of $\bigwedge^2V$, say $E$. 
Since $m>4$, we have $\mu>3$, and so by Theorem \ref{structure} and Corollary \ref{cor:struturethm}, 
$E$ is close of type~I. Thus there exist $\mu+1$ linearly independent elements $f, g_1, \dots, g_\mu \in V$ such that $E=\vspan\{f\wedge g_i:i=1,\dots, \mu\}$. Now, consider $\omega:= g_1\wedge g_2$ and $E_1 := E + F\omega$. It is clear that $\omega\not\in E$ and $E_1$ is not 
  decomposable. Moreover, by Theorem \ref{structure} and Lemma \ref{lem:VsubEandsupE}, $\dim V^E=\mu+1 = m$, and thus $V^E= V\supseteq V_{\omega}$. So it follows from part (ii) of Lemma \ref{lemma_addonevec} that $E_1\setminus E$ contains exactly $q^2(q-1)$   decomposable elements. Since every nonzero element of $E$ is   decomposable, we see that $E_1$ is a $(\mu+1)$-dimensional subspace of $\bigwedge^2V$  
containing exactly 
$\left(q^{\mu}-1\right)+q^2\left(q-1\right)$ 
  decomposable vectors. Since every element of $E_1$ is of the form $a (f\wedge g) + b(g_1\wedge g_2)$ for some $a,b\in F$ and $g\in \vspan\{g_1, \dots, g_\mu \}$, it follows from Corollary \ref{cor:sigmapirank2} and Corollary \ref{cor:sigmarank4} that the remaining $\left(q^{\mu+1}-1\right) - \left(q^{\mu}-1\right) - q^2\left(q-1\right)$ elements are of rank $4$. 
\end{proof}

\begin{lemma}
\label{lem:atmostcdvectors} 
Every $(\mu+1)$-dimensional subspace of $\bigwedge^2V$  contains 
at most 
$\left(q^{\mu}-1\right)+q^2\left(q-1\right)$ decomposable vectors.
\end{lemma}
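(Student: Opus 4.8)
The goal is to prove an upper bound matching the lower bound from Lemma~\ref{lem:maxcdvectors}, so the plan is to take an arbitrary $(\mu+1)$-dimensional subspace $E_1$ of $\bigwedge^2V$ and show it cannot contain more decomposable vectors than the extremal configuration exhibited there. I would first dispose of the easy case: if $E_1$ is itself decomposable, then by Corollary~\ref{cor:struturethm} we would need $\mu+1 \le \mu$, which is impossible, so $E_1$ is \emph{not} decomposable. Hence $E_1$ contains some non-decomposable vector, and the natural strategy is to build $E_1$ up from a large decomposable subspace by adjoining one vector and invoking Lemma~\ref{lemma_addonevec}(i).

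The key step is to locate, inside $E_1$, a decomposable subspace $E$ of codimension one, i.e. $\dim E = \mu$, with $E_1 = E + F\omega$ for some $\omega \in E_1 \setminus E$. If such an $E$ exists, then every decomposable vector of $E_1$ lies either in $E$ or in $E_1 \setminus E$; the former number is $q^\mu - 1$ (all nonzero vectors of a decomposable $E$ are decomposable), and the latter is at most $q^2(q-1)$ by part~(i) of Lemma~\ref{lemma_addonevec}, giving the total bound $\left(q^{\mu}-1\right)+q^2\left(q-1\right)$ at once. So the whole proof reduces to exhibiting a $\mu$-dimensional decomposable subspace of $E_1$.

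The main obstacle is precisely establishing the existence of this codimension-one decomposable subspace $E \subset E_1$. The clean way I would attempt this is to count: each decomposable vector $\omega' \in E_1$ spans a line, and the lines lying in a single close (type I or type II) subspace are heavily constrained by Theorem~\ref{structure} and Corollary~\ref{cor:struturethm}. One approach is to suppose, for contradiction, that $E_1$ contains \emph{strictly more} than $\left(q^{\mu}-1\right)+q^2\left(q-1\right)$ decomposable vectors, pick a maximal decomposable subspace $E \subseteq E_1$ (which has dimension at most $\mu$), and analyze how the extra decomposable vectors of $E_1 \setminus E$ distribute. If $\dim E = \mu$ we are immediately in the situation above; if $\dim E < \mu$ I would combine Lemma~\ref{lemma_addonevectosmall} (the sharper bound $q^{r-1}(q-1)$ for small $r$) with an inductive count over a chain $E = F_r \subset F_{r+1} \subset \cdots \subset F_{\mu+1} = E_1$ of subspaces, bounding the decomposable vectors added at each step and checking that the accumulated total never exceeds $\left(q^{\mu}-1\right)+q^2\left(q-1\right)$.

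The delicate bookkeeping is in the chain argument: at each stage $F_j \to F_{j+1}$ one must know whether $F_j$ is decomposable in order to apply Lemma~\ref{lemma_addonevec} or~\ref{lemma_addonevectosmall}, and a decomposable $F_j$ of dimension $r$ contributes $q^r - 1$ decomposable vectors \emph{inside} it plus at most $q^{\min(r,3)-1}(q-1)$ or $q^2(q-1)$ \emph{newly} in $F_{j+1}\setminus F_j$. I expect the worst case — the one that forces the bound to be tight — to be exactly the type~I tower where a full $\mu$-dimensional decomposable $E$ sits inside $E_1$, matching the construction of Lemma~\ref{lem:maxcdvectors}; verifying that every other distribution of decomposable lines yields a strictly smaller count is the crux, and I would organize it by the dimension $r$ of a maximal decomposable subspace of $E_1$, using that $r \le \mu$ with equality giving the sharp bound and $r < \mu$ giving slack through the improved estimates.
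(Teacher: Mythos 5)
Your easy case is fine ($E_1$ cannot itself be decomposable, by Corollary \ref{cor:struturethm}), and your treatment of the situation where a maximal decomposable subspace $E\subseteq E_1$ has dimension exactly $\mu$ is correct: one application of Lemma \ref{lemma_addonevec}(i) gives $(q^\mu-1)+q^2(q-1)$. But the case $\dim E=r<\mu$ — which you cannot rule out, since an arbitrary $E_1$ need not contain any large decomposable subspace — is where your argument has a genuine gap. Your chain $E=F_r\subset F_{r+1}\subset\cdots\subset F_{\mu+1}=E_1$ cannot be pushed through: by maximality of $r$, every $F_j$ with $j>r$ is \emph{not} decomposable, and both Lemma \ref{lemma_addonevec} and Lemma \ref{lemma_addonevectosmall} require the codimension-one subspace to be decomposable, so neither applies to any step after the first. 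Worse, the per-step bound you hope for is simply false over a non-decomposable base: take $E_1$ to be the extremal subspace of Lemma \ref{lem:maxcdvectors} (containing a decomposable $E$ of dimension $\mu$) and let $F_\mu$ be any hyperplane of $E_1$ not containing $E$; then $E_1\setminus F_\mu$ contains all $q^\mu-q^{\mu-1}$ decomposable vectors of $E\setminus F_\mu$, i.e.\ at least $q^{\mu-1}(q-1)$ of them, which vastly exceeds $q^2(q-1)$ since $\mu\ge 4$. So no step-by-step accounting of ``new decomposable vectors'' can work with the tools available.

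The paper's proof replaces the chain by a \emph{pencil decomposition}, and this is the key idea you are missing. Choose a complement $E'$ of the maximal decomposable subspace $E_r$ in $E_1$, so $E_1\setminus E_r$ is the disjoint union, over projective classes of nonzero $\omega\in E'$, of the sets $(E_r+F\omega)\setminus E_r$. Every such pencil $E_r+F\omega$ has the decomposable $E_r$ as a hyperplane and is itself non-decomposable (maximality of $r$), so Lemma \ref{lemma_addonevec}(i) — or Lemma \ref{lemma_addonevectosmall} when $r\le 2$ — applies to \emph{each} pencil, giving at most $q^2(q^{\mu+1-r}-1)$ (resp.\ $q^{r-1}(q^{\mu+1-r}-1)$) decomposable vectors outside $E_r$, hence at most $N_r:=(q^r-1)+q^{\min\{2,r-1\}}\left(q^{\mu+1-r}-1\right)$ in all; one then checks $N_r\le(q^\mu-1)+q^2(q-1)$ for every $1\le r\le\mu$. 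Note also that your expectation that every $r<\mu$ yields a \emph{strictly} smaller count is wrong: the difference works out to $(q^r-q^3)(q^{\mu-r}-1)$ for $r\ge 3$, which vanishes at $r=3$, so the bound is tight there too and a proof by ``strict slack'' for all $r<\mu$ cannot succeed.
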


\begin{proof} 
Let $E^*$ be any $(\mu+1)$-dimensional subspace of $\bigwedge^2V$.  
Let $r$ be the maximum among the dimensions of all   decomposable subspaces of $E^*$. If $r=0$, then $E^*$ contains no   decomposable element and the assertion holds trivially. Assume that $r\ge 1$. Let $E_r$ be a   decomposable $r$-dimensional subspace of $E^*$. Extending a basis of $E_r$ to $E^*$, we obtain a subspace  $E'$ of $E^*$ such that $E_r\cap E'=\{0\}$ and $E^*=E_r+E'$. 
Clearly, 
\begin{equation}
\label{unions}
E^* = \bigcup_{\omega\in E'} E_{r} +F\omega \quad \mbox{ and } \quad 
E^*\setminus E_{r} = \bigcup_{0\ne \omega\in E'} \left(E_{r} +F\omega\right)\setminus E_{r}.
\end{equation} 
Given any nonzero $\omega\in E'$, the space $E_{r} +F\omega$ is not   decomposable, thanks to the maximality of $r$, and so by part (i) of Lemma \ref{lemma_addonevec}, $\left(E_{r} +F\omega\right)\setminus E_{r}$ contains at most $q^2(q-1)$   decomposable elements. 
Moreover, for any nonzero $\omega, \omega'\in E'$, we have $E_{r} +F\omega = E_{r} +F\omega'$ 
if $\omega$ and $\omega'$ differ by a nonzero constant,
whereas $\left(E_{r} +F\omega\right) \cap \left( E_{r} +F\omega'\right) = E_{r}$ if $\omega$ and $\omega'$ do not differ by a nonzero constant. Thus the second decomposition in \eqref{unions} is disjoint if we let $\omega$ vary over nonzero elements of $E'$ that are not proportional to each other. It follows that  
$E^*\setminus E_{r}$ contains at most 
$\frac{q^2(q-1)|E'\setminus\{0\}|}{(q-1)} = q^2\left(q^{\mu+1-r}-1\right)$ 
decomposable elements. In case $r\le 2$, then using Lemma \ref{lemma_addonevectosmall} 
instead of part (i) of Lemma \ref{lemma_addonevec}, it follows that 
$E^*\setminus E_{r}$ contains at most $q^{r-1}\left(q^{\mu+1-r}-1\right)$   decomposable elements. Thus, if we let $s:=\min\{2, r-1\}$ and $N_r:= (q^r-1)+ q^s\left(q^{\mu+1-r}-1\right)$, then we see that $E^*$ contains at most 
$N_r$   decomposable elements. To complete the proof it suffices to observe that 
$$
\left(q^{\mu}-1\right)+q^2\left(q-1\right) - N_r  
= \left\{\begin{array}{ll} (q^r - q^3)(q^{\mu-r} - 1) &\mbox { if } r\ge 3, \\
(q^2 - q^{r-1})(q - 1) &\mbox { if } 1\le r\le 2,\end{array}\right.
$$
is always nonnegative. 
\end{proof}

\begin{corollary}
\label{cor:delta2mg2m}
We have $\Delta\left(C(2,m)\right) = \left(q^{\mu}-1\right)+q^2\left(q-1\right)$ and 
$g_{k-\mu-1}(2,m) = 1 + q + q^ 2 +\cdots + q^{\mu} + q^2$.
\end{corollary}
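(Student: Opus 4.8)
The plan is to derive both assertions of Corollary \ref{cor:delta2mg2m} directly from the two preceding lemmas together with the dictionary between decomposable subspaces and minimum-weight codewords. For the first assertion, recall from Corollary \ref{cornogin} that for $c\in C(2,m)$ we have $\Vert c\Vert = q^\delta$ if and only if $\omega_c$ is decomposable, and that $\tau$ is an isomorphism of $\bigwedge^{m-2}V$ onto $C(2,m)$. Since $\delta=2(m-2)$ is the minimum distance of $C(2,m)$ by Corollary \ref{cornoginde}, the quantity $\Delta(C(2,m))=\Delta_k(C(2,m))$ is, by definition, the maximum over all subspaces $D$ of the full code $C(2,m)$ of the number of minimum-weight codewords in $D$. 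But the full code is everything, so $\Delta(C(2,m))$ is simply the total number of minimum-weight (equivalently, decomposable) codewords. I would prefer, however, to phrase the argument so that it transparently matches the lemmas, which are stated for subspaces of $\bigwedge^2V$ rather than $\bigwedge^{m-2}V$. The bridge is the Hodge star operator $\hodge\colon\bigwedge^2V\to\bigwedge^{m-2}V$ of Section \ref{sec:duality}, which is a linear isomorphism carrying decomposable vectors to decomposable vectors.

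Concretely, I would argue as follows. A $(\mu+1)$-dimensional subspace $D$ of $C(2,m)$ corresponds, via $c\mapsto \omega_c$, to a $(\mu+1)$-dimensional subspace $\D$ of $\bigwedge^{m-2}V$, and then via $\hodge^{-1}$ to a $(\mu+1)$-dimensional subspace $E^*$ of $\bigwedge^2V$; the number of minimum-weight codewords in $D$ equals the number of decomposable vectors in $E^*$, because $\Vert c\Vert=q^\delta\Leftrightarrow\omega_c$ decomposable $\Leftrightarrow\hodge^{-1}(\omega_c)$ decomposable. Therefore
$$
\Delta\bigl(C(2,m)\bigr)=\max\bigl\{\,\#\{\text{decomposable vectors in }E^*\}\ :\ \dim E^*=\mu+1\,\bigr\}.
$$
Lemma \ref{lem:atmostcdvectors} gives the upper bound $\left(q^{\mu}-1\right)+q^2\left(q-1\right)$ for every such $E^*$, and Lemma \ref{lem:maxcdvectors} exhibits an $E^*$ attaining it; hence $\Delta(C(2,m))=\left(q^{\mu}-1\right)+q^2\left(q-1\right)$, which is the first claim. (Here I am using that $\mu=m-1$ so that $\mu+1=m=\dim V$, consistent with the standing assumption $m>4$.)

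For the second assertion, I would compute $g_{k-\mu-1}(2,m)=\max\{g(\E):\codim\E=k-\mu-1\}$, i.e.\ the maximum over $(\mu+1)$-dimensional subspaces $\E$ of $\bigwedge^2V$ of $g(\E)=|\E\cap T(2,m)|$. The point is that $g(\E)$ counts projective points, and each decomposable line through the origin contains exactly $q-1$ nonzero decomposable vectors, while $T(2,m)$ consists of exactly one representative per such line (the points of the Grassmannian). Thus, for a subspace $\E$, the number of decomposable vectors in $\E$ equals $(q-1)\,g(\E)$. Combining this with the exact count from Lemma \ref{lem:maxcdvectors} and the bound from Lemma \ref{lem:atmostcdvectors}, the maximal value of $g(\E)$ over $(\mu+1)$-dimensional subspaces is
$$
\frac{\left(q^{\mu}-1\right)+q^2\left(q-1\right)}{q-1}=\frac{q^{\mu}-1}{q-1}+q^2=1+q+q^2+\cdots+q^{\mu-1}+q^2.
$$
This is exactly the stated expression $1+q+q^2+\cdots+q^{\mu}+q^2$ once I reconcile the index range; I would double-check this normalization carefully, since the final displayed formula runs the geometric series up to $q^{\mu}$ rather than $q^{\mu-1}$, and I expect the main (and only genuine) obstacle here to be pinning down this off-by-one in the exponent rather than any substantive mathematics. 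The substantive content is entirely contained in the two lemmas just proved, so the corollary is essentially a bookkeeping consequence of translating ``number of decomposable vectors'' into ``$\Delta$'' and into ``$(q-1)g(\E)$'' correctly.
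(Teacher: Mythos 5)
The one genuine flaw is in your identification of what $\Delta\left(C(2,m)\right)$ denotes, which is the only step of the first assertion that is not pure bookkeeping. You first declare it to be $\Delta_k\left(C(2,m)\right)$, i.e.\ ``the total number of minimum-weight codewords.'' Under that literal reading the corollary would be false: by Proposition \ref{noginminwt} (restated in Corollary \ref{cornogin}) that total equals $(q-1)n$, a polynomial in $q$ of degree $\delta+1=2m-3$, whereas the claimed value $\left(q^{\mu}-1\right)+q^2(q-1)$ has degree $\mu=m-1$. What the corollary actually asserts --- and what is needed when Theorem \ref{thm:eC} is applied with $r=\mu+1$ in the proof of Theorem \ref{mainthm} --- is the value of $\Delta_{\mu+1}\left(C(2,m)\right)$, the maximum of $\Delta(D)$ over $(\mu+1)$-dimensional subspaces $D$ of the code. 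Your displayed formula, expressing $\Delta\left(C(2,m)\right)$ as the maximum number of decomposable vectors in a $(\mu+1)$-dimensional subspace $E^*$ of $\bigwedge^2V$, is exactly this second quantity; but it is not a ``rephrasing'' of the first one, so the ``Therefore'' joining them is a non sequitur between two different numbers (and no argument could bridge them, since the first reading is simply not what is being proved). Once the statement is read as being about $\Delta_{\mu+1}$, the rest of your argument --- the transfer via $c\mapsto\omega_c$ and $\hodge^{-1}$ to $(\mu+1)$-dimensional subspaces of $\bigwedge^2V$, then the upper bound from Lemma \ref{lem:atmostcdvectors} and its attainment from Lemma \ref{lem:maxcdvectors} --- is correct and is precisely the paper's intended proof.

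Your treatment of the second assertion coincides with the paper's: since each point of $T(2,m)$ accounts for exactly $q-1$ nonzero decomposable vectors on its line, the number of decomposable vectors in a subspace $\E$ is $(q-1)g(\E)$, so the two lemmas give $g_{k-\mu-1}(2,m)=\bigl[\left(q^{\mu}-1\right)+q^2(q-1)\bigr]/(q-1)=1+q+\cdots+q^{\mu-1}+q^2$. Your suspicion about the exponent is justified and should be stated with confidence rather than left as a worry: the $q^{\mu}$ in the corollary's statement is a typo for $q^{\mu-1}$, as confirmed by the paper's own proof of the corollary, by equation \eqref{dualnewdr} in the introduction, and by Theorem \ref{mainthm} combined with Corollary \ref{dsNG}.
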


\begin{proof}
The assertion about $\Delta\left(C(2,m)\right)$ follows from Lemma \ref{lem:maxcdvectors}, 
Lemma \ref{lem:atmostcdvectors}, and Corollary \ref{cornogin}. Further, by Lemma \ref{lem:atmostcdvectors}, we see that if $\E$ is any  subspace of $\bigwedge^2V$ of codimension $k-\mu-1$, that is, of dimension $\mu + 1$, then 
$$
g(\E) \le \frac{ \left(q^{\mu}-1\right)+q^2\left(q-1\right)}{q-1} = 1 + q + q^ 2 +\cdots + q^{\mu-1} + q^2,
$$
and by Lemma \ref{lem:maxcdvectors}, we see that the 
bound is attained for some subspace of codimension $k-\mu-1$. This proves that $g_{k-\mu-1}(2,m) = 1 + q + q^ 2 +\cdots + q^{\mu-1} + q^2$. 
\end{proof}

\begin{theorem}
\label{mainthm}
For the Grassmann code $C(2,m)$, we have
$$
d_{k-\mu-1}\left(C(2,m)\right) = n-(1+q+\cdots+q^{\mu-1}+q^2).
$$
and
$$
d_{\mu + 1}\left(C(2,m)\right) =  q^{\d} + q^{\d - 1}+ 2q^{\d - 2}+ q^{\d - 3} +\dots +   q^{\d - \mu+1} 
$$
\end{theorem}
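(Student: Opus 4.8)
The plan is to handle the two displayed formulas separately. For the first, I would simply assemble results already in hand: Corollary~\ref{dsNG} gives $d_{k-\mu-1}(C(2,m)) = n - g_{k-\mu-1}(2,m)$, and Corollary~\ref{cor:delta2mg2m} evaluates $g_{k-\mu-1}(2,m) = 1 + q + q^2 + \cdots + q^{\mu-1} + q^2$. Substituting yields the asserted value of $d_{k-\mu-1}$ with no further work.

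For the second formula, the strategy is to pin $d_{\mu+1}(C)$ between the generalized Griesmer--Wei bound of Theorem~\ref{thm:eC} and the norm of one explicit subspace, and then show the two coincide. Writing $C := C(2,m)$, I would first record its relevant invariants: by Corollary~\ref{cornoginde} we have $d(C) = q^{\delta}$ and $e(C) = q^{\delta} + q^{\delta-2}$, while Corollary~\ref{cor:delta2mg2m} gives that the maximum number of minimum-weight codewords in a $(\mu+1)$-dimensional subspace is $\Delta_{\mu+1}(C) = (q^{\mu}-1)+q^2(q-1)$. Feeding these into Theorem~\ref{thm:eC} with $r=\mu+1$ produces the lower bound
\begin{equation*}
d_{\mu+1}(C) \ge \frac{q^{\delta}\,\Delta_{\mu+1}(C) + (q^{\delta}+q^{\delta-2})\bigl(q^{\mu+1}-1-\Delta_{\mu+1}(C)\bigr)}{q^{\mu+1}-q^{\mu}}.
\end{equation*}

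For the reverse inequality I would exhibit an optimal subspace. Take the $(\mu+1)$-dimensional $E_1 \subset \bigwedge^2 V$ of Lemma~\ref{lem:maxcdvectors} and set $D := \tau(\hodge(E_1))$, a $(\mu+1)$-dimensional subspace of $C$. By Lemma~\ref{lem:maxcdvectors}, $E_1$ has exactly $(q^{\mu}-1)+q^2(q-1)$ decomposable vectors and exactly $(q^{\mu}-q^2)(q-1)$ vectors of rank $4$. Under the correspondence $\omega' \mapsto \omega_c = \hodge(\omega')$, Corollary~\ref{cornogin} sends the decomposable vectors to codewords of weight $q^{\delta}=d(C)$; for the rank-$4$ vectors, Lemma~\ref{lem:compat} shows $\rank$ is preserved by $\hodge$, so Proposition~\ref{spectrumc2m} assigns them weight $q^{\delta}+q^{\delta-2}=e(C)$. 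Since $(q^{\mu}-q^2)(q-1)=q^{\mu+1}-1-\Delta_{\mu+1}(C)$, computing $\Vert D\Vert$ via Lemma~\ref{norm_formula} reproduces the right-hand side of the Griesmer--Wei bound exactly; hence the lower bound is tight and $d_{\mu+1}(C)$ equals that expression.

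The last step is to reduce the fraction to the stated closed form. Writing $\Delta_{\mu+1}(C)=q^{\mu}-1+q^3-q^2$ and clearing denominators, the numerator collapses to $q^{\delta+1}\bigl(q^{\mu}+q^{\mu-2}-q^{\mu-3}-1\bigr)$; factoring $q^{\mu}-1=(q-1)(q^{\mu-1}+\cdots+1)$ and $q^{\mu-2}-q^{\mu-3}=q^{\mu-3}(q-1)$ and dividing by $q^{\mu}(q-1)$ gives $q^{\delta}+q^{\delta-1}+\cdots+q^{\delta-\mu+1}+q^{\delta-2}$, which is precisely $q^{\delta}+q^{\delta-1}+2q^{\delta-2}+q^{\delta-3}+\cdots+q^{\delta-\mu+1}$; this also confirms $d_{\mu+1}=d_{\mu}+q^{\delta-2}$ as in \eqref{newdr}. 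I expect the main obstacle to be the tightness verification, namely confirming that every non-minimum-weight codeword of $D$ sits at exactly the second weight $e(C)$: this is where rank-preservation under $\hodge$ (Lemma~\ref{lem:compat}) and the explicit spectrum (Proposition~\ref{spectrumc2m}) must mesh with the structural count in Lemma~\ref{lem:maxcdvectors}. The closed-form simplification, by contrast, is entirely routine.
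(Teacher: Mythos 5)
Your proposal is correct and follows essentially the same route as the paper: the first formula by combining Corollary~\ref{dsNG} with Corollary~\ref{cor:delta2mg2m}, and the second by sandwiching $d_{\mu+1}$ between the generalized Griesmer--Wei bound of Theorem~\ref{thm:eC} (fed with Corollaries~\ref{cornoginde} and~\ref{cor:delta2mg2m}) and the norm of $\tau(\hodge(E_1))$ for the extremal subspace $E_1$ of Lemma~\ref{lem:maxcdvectors}, computed via Proposition~\ref{spectrumc2m} and Lemma~\ref{norm_formula}. Your closed-form simplification of the resulting fraction is also correct and matches the paper's value $d_\mu + q^{\delta-2}$.
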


\begin{proof}
The formula for $d_{k-\mu-1}\left(C(2,m)\right)$ is an immediate consequence of Corollary \ref{cor:delta2mg2m} and Corollary \ref{dsNG}. To prove the formula for $d_{\mu + 1}\left(C(2,m)\right)$, we use Corollary \ref{cor:delta2mg2m} and Corollary \ref{cornoginde} to observe that for $C(2,m)$, the generalized Griesmer-Wei bound in 
Theorem \ref{thm:eC} can be written as 
$$
d_{\mu + 1}\left(C(2,m)\right) \ge q^{\d} + q^{\d - 1}+ 2q^{\d - 2}+ q^{\d - 3} +\dots +   q^{\d - \mu+1} .
$$
Moreover, by Lemma \ref{lem:maxcdvectors}, there exists a $(\mu+1)$-dimensional subspace, say $E_1$, of $\bigwedge^2V$ containing $ \left(q^{\mu}-1\right)+q^2\left(q-1\right)$ decomposable elements such that the remaining $\left(q^{\mu}-q^2\right)\left(q-1\right)$ nonzero elements are of rank $4$.  Thus, in view of 
Proposition \ref{spectrumc2m}, we see that 
$D_1:=\tau(\hodge(E_1))$ is a $(\mu+1)$-dimensional subspace of $C(2,m)$ in which 
$ \left(q^{\mu}-1\right)+q^2\left(q-1\right)$ elements are of weight $q^{\delta}$ while the remaining $\left(q^{\mu}-q^2\right)\left(q-1\right)$ nonzero elements are of weight $q^{\delta} + q^{\delta - 2}$. Consequently, by Lemma \ref{norm_formula}, we have 
\begin{eqnarray*}
\Vert D_1 \Vert &= & \frac{1}{q^{\mu+1} - q^{\mu}} \sum_{c\in D} \Vert c \Vert \\
& = & 
 \frac{q^{\delta} \left[\left(q^{\mu}-1\right)+q^2\left(q-1\right)\right]}{q^{\mu+1} - q^{\mu}}
  + \frac{\left(q^{\delta}+ q^{\delta - 2}\right)\left[\left(q^{\mu}-q^2\right)\left(q-1\right)\right]}{q^{\mu+1} - q^{\mu}}  \\ 
& = &   
q^{\delta -\mu}\left(q^{\mu} + q^{\mu-1}+ \cdots + q + 1\right) + q^{\delta - 2} - q^{\delta -\mu}.
\end{eqnarray*}
This proves that $d_{\mu + 1}\left(C(2,m)\right) = q^{\d} + q^{\d - 1}+ 2q^{\d - 2}+ q^{\d - 3} +\dots +   q^{\d - \mu+1}$.
\end{proof}

\begin{remark}
It appears quite plausible that the new pattern which emerges with $d_{\mu+1}\left(C(2,m)\right)$ continues for the next several values of $d_r\left(C(2,m)\right)$. More precisely, we conjecture that for 
$\mu + 1 < r \le 2\mu - 3$, one has
$$
d_r\left(C(2,m)\right) = \left(q^{\d} + q^{\d - 1}+ \cdots +   q^{\d - \mu+1}\right)
+ \left(q^{\delta-2}+q^{\delta-3}+\cdots+q^{\delta-r+\mu-1}\right)
$$
and 
\begin{equation*}
d_{k-r}\left(C(2,m)\right) =
 n- \left(1+q+\cdots+q^{\mu-1}\right) - \left(q^2+q^3+\cdots+q^{r-\mu+1}\right)
\end{equation*}
These conjectural formulae yield the complete weight hierarchy of $C(2,6)$. In general, we believe that the conjecture of Hansen, Johnsen and Ranestad \cite{HJR} about 
$d_r\left(C(\ell,m)\right) - d_{r-1}\left(C(\ell,m)\right)$ being a power of $q$ is likely 
to be true and that determining $d_r\left(C(\ell,m)\right)$ from $d_{r-1}\left(C(\ell,m)\right)$ is a matter of deciphering the correct term of the Gaussian binomial coefficient (which can be written as a finite sum of powers of $q$) 
that gets added to  $d_{r-1}\left(C(\ell,m)\right)$. 
\end{remark}

\end{document}